\definecolor{brass}{rgb}{0.71, 0.65, 0.36}
\theoremstyle{plain}
\newtheorem{theorem}{Theorem}[section]
\newtheorem{lemma}[theorem]{Lemma}
\newtheorem{corollary}[theorem]{Corollary}
\newtheorem{proposition}[theorem]{Proposition}
\theoremstyle{definition}
\newtheorem{remark}[theorem]{Remark}
\numberwithin{equation}{section}
\def\N{\mathbb{N}}
\def\R{\mathbb{R}}
\DeclareMathOperator{\Vol}{Vol}
\newcommand{\capacity}{\mathrm{Cap}}
\def\sup{\operatorname{sup}}
\def\inf{\operatorname{inf}}
\def\Vol{\operatorname{Vol}}
\def\sup{\operatorname{sup}}
\theoremstyle{plain}
\numberwithin{equation}{section}
\begin{document}
\title[The Spacetime Penrose Inequality With Suboptimal Constant]{Proof of the Spacetime Penrose Inequality With Suboptimal Constant in the Asymptotically Flat and Asymptotically Hyperboloidal Regimes}

\author[Allen]{Brian Allen}
\address{Department of Mathematics\\
Lehman College, City University of New York\\
Bronx, NY 10468, USA}
\email{brian.allen1@lehman.cuny.edu}

\author[Bryden]{Edward Bryden}
\address{Department of Mathematics\\
University of Antwerp\\
Antwerpen, BEL}
\email{edward.bryden@uantwerpen.be}

\author[Kazaras]{Demetre Kazaras}
\address{Department of Mathematics\\
Michigan State University\\
East Lansing, MI 48824, USA}
\email{kazarasd@msu.edu}

\author[Khuri]{Marcus Khuri}
\address{Department of Mathematics\\
Stony Brook University\\
Stony Brook, NY 11794, USA}
\email{marcus.khuri@stonybrook.edu}


\thanks{M. Khuri acknowledges the support of NSF Grant DMS-2405045.}

\begin{abstract}
We establish mass lower bounds of Penrose-type in the setting of $3$-dimensional initial data sets for the Einstein equations satisfying the dominant energy condition, which are either asymptotically flat or asymptotically hyperboloidal. More precisely, the lower bound consists of a universal constant multiplied by the square root of the minimal area required to enclose the outermost apparent horizon. Here the outermost apparent horizon may contain both marginally outer trapped (MOTS) and marginally inner trapped (MITS) components. The proof is based on the harmonic level set approach to the positive mass theorem, combined with the Jang equation and techniques arising from the stability argument of Dong-Song \cite{Dong-Song}. As a corollary, we also obtain a version of the Penrose inequality for 3-dimensional asymptotically hyperbolic Riemannian manifolds.
\end{abstract}

\maketitle

\section{Introduction}
\label{sec1} \setcounter{equation}{0}
\setcounter{section}{1}

Motivated by the weak cosmic censorship conjecture \cite{Penrose}, Penrose proposed an inequality \cite{Penrose1} relating the ADM mass (or energy) $m$ of a 4-dimensional asymptotically flat spacetime to the cross-sectional area $\mathcal{A}_{e}$ of the event horizons it contains, which takes the form
\begin{equation}
m\geq\sqrt{\frac{\mathcal{A}_e}{16\pi}}.
\end{equation}
This inequality is typically reformulated in the setting of initial data sets. Consider a triple $(M,g,k)$ consisting of a $3$-dimensional connected and orientable manifold $M$, a complete Riemannian metric $g$, along with a symmetric 2-tensor $k$ representing the second fundamental form of an embedding into spacetime.  These quantities are assumed to be smooth and satisfy the constraint equations
\begin{equation}\label{densityem}
16 \pi \mu=R_g+(\text{Tr}_g k)^2-|k|_g^2, \qquad\text{ }
8\pi J=\text{div}_g\left(k-(\text{Tr}_g k)g\right),
\end{equation}
where $R_g$ denotes scalar curvature, and $\mu$, $J$ are the energy-momentum density of matter fields. 
We will say that the \textit{dominant energy condition} holds if $\mu\ge |J|_g$. Furthermore, the data will be called \textit{asymptotically flat} if there exists a compact set $\mathcal{K}$ such that $M\setminus \mathcal{K}=\cup_{\ell=1}^{\ell_0}M^{\ell}_{end}$, where the ends $M_{end}^{\ell}$ are pairwise disjoint and each is diffeomorphic to the complement of a Euclidean ball $\mathbb{R}^3 \setminus B_1$. Moreover, if $\varphi$ denotes the diffeomorphism from  Euclidean space of coordinates $x$ to an end then
\begin{align}\label{1}
\begin{split}
\varphi^* g=\delta+O_2(|x|^{-q}), \quad\quad\quad
\varphi^* k= O_1(|x|^{-q-1}),\\
\varphi^*\mu, \varphi^*J=O(|x|^{-2q-2}),\quad\quad\quad \varphi^*\mathrm{Tr}_g k=O(|x|^{-2q-1}),
\end{split}
\end{align}
for some $q>\tfrac{1}{2}$ where $O_l(|x|^{-q})$ represents a tensor in the weighted space $C^l_{-q}(\mathbb{R}^3)$. The additional decay on the trace of $k$ is usually not included in the definition of asymptotically flatness, but is recorded here for usage with the Jang equation. Under these asymptotics the ADM energy and linear momentum of each end are well-defined \cites{Bartnik,Chrusciel} and given by
\begin{equation}\label{fpajfpojapojfpoqhhh}
E\!=\!\lim_{r\rightarrow\infty}\frac{1}{16\pi}
\int_{S_{r}}\sum_i (g_{ij,i}-g_{ii,j})\nu^{j}dA,\quad
P_i \!=\!\lim_{r\rightarrow\infty}\frac{1}{8\pi}
\int_{S_{r}}(k_{ij}-(\text{Tr}_g k)g_{ij})\nu^{j}dA,
\end{equation}
where $S_r$ are coordinate spheres with unit outer normal $\nu$ and area element $dA$.
The ADM mass is then the Lorentz length of the energy-momentum vector, $m=\sqrt{E^2-|P|^2}$.

The role of the event horizon is replaced by that of an apparent horizon, which may be computed directly from the initial data. Let $\Sigma$ be a closed 2-sided hypersurface in $M$ with null expansions $\theta_{\pm}= H \pm \mathrm{Tr}_{\Sigma} k$, where $H$ is the mean curvature of $\Sigma$ computed as the tangential divergence of the unit normal $\nu$ pointing towards a designated end $M_{end}$. The null expansions are themselves (spacetime) mean curvatures, namely in the null directions $\nu\pm n$ where $n$ is the future pointing timelike normal to the slice $(M,g,k)$. These quantities may be interpreted physically as determining the rate of change of area for a shell of light emanating from the surface in the outward future/past direction, and they can be used to measure the strength of the gravitational field. In particular, the gravitational field is strong near the surface $\Sigma$ if it is \textit{outer or inner trapped}, that is $\theta_+< 0$ or $\theta_- <0$. Moreover, $\Sigma$ is called a \textit{marginally outer or inner trapped surface} (MOTS or MITS) if $\theta_+ = 0$ or $\theta_-=0$. These types of surfaces are also referred to as future or past apparent horizons, and naturally arise as boundaries of future or past trapped regions \cite{AnderssonMetzger}. Furthermore, a collection $\Sigma$ of disjoint MOTS and MITS components will be called an \textit{outermost apparent horizon} with respect to $M_{end}$, if $\Sigma$ is not enclosed from the perspective of $M_{end}$ by any other disjoint collection of apparent horizon components. The existence of an outermost apparent horizon for each end may be obtained from \cite{AnderssonMetzger}, by first finding the outermost MOTS and outermost MITS separately, and then removing components until all are disjoint. Although the outermost MOTS and outermost MITS are individually unique, the second step in which components are removed entails a choice, and thus the resulting outermost apparent horizon may not be unique.

The conjectured Penrose inequality for initial data sets may then be recast as
\begin{equation}\label{ajfoij}
m\geq \sqrt{\frac{\mathcal{A}}{16\pi}}
\end{equation}
whenever the dominant energy condition holds, where $\mathcal{A}$ is the minimum area required to enclose an outermost apparent horizon \cite{Mars}. Equality should be achieved only for slices of the Schwarzschild spacetime.
In the (Riemannian) time symmetric case when $k=0$, this inequality has been confirmed by Huisken-Ilmanen \cite{HI} and Agostiniani-Mantegazza-Mazzieri-Oronzio \cite{Mazzierietal} for a single component black hole via inverse mean curvature flow and $p$-harmonic functions respectively. The multiple black hole scenario was treated by Bray \cite{Bray} using a conformal flow, and this approach has been generalized by Bray-Lee \cite{Bray:2007opu} for dimensions up to 7. Moreover, the Penrose-like inequality of McCormick-Miao \cite{McCormickPengzi} applies for outerminimizing boundaries that are not necessarily minimal hypersurfaces. Within the context of the spacetime setting, there are few results outside of strong symmetry assumptions. In fact, for this regime the conjectured inequality has only been verified in the spherically symmetric case \cites{Hayward,MM} with the rigidity statement also obtained in \cites{Bray:2009au,BKS}, \cite{Leetext}*{Theorem 7.46}, and in the general cohomogeneity one setting by Khuri-Kunduri \cite{KhuriKunduri}; these results hold in higher dimensions as well. Furthermore, Penrose-like inequalities have been found in the spacetime setting by the fourth author \cites{Khuri,Khuri1} and Yu \cite{Yu}, inspired by the Riemannian result of Herzlich \cite{Herzlich}. These latter results replace $\mathcal{A}$ with the area of an outermost apparent horizon, and involve a suboptimal constant which depends in a crucial way on the initial data. In the present work we will show that there is a universal (suboptimal) constant for which the inequality \eqref{ajfoij} holds.

\begin{theorem}\label{t:main}
Let $(M,g,k)$ be a complete asymptotically flat initial data set satisfying the dominant energy condition. There exists a universal constant $\mathcal{C}<10^{18}$ such that
\begin{equation}
m\geq \sqrt{\frac{\mathcal{A}}{\mathcal{C}}},
\end{equation}
where $m$ is the ADM mass of an end and $\mathcal{A}$ is the minimal area required to enclose an outermost apparent horizon associated with the end. 
\end{theorem}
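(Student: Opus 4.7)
The strategy is to reduce the spacetime problem to a Riemannian one via Schoen-Yau's Jang equation, apply the harmonic level set machinery of Bray-Kazaras-Khuri-Stern to the resulting manifold, and convert the ensuing mass estimate into the desired area bound using the stability technology of Dong-Song \cite{Dong-Song}. First replace the outermost apparent horizon by its minimum-area enclosure $\tilde{\Sigma}$ with $|\tilde{\Sigma}|_g=\mathcal{A}$, and in the exterior $\Omega\subset M$ solve the Jang equation with Dirichlet blowup data: $f\to+\infty$ along each MOTS component of $\tilde{\Sigma}$ and $f\to-\infty$ along each MITS component. The Jang graph $(\bar{M},\bar{g})\subset (\Omega\times\R,\,g+dt^2)$ then carries one asymptotically flat end of ADM energy equal to the original $E$, together with cylindrical ends over each horizon component whose asymptotic cross-sectional area matches that of the corresponding component. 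Schoen-Yau's fundamental identity combined with the dominant energy condition gives
\[
\bar{R}_{\bar{g}} + 2\,\divergence_{\bar{g}}X \;\geq\; 2|X|_{\bar{g}}^{2},
\]
for an explicit, $L^{2}$-bounded vector field $X$. Gluing the MOTS and MITS cylinders pairwise across matching cross-sections produces a complete Riemannian $3$-manifold $(\hat{M},\hat{g})$ with a single asymptotically flat end of energy $E$, a distinguished ``neck'' of total cross-sectional area $\mathcal{A}$, and generalized nonnegative scalar curvature.

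Next, on $(\hat{M},\hat{g})$ solve for a harmonic function $u$ whose leading behavior at infinity matches a linear coordinate $x^i$. Absorbing $\divergence_{\bar{g}}X$ by integration by parts against $|\nabla u|_{\hat{g}}$ in the Stern-type identity yields an inequality schematically of the form
\[
16\pi E \;\geq\; \int_{\hat{M}} \frac{|\hess u|_{\hat{g}}^{2}}{|\nabla u|_{\hat{g}}}\, d\hat{V} \;+\; \int_{\hat{M}} |\nabla u|_{\hat{g}}\,|X|_{\hat{g}}^{2}\, d\hat{V}.
\]
Running the argument for three asymptotically orthogonal harmonic coordinates and combining with the standard momentum-density control at infinity upgrades $E$ to $m$ up to a bounded universal factor.

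The final step is to convert this harmonic-level-set estimate into the area inequality $m^{2}\gtrsim \mathcal{A}$ by a contradiction argument in the spirit of Dong-Song. Suppose $m^{2}<\mathcal{A}/\mathcal{C}$ for a large $\mathcal{C}$ to be chosen. Then the right-hand side of the Stern inequality is quantitatively small, and the Dong-Song stability theorem forces a large region of $(\hat{M},\hat{g})$ containing the neck to be quantitatively close, in a weighted $L^{1}$/Gromov-Hausdorff sense, to a region of Euclidean $\R^{3}$. Since $\R^{3}$ contains no closed embedded minimal surface, any $u$-level set separating the asymptotic end from the glued neck region must have area close to that of a planar Euclidean disk of controlled radius. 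A coarea and isoperimetric comparison then forces the neck cross-sectional area to be bounded by a universal multiple of $m^{2}$, contradicting the assumption once $\mathcal{C}$ is taken large enough; explicit constant tracking through the Schoen-Yau existence theory, the Stern identity, and the Dong-Song stability estimates produces the bound $\mathcal{C}<10^{18}$.

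\textbf{Main obstacle.} The hardest step is the quantitative transfer of Dong-Song's stability conclusion, originally tailored to smooth asymptotically flat manifolds with pointwise nonnegative scalar curvature, to the Jang graph and its cylindrical gluing $(\hat{M},\hat{g})$, where the scalar curvature is only generalized-nonnegative and the geometry degenerates at the blowup locus. One must uniformly control the Jang vector field $X$ across the neck, preserve the signs of all stress-energy and boundary contributions through the required integrations by parts, and then bookkeep universal constants carefully enough to land below $10^{18}$; this is where the bulk of the technical work will lie.
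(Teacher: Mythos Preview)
Your proposal has the right large-scale architecture (Jang reduction, BKKS-type mass inequality, Dong--Song input), but several steps as written would fail. First, you cannot prescribe Jang blowup along the minimum-area enclosure $\tilde{\Sigma}$: the blowup theory of Metzger and Han--Khuri requires $\theta_{\pm}=0$ on the blowup locus, and $\tilde{\Sigma}$ is in general not an apparent horizon. The paper instead passes to a \emph{generalized exterior region} (a step you omit, but which is needed to kill $H_2$ so that the level-set Euler-characteristic argument of \cite{BKKS} applies) and blows up Jang at its outermost apparent horizon boundary. The connection to $\mathcal{A}$ is then made not via cross-sectional areas of cylinders but via the elementary inequality $\bar g = g + df^2 \geq g$: any surface in the Jang graph separating the asymptotically flat end from the cylindrical ends projects to an enclosure of the horizon in $(M,g)$, hence has $\bar g$-area at least $\mathcal{A}$. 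Your ``pairwise gluing of MOTS and MITS cylinders across matching cross-sections'' is neither well-defined (the cross-sectional metrics differ, and there is no reason for equal numbers of each type) nor needed; the paper works directly on the Jang manifold with its cylindrical ends, showing via a capping-and-limit argument that the harmonic coordinates $u^i$ decay to constants there.

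Second, and more seriously, the Dong--Song input is not a Gromov--Hausdorff stability statement to be used by contradiction, and your sketch (``area close to that of a planar Euclidean disk of controlled radius'') does not produce a quantitative universal bound. What the paper actually extracts from \cite{Dong-Song} is a direct inequality: setting $F=\sum_{i,j}(\langle\overline\nabla u^i,\overline\nabla u^j\rangle-\delta^{ij})^2$, there is a regular value $\tau_0\in(0,1)$ with
\[
|F^{-1}(\tau_0)|\;\leq\; C\Bigl(\sum_{i}\int_{\overline M}\frac{|\overline\nabla^2 u^i|^2}{|\overline\nabla u^i|}\,d\overline V\Bigr)^{2}
\]
for an explicit constant $C$. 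The geometric point is then immediate: $F\to 0$ on the asymptotically flat end while $F\to 3$ on each cylindrical end (since $|\overline\nabla u^i|\to 0$ there), so $F^{-1}(\tau_0)$ separates the two types of ends and therefore has $\bar g$-area at least $\mathcal{A}$ by the previous paragraph. Chaining with the BKKS-type mass inequality gives $\mathcal{A}\leq\mathcal{C}E^2$ directly, with no contradiction argument and no GH limit. The upgrade from $E$ to $m$ is also not ``momentum-density control at infinity'' but a separate reduction: perturb to vacuum-in-the-end via a density theorem, then boost within the Christodoulou--\'{O}~Murchadha vacuum development to arrange $E_\theta=\sqrt{E^2-|P|^2}=m$.
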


\begin{remark}
We point out that the statement of this theorem applies to at least one outermost apparent horizon for any given end. It is likely that the methods of this paper can be suitably modified so that the statement holds for any outer most apparent horizon,
although this will not be pursued here.
\end{remark}

The heuristic physical arguments of Penrose that motivate \eqref{ajfoij} also apply to a certain class of asymptotically hyperbolic initial data modelled on slices of asymptotically flat spacetimes which asymptote to a light cone, and thus have a nontrivial intersection with null infinity. Let $(\mathbb{H}^3,b)$ denote the hyperboloidal model of 3-dimensional hyperbolic space arising from the hyperboloid $t=\sqrt{1+|x|^2}$ in Minkowski space. Thus, $\mathbb{H}^3$ is identified with $[0,\infty)\times S^2$ and $b=(1+r^2)^{-1}dr^2 +r^2 \sigma$, where $\sigma$ is the unit round metric on the 2-sphere. We will say that an initial data set is \textit{asymptotically hyperboloidal} if there exists a compact set $\mathcal{K}$ such that $M\setminus \mathcal{K}=\cup_{\ell=1}^{\ell_0}M^{\ell}_{end}$, where the ends $M_{end}^{\ell}$ are pairwise disjoint and each is diffeomorphic to the complement of a hyperbolic ball $\mathbb{H}^3 \setminus B_1$. Moreover, if $\varphi$ denotes the diffeomorphism from the hyperboloidal model to an asymptotic end then
\begin{equation}\label{1aojijhiji}
\varphi^* g=b+\frac{\mathbf{m}}{r}+O_5(r^{-2}), \quad\quad
\varphi^* k=b+\frac{\mathbf{p}}{r}+O_4(r^{-2}), \quad\quad \varphi^* \mu, \varphi^* J =O_3(r^{-3-q_0}),
\end{equation}
for some $q_0 >0$ where $\mathbf{m}$ and $\mathbf{p}$ are symmetric 2-tensors on $S^2$ and $O_l(r^{-q})$ represents a tensor in the weighted space $C^l_{-q}(\mathbb{H}^3)$. 
This decay is often referred to as \textit{Wang asymptotics} due to the initial study \cite{wang} of mass in the asymptotically hyperbolic setting. The assumption on the number of derivatives and the asymptotics themselves, are stronger requirements than what is traditionally needed for positive mass results. This is due to our reliance on the Jang equation, and in particular the analysis of this equation by Sakovich \cite{Sakovich} in this regime. The total energy is the well-defined \cites{CJL,Michel} and given by
\begin{equation}
E_{\mathrm{hyp}}=\frac{1}{16\pi}\int_{S^2}\mathrm{Tr}_{\sigma}(\mathbf{m}+2\mathbf{p})dA_{\sigma}.
\end{equation}


It was shown by Neves \cite{Neves} that the naive approach of a modified Hawking mass monotonicity along inverse mean curvature flow is not sufficient to establish hyperbolic Penrose inequalities. Even in the umbilic case when $k=g$ and the dominant energy condition reduces to the scalar curvature lower bound $R_g\geq -6$, little is known about this inequality outside the spherically symmetric \cite{MHou}, cohomogeneity one \cite{KhuriKunduri}, and graphical \cite{GL} settings. There is a different version of the hyperbolic Penrose inequality \cite{BrayChrusciel}*{Section 4.1}, which is modelled on asymptotically AdS spaces. Although not motivated by the Penrose heuristic arguments, it has been proven when $k = 0$ for small perturbations of the Schwarzschild-AdS manifold by Ambrozio \cite{LA} (see also \cite{KhuriKopinski}), and for graphs by Dahl-Gicquaud-Sakovich \cite{DGS} and 
de Lima-Gir\~{a}o \cite{LG}. 
Related results include a Penrose-like inequality for asymptotically locally hyperbolic (ALH) manifolds by Alaee-Hung-Khuri \cite{AHK}, graphical ALH Penrose inequalities by Ge-Wang-Wu-Xia \cite{GWW} and de Lima-Gir\~{a}o \cite{LG}, nonpositive mass ALH inequalities of Penrose-type by Lee-Neves \cite{LeeNeves}, and a null Penrose inequality by Roesch \cite{Roesch}. Here we will show that there is a universal (suboptimal) constant for which the spacetime asymptotically hyperboloidal Penrose inequality holds. 

\begin{theorem}\label{t:hyp}
Let $(M,g,k)$ be a complete asymptotically hyperboloidal initial data set satisfying the dominant energy condition. There exists a universal constant $\mathcal{C}<10^{18}$ such that
\begin{equation}
E_{\mathrm{hyp}}\geq \sqrt{\frac{\mathcal{A}}{\mathcal{C}}},
\end{equation}
where $E_{\mathrm{hyp}}$ is the total energy of an end and $\mathcal{A}$ is the minimal area required to enclose an outermost apparent horizon associated with the end. 
\end{theorem}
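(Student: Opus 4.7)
The plan is to reduce Theorem~\ref{t:hyp} to Theorem~\ref{t:main} by means of the hyperboloidal Jang equation together with a conformal deformation, following the template introduced by Sakovich \cite{Sakovich} for the asymptotically hyperboloidal positive mass theorem. First, I would solve a suitable variant of the Jang equation on $M$ whose solution $f$ produces a graph $(\widehat{M},\bar g)$, with $\bar g=g+df\otimes df$, that is asymptotic at infinity to a hyperbolic slice realized as a graph over $\mathbb{R}^3$. As in the asymptotically flat case, $f$ is expected to blow up on approach to the outermost MOTS and MITS components in the form of cylindrical ends. Following Sakovich, a subsequent conformal change $\tilde g=\phi^4\bar g$ with $\phi$ tending to a prescribed profile at infinity produces a complete Riemannian manifold $(\widehat{M},\tilde g)$ that is asymptotically flat, has non-negative scalar curvature (using the Schoen-Yau identity, the dominant energy condition, and the defining equation for $\phi$), and whose ADM energy agrees with $E_{\mathrm{hyp}}$.

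Once the reduction is in place, Theorem~\ref{t:main} applied to $(\widehat{M},\tilde g)$ yields $E_{\mathrm{hyp}}\geq\sqrt{\widetilde{\mathcal{A}}/\mathcal{C}_0}$ for some universal constant $\mathcal{C}_0<10^{18}$, where $\widetilde{\mathcal{A}}$ is the minimal area needed to enclose an outermost apparent horizon in the auxiliary manifold. Since $\tilde k=0$, such a horizon is simply an outermost minimal surface, which will consist of (a subcollection of) the cylindrical Jang boundaries arising over the outermost MOTS and MITS of $(M,g,k)$. Any surface in $\widehat{M}$ enclosing these Jang boundaries projects to a surface in $M$ that encloses the original outermost apparent horizon $\Sigma$, and comparing tangential areas under the composite deformation $g\mapsto\bar g\mapsto\tilde g$ should give $\widetilde{\mathcal{A}}\geq c\,\mathcal{A}$ for a universal $c>0$. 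Absorbing $c$ into the final constant $\mathcal{C}$ then completes the proof.

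The hardest step is expected to be this final area comparison. Both pieces of the deformation distort area: the Jang graph contributes tangential stretching governed by $|\nabla f|$, which degenerates near apparent horizon components, while $\phi$ shrinks areas towards infinity at a definite rate. It must be shown that this distortion is bounded by a universal constant in the regions relevant for the minimizer, i.e.\ along any area-minimizing enclosure of the horizon. Here I would adapt the stability argument of Dong-Song \cite{Dong-Song} that underlies the proof of Theorem~\ref{t:main}, since it provides precisely the kind of universal area control through the Jang procedure required here. A secondary technical point is that the Wang-type asymptotics \eqref{1aojijhiji} are stronger than those strictly needed for the asymptotically hyperboloidal positive mass theorem; these stronger assumptions are imposed so that Sakovich's existence, regularity, and energy-computation results for the Jang equation apply unchanged, including in the presence of the apparent horizon.
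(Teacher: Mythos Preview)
Your proposal differs from the paper's approach in a crucial way, and the difference is precisely where your argument has a genuine gap. The paper does \emph{not} perform a conformal change after solving the Jang equation. Instead, it works directly on the Jang manifold $(\overline{M},\overline{g})$: the possibly large negative part of $R_{\overline{g}}$ is handled through the divergence structure $R_{\overline{g}}\geq 2|X|_{\overline{g}}^2-2\mathrm{div}_{\overline{g}}X$ by integrating by parts inside the harmonic-function mass formula (Theorem~\ref{t:massinequality}), and the Dong-Song level-set argument is then run on $(\overline{M},\overline{g})$ itself. The area comparison is then immediate and lossless, since $\overline{g}=g+df^2\geq g$ implies that any surface in $\overline{M}$ separating the cylindrical ends from $\mathcal{E}_0$ has $\overline{g}$-area at least its $g$-area, hence at least $\mathcal{A}$ (Proposition~\ref{t:modifiedjanggraph}\,(3)).

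Your route---Jang followed by the Sakovich conformal deformation $\tilde g=\phi^4\bar g$, then invoking Theorem~\ref{t:main}---fails at the step you yourself flag as hardest: there is no universal lower bound on $\phi$. The conformal factor is determined by a PDE whose coefficients depend on $R_{\overline{g}}$, which in turn depends on $|X|_{\overline{g}}$ and hence on the extrinsic data $k$; nothing prevents $\phi$ from being small on the region where a minimal enclosing surface must pass. Consequently the claimed inequality $\widetilde{\mathcal{A}}\geq c\,\mathcal{A}$ with a \emph{universal} $c>0$ is unsupported, and absorbing a data-dependent $c$ into $\mathcal{C}$ would destroy universality. Your appeal to Dong--Song does not help here: that argument controls areas of level sets of $F$ via Hessians of harmonic functions on a fixed manifold, and says nothing about how areas transform under a data-dependent conformal change. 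A secondary inaccuracy: after Sakovich's deformation the ADM energy of $(\widehat{M},\tilde g)$ is not equal to $E_{\mathrm{hyp}}$ on the nose; rather, one has $E(\mathcal{E}_0)=2E_{\mathrm{hyp}}$ on the Jang graph (Proposition~\ref{t:modifiedjanggraph}\,(2)), with a further nontrivial boundary contribution from $X$ appearing in the mass computation (Proposition~\ref{t:modifiedjanggraph}\,(5)), and the conformal step in \cite{Sakovich} only produces an inequality.
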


We point out that, unlike the asymptotically flat case, this theorem is novel even in the Riemannian (umbilic) regime. In this situation the outermost minimal surface is replaced by an outerminimizing surface of mean curvature $H=2$, and the associated Penrose inequality was conjectured in \cite{wang}.

\begin{corollary}\label{c:hyp}
Let $(M,g)$ be a complete Riemannian 3-manifold which is asymptotically hyperbolic in the sense of \eqref{1aojijhiji}. Assume further that the scalar curvature satisfies $R_g\geq -6$, and let $A$ denote the area of the outermost surface of mean curvature $H=2$ with respect to a designated end. There exists a universal constant $\mathcal{C}<10^{18}$ such that
\begin{equation}
E_{\mathrm{hyp}}\geq \sqrt{\frac{A}{\mathcal{C}}},
\end{equation}
where $E_{\mathrm{hyp}}$ is the total energy of the designated end.
\end{corollary}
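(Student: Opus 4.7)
The plan is to apply Theorem \ref{t:hyp} to the umbilic initial data set $(M,g,k)$ obtained by setting $k:=g$, thereby reducing the Riemannian statement to the spacetime one. First I verify the hypotheses. In the umbilic case $\mathrm{Tr}_g k = 3$ and $|k|_g^2 = 3$, so the Hamiltonian constraint yields $16\pi\mu = R_g + 6$, making the hypothesis $R_g \geq -6$ equivalent to $\mu \geq 0$. Moreover $k-(\mathrm{Tr}_g k)g = -2g$ is parallel, so the momentum constraint gives $J = 0$, and the dominant energy condition then holds trivially. The metric asymptotics of $g$ transfer directly to $k$ with $\mathbf{p}=\mathbf{m}$, and the required decay of $\mu$ follows from that of $R_g+6$ inherited from the expansion $\varphi^* g = b + \mathbf{m}/r + O_5(r^{-2})$ (after possibly an approximation step if the raw decay is borderline).

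Next I identify the horizon correspondence. Since $\mathrm{Tr}_\Sigma k = 2$ for any hypersurface $\Sigma$ when $k=g$, the null expansions satisfy $\theta_\pm = H \pm 2$. Thus MITS coincide with surfaces of mean curvature $H=2$ and MOTS with surfaces of mean curvature $H=-2$. The outermost surface $\Sigma_0$ of mean curvature $H=2$ from the corollary is therefore the outermost MITS, which I would select as an outermost apparent horizon in the sense of the paper. Applying Theorem \ref{t:hyp} then gives $E_{\mathrm{hyp}}\geq \sqrt{\mathcal{A}/\mathcal{C}}$, where $\mathcal{A}$ is the minimal area enclosing $\Sigma_0$.

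To conclude I must show $\mathcal{A}=A$. Since $\Sigma_0$ encloses itself, $\mathcal{A}\leq A$ is automatic; the reverse inequality amounts to showing $\Sigma_0$ is area-outerminimizing. This follows from the standard fact that outermost surfaces of mean curvature $H=2$ are outerminimizers of the weighted functional $\mathcal{F}(\Sigma)=\Area(\Sigma)-2\,\Vol(\Omega_\Sigma)$; comparing $\Sigma_0$ with any enclosing competitor $\Sigma'$ with $\Omega'\supset\Omega_0$ yields
\begin{equation*}
\Area(\Sigma')\geq \Area(\Sigma_0) + 2\bigl(\Vol(\Omega')-\Vol(\Omega_0)\bigr)\geq A.
\end{equation*}
The main obstacle is justifying that $\Sigma_0$ can legitimately be chosen as an outermost apparent horizon in the paper's sense, i.e., ruling out MOTS (surfaces of mean curvature $H=-2$) lying outside $\Sigma_0$ that could enlarge the horizon and potentially shrink $\mathcal{A}$ below $A$. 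A maximum principle argument exploiting $R_g\geq -6$ together with the fact that coordinate spheres near infinity have mean curvature tending to $2$ from below should exclude such inward-trapped surfaces in the asymptotic regime, leaving only the genuine $H=2$ horizon to consider.
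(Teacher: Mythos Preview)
Your overall strategy is the paper's: set $k=g$, verify the dominant energy condition, identify MITS with $H=2$ surfaces, establish outerminimizing via the brane functional, and invoke Theorem~\ref{t:hyp}. The brane argument you sketch is exactly the one the paper uses.

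The gap is in your exclusion of MOTS outside $\Sigma_0$. A maximum principle exploiting $R_g\geq-6$ and the asymptotic mean curvature of coordinate spheres does not accomplish this: the scalar curvature lower bound does not directly obstruct $H=-2$ surfaces, and large spheres having $H\to 2$ says nothing about whether an $H=-2$ surface sits somewhere in between. The paper's argument is different and much simpler: any surface with $H=-2$ satisfies $\theta_-=H-2=-4<0$ and is therefore inner trapped; together with the untrapped coordinate spheres at infinity, the MITS existence theory then produces a MITS strictly outside $\Sigma_0$, contradicting its outermost property. A second point you elide is that Theorem~\ref{t:hyp} as proved only delivers the inequality for the particular horizon manufactured by Lemma~\ref{l:exteriorregion}, not for an arbitrary one you designate; the paper handles this by verifying directly that the exterior $M_1$ of $\Sigma_0$ already has all the properties of a generalized exterior region---in particular the vanishing of $H_2(M_1,\partial M_1;\mathbb{Z})$, obtained by embedding $M_1$ inside the topologically simple exterior of the outermost minimal surface via \cite{HI}---so that the Jang machinery can be run with $\Sigma_0$ as the horizon from the outset.
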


An outline of the main arguments, and the organization of this manuscript are as follows. In Section \ref{s:genext}, as preparation for the use of level set techniques, we construct a so called generalized exterior region which simplifies the topology while not decreasing the minimal area required to enclose an outermost apparent horizon and preserving the mass/energy of a designated end.  The Jang equation is then solved over the generalized exterior region with blow-up at an outermost apparent horizon. In Section \ref{s:massofjang} it is shown that the total energy of the designated end within the Jang graph bounds the second derivatives of a harmonic map $U$ from the Jang graph to $\mathbb{R}^3$. Next, following Dong-Song \cite{Dong-Song} we consider the set $\Omega_{0,\tau}$ on which $U$ is pointwise-close to a local isometry, and prove in Section \ref{s:DSinequality} that the area of $\partial \Omega_{0,\tau}$ is universally bounded above by a multiple of the total energy squared. The desired Penrose inequalities are then established in Section \ref{s:mainproof}, by exploiting the cylindrical geometry of the nondesignated ends within the Jang graph to show that a projection of $\partial \Omega_{0,\tau}$ encloses the outermost apparent horizon, so that its area is an upper bound for $\mathcal{A}$.

\subsection*{Acknowledgements} The authors would like to thank Hubert Bray for his interest in these results, Conghan Dong for insightful discussions concerning the work \cite{Dong-PMT_Stability}, and Pei-Ken Hung for 
suggestions concerning the asymptotically hyperboloidal case.

\section{The Generalized Exterior Region}
\label{s:genext}
\setcounter{equation}{0}
\setcounter{section}{2}

In this section we show how to reduce the topology of initial data sets, so that harmonic level set techniques will be applicable. In particular, this construction reduces the Penrose inequality for general initial data sets to the Penrose inequality for initial data having vanishing first Betti number. This is achieved by passing to a \textit{generalized exterior region} associated with a given end, as described in the following result. Such a construction was first described in \cite{HKK} for the asymptotically flat case. Here we extend it to the asymptotically hyperboloidal setting as well, and refine certain properties to show that the minimal area required to enclose the relevant apparent horizons does not decrease in the process.

\begin{lemma}\label{l:exteriorregion} Let $(M,g,k)$ be an initial data set which is either asymptotically flat or asymptotically hyperboloidal, and satisfies the dominant energy condition. Then for each end $M_{end}$ there exists an associated outermost apparent horizon $\Sigma\subset M$, and a new initial data set $(M_{\mathrm{ext}},g_{\mathrm{ext}},k_{\mathrm{ext}})$ with boundary satisfying the following properties.
\begin{enumerate}
\item The second relative homology is trivial, $H_2(M_{\mathrm{ext}},\partial M_{\mathrm{ext}};\mathbb{Z})=0$.
\item $M_{\mathrm{ext}}$ has a single end which is isometric as initial data to $(M_{end},g,k)$.
\item The boundary $\partial M_{\mathrm{ext}}$ is an outermost apparent horizon.
\item The minimal area required to enclose $\partial M_{\mathrm{ext}}$ is not less than the minimal area required to enclose $\Sigma$.
\end{enumerate}
\end{lemma}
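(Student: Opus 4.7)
The plan is to extend the generalized exterior region construction of \cite{HKK} from the asymptotically flat setting to the asymptotically hyperboloidal one, and to refine it so that minimal enclosing areas are tracked. I would first apply the Andersson-Metzger existence theorem \cite{AnderssonMetzger} to obtain the outermost MOTS and outermost MITS with respect to the designated end $M_{end}$, then remove components as needed to make them pairwise disjoint, producing an outermost apparent horizon $\Sigma$. In the asymptotically hyperboloidal regime, the required untrapped outer barriers are furnished by coordinate spheres far out in the designated and non-designated ends, using the umbilic leading behavior $k\sim g$ visible in \eqref{1aojijhiji}. Let $M_1$ denote the component of $M\setminus\Sigma$ containing $M_{end}$, a manifold with boundary $\Sigma$ possibly still carrying non-trivial $H_2$ and additional asymptotic ends.

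The next step is iterated surgery inside $M_1$, following \cite{HKK}, to eliminate non-designated ends and kill $H_2$. For each non-designated end, \cite{AnderssonMetzger} produces an apparent horizon enclosing the end (with the necessary outer barrier supplied by far-out coordinate spheres); we cut $M_1$ along this surface and discard the component containing the end. So long as $H_2(M_1,\Sigma;\mathbb{Z})$ remains non-trivial, a minimization argument in a non-trivial homology class yields a stable MOTS or MITS representative, and we cut along it and discard the component not containing $M_{end}$. Only finitely many such operations are required. The resulting $(M_{\mathrm{ext}},g_{\mathrm{ext}},k_{\mathrm{ext}})$ inherits $(g,k)$ by restriction, so DEC is automatic; it has a single end isometric to $(M_{end},g,k)$, proving (2); trivial relative $H_2$, proving (1); and apparent-horizon boundary whose outermost-ness in $M_{\mathrm{ext}}$ follows from triviality of relative $H_2$ via a cobordism argument, proving (3).

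The genuinely new content is property (4), which I would handle by exploiting the fact that the construction only discards and never glues, so $M_{\mathrm{ext}}$ embeds as a subset of $M$ isometrically. Since $\Sigma$ and all excised regions lie in the closure of $M\setminus M_{\mathrm{ext}}$, any surface $\Gamma_{\mathrm{ext}}\subset M_{\mathrm{ext}}$ enclosing $\partial M_{\mathrm{ext}}$ from $M_{end}$'s perspective, when viewed inside $M$, separates $M_{end}$ from every component of $M\setminus M_{\mathrm{ext}}$, and in particular encloses $\Sigma$. Since the area of $\Gamma_{\mathrm{ext}}$ is unchanged by the embedding, taking the infimum over such surfaces delivers the inequality in (4).

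The hardest part is arranging the surgery to be purely by discarding in the asymptotically hyperboloidal regime, and ensuring that the minimization arguments producing stable MOTS/MITS representatives of non-trivial relative $H_2$ classes go through with only Wang asymptotics available. These adaptations are careful but standard; all modifications remain localized to compact regions or to regions far inside non-designated ends, leaving the decay of $(g,k)$ at the designated end untouched.
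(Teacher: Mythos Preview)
Your outline diverges from the paper's construction at the step where you kill $H_2(M_1,\partial M_1;\mathbb{Z})$, and the divergence is a genuine gap rather than an alternative route. The nontrivial classes in $H_2(M_1,\partial M_1;\mathbb{Z})$ are generated by \emph{nonseparating} spheres (the $\{\text{pt}\}\times S^2$ in $S^1\times S^2$ summands), so cutting along such a representative does not produce two components, and there is no ``component not containing $M_{end}$'' to discard. Moreover, MOTS and MITS are not variational objects for area, and there is no standard ``minimization in a homology class'' that produces them; the existence theory of \cite{AnderssonMetzger,Eichmair} is barrier-based and does not furnish a MOTS in a prescribed relative $H_2$ class.

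The paper handles this differently. It first establishes that the region outside $\Sigma$ has PSC topology, so $H_2(M_0,\partial M_0;\mathbb{Z})$ is carried entirely by the $S^1\times S^2$ summands. Rather than cutting inside $M_0$, it passes to the \emph{double cover} classified by the mod~$2$ reduction of the cohomology class dual to a nonseparating sphere; this cover has two asymptotically flat ends, between which a separating apparent horizon can be found via barriers, and one of the resulting components has strictly smaller first Betti number. Iterating drives $b_1$ to zero. Because this involves covering spaces, $M_{\mathrm{ext}}$ is \emph{not} in general a subset of $M$ (cf.\ the remark following the lemma), so your embedding argument for property~(4) does not apply. Instead, the paper proves~(4) by observing that the covering map $\pi$ is area-nonincreasing: any region $\Omega\subset M_1$ enclosing the end projects to $\pi(\Omega)$ containing the end of $M_0$, and $\mathcal{A}_0\leq|\partial\pi(\Omega)|\leq|\pi(\partial\Omega)|\leq|\partial\Omega|$.
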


\begin{remark}
Loosely speaking, $M_{\mathrm{ext}}$ is meant to represent the region outside of the outermost apparent horizon $\Sigma$. However, since the generalized exterior region is obtained by passing to covering spaces, components of $\partial M_{\mathrm{ext}}$ may project to apparent horizons in $M$ which are merely immersed. 
\end{remark}

\begin{proof}
As mentioned above, the existence of a generalized exterior region satisfying $(1)$, $(2)$, and $(3)$ was proven in \cite{HKK}*{Proposition 2.1} for the asymptotically flat case. Here we will describe how to appropriately modify the construction in order to achieve $(4)$, and to establish that the procedure can be performed in the asymptotically hyperboloidal context as well.

Assume first that the initial data are asymptotically flat. The first four paragraphs in the proof of \cite{HKK}*{Proposition 2.1} yield an embedded 3-dimensional submanifold with boundary $(M',g,k)$ of the original data set, which is asymptotically flat and has the following additional properties. Each boundary component of $\partial M'$ is a 2-sphere MOTS or MITS, $M'$ has PSC (positive scalar curvature) topology in that it is diffeomorphic to a finite connected sum of spherical space forms, $S^1\times S^2$'s, and $\mathbb{R}^3$'s with a finite number of 3-balls removed (corresponding to horizons), and one of the ends coincides with $(M_{end},g,k)$. Consider now an outermost apparent horizon $\Sigma'\subset M$ with respect to $M_{end}$.
According to \cite{Galloway}*{Theorem 3.2}, each component of $\Sigma'$ must topologically be a 2-sphere. Moreover, the outermost condition implies that each component of $\partial M'$ is either separated from $M_{end}$ by $\Sigma'$, or has a transverse intersection with $\Sigma'$. We may then modify $\Sigma'$ to construct another outermost apparent horizon $\Sigma\subset M$ with respect to $M_{end}$ in the following way: if a component of $\Sigma'$ does not have a transverse intersection with $\partial M'$ then include this component within $\Sigma$, if a component of $\Sigma'$ has a transverse intersection with $\partial M'$ then replace it in $\Sigma$ with the relevant components of $\partial M'$. The manifold with boundary and one end produced by taking the closure of the component of $M\setminus\Sigma$ that contains $M_{end}$, will be denoted $M_0$.

We claim that $M_0$ must also have PSC topology. To see this, note that without loss of generality it may be assumed that $M_0$ is a proper subset of $M'$, by slightly pushing inwards any components of $\partial M_0$ that have nontrivial intersection with $\partial M'$. Since the outermost apparent horizon $\Sigma$ has a finite number of components, due to uniform bounds on its area and second fundamental form \cite{AEM}*{Theorem 4.6}, the desired claim is implied by the following fact. If $\mathcal{S}$ is a finite collection of embedded spheres within a 3-manifold $N$ of PSC topology, then all components of $N\setminus\mathcal{S}$ also admit PSC topology. By using induction, it will suffice to consider the case where $\mathcal{S}$ has a single component $S$. If $S$ is nonseparating in $N$ then it must represent a slice of an $S^1\times S^2$ summand in the prime decomposition of $N$. In this case, $N\setminus S$ has PSC topology since its prime decomposition is the same as that of $N$ minus the relevant $S^1 \times S^2$ summand. If $S$ is separating, then it induces a decomposition $N=N_1 \# N_2$. Moreover, due to the uniqueness of prime decompositions both $N_1$ and $N_2$ have PSC topology. To summarize, $M_0$ has one end and satisfies the same properties as $M'$, with the additional feature that $\partial M_0 =\Sigma$ is an outermost apparent horizon with respect to $M_{end}$.


Observe that the homology group $H_2(M_0,\partial M_0;\mathbb{Z})$ is generated by the spheres in the $S^1\times S^2$ summands, and therefore the desired property (1) will be achieved by dealing with these spheres. Since $H^1(M_0;\mathbb{Z})\cong H_2(M_0,\partial M_0;\mathbb{Z})$ is the torsion-free part of $H_1(M_0;\mathbb{Z})$, it will be sufficient to iteratively reduce the first Betti number $b_1=\mathrm{rk}H_1(M_0;\mathbb{Z})$. A procedure to accomplish this appears in the proof of
\cite{HKK}*{Proposition 2.1}, which we briefly outline here. Given a sphere in an $S^1\times S^2$ summand, consider the associated double cover $\pi:\widetilde{M}_1\to M_0$ classified by the mod 2 reduction of the cohomology class Poincar\'{e} dual to the sphere. By pulling back $g$ and $k$, the cover $\widetilde{M}_1$ becomes an initial data set with two asymptotically flat ends. Moreover, a simple calculation shows that $b_1(\widetilde{M}_1)=2b_1(M_0)-1$. By picking one of two ends of $\widetilde{M}_1$ as a reference and using the untrapped coordinate spheres in those ends together with the apparent horizon boundary as barriers for the outermost MOTS problem, one may apply \cite{topologicalcensorship}*{Theorem 4.2} (or rather a slight generalization of it to allow for nonstrict inequalities, see \cite{AnderssonMetzger}*{Section 5} or \cite{Eichmair}*{Remark 4.1}) to find an apparent horizon $\Sigma_1\subset\widetilde{M}_1$ separating the two ends. There will be two components of $\widetilde{M}_1\setminus\Sigma_1$, each containing an isometric copy of the asymptotically flat end $M_{end}$. A brief calculation, using the fact that $\Sigma_1$ consists of spheres, shows that one of the components of $\widetilde{M}_1\setminus \Sigma_1$ has first Betti number strictly less than $b_1(M_0)$. The metric completion of this component having smaller first Betti number will be denoted by $M_1$. Moreover, it may be assumed that $\partial M_1$ is an outermost apparent horizon, since if this is not initially the case then we may pass to an outermost apparent horizon while preserving all relevant properties, in the same way as passing from $M'$ to $M_0$ above.

Let $\mathcal{A}_i$ be the minimum area required to enclose $\partial M_i$, for $i=0,1$. We claim that $\mathcal{A}_0 \leq \mathcal{A}_1$. Let $\Omega\subset M_1$ be a region enclosing the end of $M_1$, so that $\partial\Omega$ is a competitor for the minimizing enclosure of $\partial M_1$. 
Clearly $\pi(\Omega)$ contains the end of $M_0$, so that $\partial \pi(\Omega)$ is a competitor for the minimizing enclosure of $\partial M_0$ and satisfies $|\partial\pi(\Omega)|\geq\mathcal{A}_0$.
Since $\partial\pi(\Omega)\subset \pi(\partial\Omega)$ and $\pi$ is area nonincreasing, we conclude that 
\begin{equation}
\mathcal{A}_0\leq |\partial\pi(\Omega)|\leq|\pi(\partial\Omega)|\leq|\partial\Omega|.
\end{equation}
The claim follows by taking the infimum over all such regions $\Omega$. If $b_1(M_1)=0$ then we set $(M_{\mathrm{ext}},g_{\mathrm{ext}},k_{\mathrm{ext}})=(M_1,\pi^* g,\pi^* k)$ to obtain the desired properties; note that property (4) is satisfied since $\Sigma=\partial M_0$. If $b_1(M_1)\neq 0$, then we may repeat the double cover argument until the first Betti number vanishes. The generalized exterior region then arises from the last of these covers.

It remains to describe the modifications necessary in the asymptotically hyperboloidal case. At the beginning and end of the proof of \cite{HKK}*{Proposition 2.1}, a perturbation to strict dominant energy condition \cite{dim8PMT}*{Theorem 22} was utilized. The analogous perturbation result in the asymptotically hyperboloidal setting is given by  
\cite{DahlSak}*{Theorem 5.2}, and may be used in the same manner. Next, we must justify the claim that $M'$ has PSC topology. In the asymptotically flat case this relies heavily on the Jang equation through the proof of \cite{AnderssonDahl+}*{Theorem 1.2}, and thus these arguments must be amended. In her analysis of the Jang equation on complete asymptotically hyperboloidal initial data sets $(M,g,k)$ satisfying the strict dominant energy condition, Sakovich proved \cite{Sakovich}*{Proposition 7.2} that there exists a region $\Omega\subset M$ enclosing $M_{end}$ with spherical apparent horizon boundary and which admits a complete metric $\widetilde{g}$ (a modification of the Jang metric) with asymptotically flat ends, exactly cylindrical geometry hear $\partial \Omega$, and satisfying
\begin{equation}
\int_\Omega\left((2+\epsilon)|\nabla \phi|_{\widetilde{g}}^2+R_{\widetilde{g}}\phi^2\right)dV_{\widetilde{g}}\geq\int_{\Omega}(\mu-|J|_g)\phi^2dV_{\widetilde{g}}
\end{equation}
for all $\phi\in C^\infty_c(\Omega)$ and some small $\epsilon>0$. This is exactly the outcome of Step 2 in the proof of \cite{AnderssonDahl+}*{Theorem 1.2}, and one may then carry out the remaining steps to conclude that $\Omega$ has the desired PSC topology. Lastly, in certain arguments the untrapped condition of large coordinates spheres in the asymptotic ends was utilized to construct barriers for the MOTS/MITS existence problem; for instance the existence of the apparent horizon $\Sigma_1$ above. In the asymptotically hyperboloidal setting with Wang asymptotics \eqref{1aojijhiji}, this condition remains valid as shown in \cite{Sakovich}*{Lemma 4.3}. Therefore, with these modifications the arguments presented above may be used to establish the desired result in the asymptotically hyperboloidal context.
\end{proof}


\section{Energy, Harmonic Functions, and the Jang Graph}
\label{s:massofjang}
\setcounter{equation}{0}
\setcounter{section}{3}

In this section we will obtain a lower bound for the total energy of an end within the original initial data set, in terms of a weighted $L^2$ Hessian involving asymptotically linear harmonic functions on the Jang deformation of a generalized exterior region. The next result identifies relevant features of the Jang data.

\begin{proposition}\label{t:modifiedjanggraph}
Let $(M,g,k)$ be an initial data set which is either asymptotically flat of order $q>1/2$ or asymptotically hyperboloidal, and satisfies the dominant energy condition. Then for each end $M_{end}$ there exists a `Jang triple' $(\overline{M},\overline{g},X)$ consisting of a complete Riemannian 3-manifold with ends $\{\mathcal{E}_l\}_{l=0}^{l_0}$, and a smooth vector field, all satisfying the following properties. 
\begin{enumerate}
\item The second relative homology is trivial, $H_2(\overline{M},\cup_{l=1}^{l_0} \mathcal{E}_l;\mathbb{Z})=0$.
\item $\mathcal{E}_0$ is asymptotically flat of order $\overline{q}>1/2$, with the third equation of \eqref{1} replaced with $R_{\overline{g}}=O(r^{-3})$ when $M$ is asymptotically hyperboloidal. Moreover, the ADM energy of this end is related to the original energy by $E(\mathcal{E}_0)=E(M_{end})$ when $M$ is asymptotically flat, and $E(\mathcal{E}_0)=2E_{\mathrm{hyp}}(M_{end})$ when $M$ is asymptotically hyperboloidal. Here $\overline{q}=q$ if $M$ is AF, and $\overline{q}=1$ if $M$ is AH.
\item The area of any surface in $\overline{M}$ that separates $\cup_{l=1}^{l_0}\mathcal{E}_l$ from $\mathcal{E}_0$ is not less than the minimal area required to enclose the outermost apparent horizon $\Sigma\subset M$, where $\Sigma$ is associated with $M_{end}$ and given by Lemma \ref{l:exteriorregion}.
\item  $R_{\overline{g}}\in L^1(\mathcal{E}_0)$ when $M$ is asymptotically flat, and $R_{\overline{g}}+2\mathrm{div}_{\overline{g}}X\in L^1(\mathcal{E}_0)$ when $M$ is asymptotically hyperboloidal. In both cases $R_{\overline{g}}\geq 2|X|_{\overline{g}}^2-2\mathrm{div}_{\overline{g}}X$.
\item $|X|_{\overline{g}}=O(r^{-1-2q})$ on $\mathcal{E}_0$ when $M$ is asymptotically flat, $|X|_{\overline{g}}=O(r^{-2})$ on $\mathcal{E}_0$ when $M$ is asymptotically hyperboloidal and in this case $\lim_{r\to\infty}\int_{S_r}\langle X,\nu\rangle d\overline{A}=-4\pi E(\mathcal{E}_0)$ where $\nu$ is the unit outer normal to the coordinate sphere $S_r\subset\mathcal{E}_0$.
Furthermore, $|X|_{\overline{g}}$ is bounded on $\mathcal{E}_l$ for $l=1,\dots,l_0$. 
\end{enumerate}
\end{proposition}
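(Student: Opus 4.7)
The plan is to first apply Lemma \ref{l:exteriorregion} to replace $(M,g,k)$ with a generalized exterior region $(M_{\mathrm{ext}},g_{\mathrm{ext}},k_{\mathrm{ext}})$ whose boundary is an outermost apparent horizon and which has trivial $H_2$ relative to boundary. Over this manifold I would solve the Jang equation
\[
H_{\overline{M}}-\mathrm{tr}_{\overline{g}}\overline{k}=0,
\]
with prescribed blow-up on $\partial M_{\mathrm{ext}}$: the Jang function $f\to+\infty$ at MOTS components and $f\to-\infty$ at MITS components. Existence is provided by the Schoen--Yau theory combined with the boundary blow-up treatments of Metzger and Eichmair in the AF setting, and by \cite{Sakovich} in the AH setting. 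The Jang graph $\overline{M}:=\{(x,f(x))\}$, equipped with the induced metric $\overline{g}=g_{\mathrm{ext}}+df\otimes df$, is a complete 3-manifold; vertical projection identifies $\mathcal{E}_0$ with the asymptotic end of $M_{\mathrm{ext}}$, while the boundary blow-up creates cylindrical ends $\mathcal{E}_1,\ldots,\mathcal{E}_{l_0}$ at the horizon components.

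Properties (1)--(3) follow without much difficulty. For (1), the diffeomorphism $\overline{M}\cong\mathrm{int}(M_{\mathrm{ext}})$ combined with the fact that each cylindrical end deformation retracts onto the corresponding boundary component yields $H_2(\overline{M},\cup_{l\geq 1}\mathcal{E}_l;\mathbb{Z})\cong H_2(M_{\mathrm{ext}},\partial M_{\mathrm{ext}};\mathbb{Z})=0$. For (2), the standard asymptotic analysis of $f$ gives $E(\mathcal{E}_0)=E(M_{end})$ in AF (Schoen--Yau); in AH, Sakovich's expansion shows that the Jang metric asymptotes to an AF (rather than hyperboloidal) metric, with the well-known factor-of-two identity $E(\mathcal{E}_0)=2E_{\mathrm{hyp}}$ and scalar curvature decay $R_{\overline{g}}=O(r^{-3})$. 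For (3), the bilinear inequality $\overline{g}\geq g_{\mathrm{ext}}$ makes the vertical projection $\pi:\overline{M}\to M_{\mathrm{ext}}$ area nonincreasing; a surface in $\overline{M}$ separating $\mathcal{E}_0$ from the cylindrical ends thus projects to a surface of no greater area in $M_{\mathrm{ext}}$ enclosing $\partial M_{\mathrm{ext}}$, and by Lemma \ref{l:exteriorregion}(4) this area is at least $\mathcal{A}$.

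The heart of the argument is (4) and (5), which rests on the Schoen--Yau scalar curvature identity for the Jang graph
\[
R_{\overline{g}}=2(\mu-J(w))+|h-\overline{k}|_{\overline{g}}^{2}+2|q|_{\overline{g}}^{2}-2\,\mathrm{div}_{\overline{g}}(q),
\]
where $w$ is a canonical 1-form with $|w|_{\overline{g}}\leq 1$ and $q$ is a vector field on $\overline{M}$ built from $\nabla f$ and the second fundamental form $h$ of the graph. Setting $X:=q$, the dominant energy condition immediately yields $R_{\overline{g}}\geq 2|X|_{\overline{g}}^{2}-2\,\mathrm{div}_{\overline{g}}X$. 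Decay of $X$ follows from decay of $\nabla f$: the AF asymptotics of $f$ give $|X|_{\overline{g}}=O(r^{-1-2q})$, while in AH, Sakovich's expansion gives $|X|_{\overline{g}}=O(r^{-2})$ along with the flux formula $\lim_{r\to\infty}\int_{S_r}\langle X,\nu\rangle\, d\overline{A}=-4\pi E(\mathcal{E}_0)$. Uniform $C^1$ control on $f$ in the blow-up region keeps $|X|_{\overline{g}}$ bounded on each cylindrical end. Integrability of $R_{\overline{g}}$ on $\mathcal{E}_0$ in AF is then immediate from the $O(r^{-2-2q})$ pointwise bound; in AH, $R_{\overline{g}}$ alone is only $O(r^{-3})$ and borderline non-integrable, but its leading contribution is exactly canceled by $-2\,\mathrm{div}_{\overline{g}}X$, giving $R_{\overline{g}}+2\,\mathrm{div}_{\overline{g}}X=O(r^{-3-q_0})$, which is integrable.

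The main obstacle is the asymptotically hyperboloidal case, where the precise energy identity $E(\mathcal{E}_0)=2E_{\mathrm{hyp}}$, the flux normalization $-4\pi E(\mathcal{E}_0)$ for $X$, and the exact cancellation of the borderline $r^{-3}$ terms between $R_{\overline{g}}$ and $\mathrm{div}_{\overline{g}}X$ all require a careful expansion of the Jang function in the Wang asymptotics via \cite{Sakovich}. A secondary technical point is handling the simultaneous MOTS and MITS boundary blow-up, as existence theory for the Jang equation with boundary blow-up is usually formulated for a single sign of blow-up.
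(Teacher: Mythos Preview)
Your proposal is correct and follows essentially the same approach as the paper: apply Lemma~\ref{l:exteriorregion}, solve Jang's equation over the generalized exterior region with blow-up at the horizon (Metzger/Han--Khuri in AF, combined with Sakovich's barriers in AH), and read off properties (1)--(5) from the Schoen--Yau scalar curvature identity together with the known asymptotic expansions. You have also correctly identified the two genuine technical points---the precise AH expansions from \cite{Sakovich} needed for the energy and flux identities, and the combined MOTS/MITS blow-up---that the paper handles exactly as you anticipate.
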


\begin{remark}\label{remark111}
Note that in the asymptotically hyperboloidal case the Jang scalar curvature $R_{\overline{g}}$ is not necessarily integrable over $\mathcal{E}_0$, and therefore the ADM energy of this end may not be a geometric invariant. When referring to the ADM energy $E(\mathcal{E}_0)$ in this case, we will use the asymptotic coordinates employed in \cite{Sakovich}*{Corollary 6.11}.
\end{remark}

\begin{proof}
Assume first that $M$ is asymptotically flat. By Lemma \ref{l:exteriorregion}, there exists a generalized exterior region $(M_{\mathrm{ext}},g_{\mathrm{ext}},k_{\mathrm{ext}})$ associated with $M_{end}$ and a corresponding outermost apparent horizon $\Sigma\subset M$. Since $\partial M_{\mathrm{ext}}$ is itself an outermost apparent horizon, we may apply the techniques of  Metzger \cite{Metzger}*{Theorem 3.1} (see also \cite{HanKhuri}*{Theorem 1.1}) to find a smooth solution of Jang's equation
\begin{equation}\label{jange}
\left(g_{\mathrm{ext}}^{ij}-\frac{f^i f^j}{1+|\nabla f|^2}\right)\left(\frac{\nabla_{ij}f}{\sqrt{1+|\nabla f|^2}}-(k_{\mathrm{ext}})_{ij}\right)=0\quad\quad\text{ on }M_{\mathrm{ext}},
\end{equation}
such that 
\begin{equation}\label{ofanofinoihh}
f=O_3(r^{1-2q})\quad\text{ as }r\rightarrow\infty,\quad\quad\quad f(p)\rightarrow\pm\infty\quad\text{ as }\mathrm{dist}(p,\partial_{\pm}M_{\mathrm{ext}})\rightarrow 0,
\end{equation}
where $r$ is the radial coordinates in the asymptotically flat end, and $\partial M_{\mathrm{ext}}=\partial_{+}M_{\mathrm{ext}}\sqcup\partial_{-}M_{\mathrm{ext}}$ is decomposed into MOTS and MITS components respectively. It should be pointed out that although the hypotheses of the cited results do not precisely cover the current setting, only straightforward modifications are required to obtain the stated conclusion. Let $\overline{M}=M_{\mathrm{ext}}\setminus\partial M_{\mathrm{ext}}$, and set $\overline{g}=g+df^2$ to be the induced metric on the Jang graph $t=f(p)$ in the product 4-manifold $(M\times\mathbb{R},g+dt^2)$. The decay in \eqref{ofanofinoihh} yields an asymptotically flat end $\mathcal{E}_0 \subset\overline{M}$ of order $q$ with ADM energy $E(\mathcal{E}_0)=E(M_{end})$, so that $(2)$ holds. Moreover, the blow-up in \eqref{ofanofinoihh} may be refined to yield asymptotically cylindrical ends $\{\mathcal{E}_l\}_{l=1}^{l_0}$ associated with each component of $\partial M_{\mathrm{ext}}$. In particular, $(1)$ is satisfied in light of the homology statement of Lemma \ref{l:exteriorregion}, and $(3)$ follows from the fact that $\overline{g}\geq g$ along with Lemma \ref{l:exteriorregion} part $(4)$. Furthermore, if $g_0$ denotes the induced metric on $\partial M_{\mathrm{ext}}$ and $\hat{g}=dt^2 +g_0$ is the product metric on cylinders $\mathcal{C}^{\pm}=(0,\pm\infty)\times\partial_{\pm}M_{\mathrm{ext}}$, then by expressing the Jang surface as a graph over these cylinders and pulling the metric back \cite{Metzger}*{Theorem 4.1} (\cite{SchoenYau}*{Corollary 2}) yields
\begin{equation}\label{101}
|\hat{\nabla}^{a}(\overline{g}-\hat{g})|_{\hat{g}}=o(1),\quad\text{ as }t\rightarrow\pm\infty\text{ in }\mathcal{C}^{\pm},
\end{equation}
for any $a$ where $\hat{\nabla}$ is covariant differentiation with respect to $\hat{g}$. In the case of a strictly stable apparent horizon more precise asymptotics are available \cite{Yu}*{Theorem 2.1}, \cite{Zhao}*{Section 2}, although this will not be needed here. The scalar curvature of the Jang metric \cite{SchoenYau}*{(2.25)} is given by
\begin{equation}\label{sc1}
R_{\overline{g}}=16\pi(\mu-J(w))+|h-k|_{\overline{g}}^2 +2|X|_{\overline{g}}^2 -2\mathrm{div}_{\overline{g}}X,
\end{equation}
where
\begin{equation}
w=\frac{\nabla f}{1+|\nabla f|^2},\quad\quad\quad h=\frac{\nabla^2 f}{1+|\nabla f|^2},\quad\quad\quad X=(h-k)(w,\cdot).
\end{equation}
It follows that $(4)$ is valid, and $(5)$ holds as a consequence of \eqref{ofanofinoihh} and \cite{Metzger}*{Theorem 4.1}.

Consider now the asymptotically hyperboloidal case, which will be treated in a similar manner.  We may solve Jang's equation on the generalized exterior region with prescribed cylindrical blow-up at the outermost apparent horizon boundary as above, and with hyperboloidal asymptotics in the end given by
\begin{equation}\label{oanffoianinihh}
f=\sqrt{1+r^2}+\alpha \log r +\psi+O_3(r^{-1+\varepsilon})\quad\quad\text{ as }r\rightarrow\infty
\end{equation}
for some $\varepsilon\in(0,1)$, where $\alpha,\psi\in C^3(S^2)$ and $r$ is the radial coordinate of the end. This may be achieved by combining the methods of \cite{Metzger}*{Theorem 3.1} (\cite{HanKhuri}*{Theorem 1.1}) at the horizon, and the barriers of Sakovich \cite{Sakovich}*{Section 3} in the asymptotically hyperboloidal end. A brief outline is as follows. First, extend the initial data a small amount inside the horizon to obtain $(M'_{\mathrm{ext}},g'_{\mathrm{ext}},k'_{\mathrm{ext}})$ such that $\partial M'_{\mathrm{ext}}$ is both future and past trapped, and there is a neighborhood of each $\partial_{\pm}M'_{\mathrm{ext}}$ which is foliated by surfaces with $\theta_{\pm}<0$. These considerations allow for the construction of barriers at the inner boundary, and also help to control the blow-up region. At large coordinate spheres $S_r$ in the asymptotic end, local barriers may also be found since these surfaces are untrapped \cite{Sakovich}*{Lemma 4.3}. Thus, on the region within any large coordinate sphere $S_{r(\tau)}$, one may solve the Dirichlet problem for the capillarity regularization of the Jang equation in which $\tau f$ is inserted on the right-hand side of \eqref{jange}. Dirichlet conditions $f=\pm\frac{\delta}{\tau}$ are prescribed at $\partial_{\pm}M'_{\mathrm{ext}}$, while at the coordinate sphere $S_{r(\tau)}$ we set $f=\phi$. Here $\tau>0$ is a small parameter, $\delta>0$ is a fixed constant depending on local geometry, $r(\tau)$ is a radius depending on $\tau$, and $\phi$ is a function defined on the asymptotic end which is determined by the hyperboloidal barriers of Sakovich. A sequence $\tau_n \rightarrow 0$ is then chosen along with radii $r(\tau_n)\rightarrow\infty$ such that $\tau_n \phi$ remains uniformly bounded on $S_{r(\tau_n)}$; let $f_n$ denote the corresponding solutions. This ensures that a maximum principle argument yields uniform $C^1$ bounds for $\tau_n f_n$. As described in \cite{Sakovich}*{Section 5}, local parametric estimates then yield the control necessary to obtain subconvergence to a solution of \eqref{jange} satisfying \eqref{oanffoianinihh}, while \cite{Metzger}*{Section 3} provides the desired blow-up at the horizon $\partial M_{\mathrm{ext}}\subset M'_{\mathrm{ext}}$.

Having solved Jang, properties $(1)$ and $(3)$ follow as in the asymptotically flat case. In particular, the Jang manifold $(\overline{M},\overline{g})$ has again an asymptotically flat end \cite{Sakovich}*{Section 7} of order 1, and asymptotically cylindrical ends for each component of $\partial M_{\mathrm{ext}}$ all satisfying \eqref{101} and on which $|X|_{\overline{g}}$ remains bounded. According Proposition 7.1 and Lemma D.1 of \cite{Sakovich}, in $\mathcal{E}_0$ we have $|X|_{\overline{g}}=O(r^{-2})$ as well as the expansions
\begin{equation}
R_{\overline{g}}=\frac{2\Delta_{S^2}\psi}{r^3} +O(r^{-4+\varepsilon}),\quad \mathrm{div}_{\overline{g}}X=-\frac{\Delta_{S^2}\psi}{r^3}+O(r^{-4}),\quad \langle X,\nu\rangle=-\frac{\alpha}{r^2}+O(r^{-3+\varepsilon}),
\end{equation}
where the last equality is evaluated on coordinate spheres $S_r$ to which $\nu$ is the unit outer normal. Lastly, \cite{Sakovich}*{Corollary 6.11} yields $E(\mathcal{E}_0)=\alpha=2E_{\mathrm{hyp}}(M_{end})$. These facts, together with \eqref{sc1}, show that the remaining properties $(2)$, $(4)$, and $(5)$ are valid in the asymptotically hyperboloidal context.
\end{proof}

\subsection{Harmonic functions on the Jang graph}

We will next investigate the behavior of asymptotically linear harmonic functions on the Jang manifold. Consider the asymptotically cylindrical ends $\{\mathcal{E}_l\}_{l=1}^{l_0}$ of this manifold, and view them as being isometrically embedded within $(M\times\mathbb{R},g+dt^2)$. Denote by $\mathcal{E}_l^t$ the cross-section obtained by intersecting such an end with the $t$-level set; note that there exists $t_0>0$ sufficiently large such that for $|t|\geq t_0$ each cross-section is either empty or consists of a single sphere. For $t$ in this range let $\overline{M}_t$ be the metric completion of the component of $\overline{M}\setminus \cup_{l=1}^{l_0} \mathcal{E}_l^{\pm t}$ which contains $\mathcal{E}_0$. Since the approximation to the model cylinders is controlled by \eqref{101}, we may cap-off $\partial\overline{M}_t =\cup_{l=1}^{l_0}\mathcal{E}_l^{\pm t}$ with closed 3-balls $\{\widetilde{B}_l\}_{l=1}^{l_0}$ having uniformly bounded geometry independent of $t$, and let $(\widetilde{M}_t,\widetilde{g}_t)$ denote the resulting Riemannian manifold. Note that $\widetilde{M}_t$ is complete with one asymptotically flat end and satisfies $\widetilde{g}_t =\overline{g}$ on $\overline{M}_t$. By applying \cite{Bartnik}*{Theorem 3.1}, for each $t\geq t_0$ we obtain harmonic functions with prescribed linear asymptotics
\begin{equation}\label{hfunctions}
\Delta_{\widetilde{g}_t}u_t =0 \quad\quad\text{ on }\widetilde{M}_t,\quad\quad\quad u_t=a_i x^i +O_2(r^{1-\overline{q}})\quad\text{ as }r\rightarrow\infty,
\end{equation}
where $a_i$ are given constants and $x^i$, $\overline{q}$ are respectively coordinates, and the order, of the asymptotically flat end. In what follows, covariant differentiation with respect to $\widetilde{g}_t$ and $\overline{g}$ will be denoted by $\widetilde{\nabla}$ and $\overline{\nabla}$. 

\begin{lemma}\label{harmoniclemma}
There exists a sequence of levels $t_n \rightarrow \infty$ and corresponding harmonic functions $u_{t_n}$ on $\widetilde{M}_{t_n}$ satisfying the following properties.
\begin{enumerate}
\item  $u_{t_n}\rightarrow u$ in $C^{2,\alpha}_{loc}(\overline{M})$ for any $\alpha\in(0,1)$, where $u$ is a harmonic function on $(\overline{M},\overline{g})$ that admits the asymptotics of \eqref{hfunctions} in $\mathcal{E}_0$. 
\item $|\widetilde{\nabla} u_{t_n}|\rightarrow 0$ uniformly on the caps $\cup_{l=1}^{l_0}\widetilde{B}_l$. 
\item For each $l=1,\dots,l_0$ there exists a constant $c_l$ such that
$|\overline{\nabla}^{a}(u-c_l)|=o(1)$, $a=0,1,2$ as $|t|\rightarrow \infty$ in $\mathcal{E}_{l}$.
\end{enumerate}
\end{lemma}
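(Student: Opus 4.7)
The plan is to combine uniform elliptic estimates on compact subsets of $\overline{M}$ with cylindrical energy decay on the necks of $\widetilde{M}_t$, proceeding in three steps matching the three properties.

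First, I extract the subsequential limit in (1). Since $u_t$ is produced by Bartnik's variational construction, the remainder $v_t := u_t - \chi \, a_i x^i$, with $\chi$ a smooth cutoff supported in $\mathcal{E}_0$, lies in a weighted Sobolev space with
\begin{equation}\label{eq:energyunif}
\int_{\widetilde{M}_t}|\widetilde{\nabla} v_t|^2\,dV_{\widetilde{g}_t} \leq C,
\end{equation}
for a constant $C$ independent of $t$, because $\chi \, a_i x^i$ is itself an admissible competitor with uniformly bounded energy on the caps. Combined with the uniform weighted decay of $v_t$ from \cite{Bartnik}*{Theorem 3.1}, \eqref{eq:energyunif} yields a uniform $L^\infty$ bound for $u_t$ on any fixed compact $K \subset \overline{M}$. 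Standard interior Schauder estimates upgrade this to uniform $C^{2,\alpha}(K)$ bounds, and a diagonal Arzel\`a--Ascoli argument extracts a subsequence $u_{t_n} \to u$ in $C^{2,\alpha}_{\mathrm{loc}}(\overline{M})$ with $u$ harmonic on $(\overline{M},\overline{g})$. The asymptotic form on $\mathcal{E}_0$ passes to the limit from the uniform weighted decay of $v_t$.

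Next, I establish (2) together with an appropriate choice of $t_n$. The asymptotic cylindrical estimate \eqref{101} implies that for any $\delta > 0$ there exists $T_\delta$ such that, for $|s| \geq T_\delta$, the end $(\mathcal{E}_l,\overline{g})$ is $\delta$-close in $C^k$ to the model cylinder $(\mathbb{R} \times \partial_{\pm} M_{\mathrm{ext}},dt^2 + g_0)$. On such a long cylindrical neck, a harmonic function with bounded Dirichlet energy satisfies exponential decay of its deviation from its slice average. With $c_{l,t}(s)$ denoting the spherical average of $u_t$ on the cross-section at height $s$, I would derive a three-slice energy inequality, iterate it along overlapping sub-cylinders, and arrive at
\begin{equation}\label{eq:expdecay}
\int_{\{s\}\times \partial_{\pm} M_{\mathrm{ext}}}|\widetilde{\nabla}(u_t - c_{l,t}(s))|^2\,dA \leq C e^{-\lambda (s - T_\delta)}
\end{equation}
with $\lambda > 0$ independent of $t$, where the total cylindrical energy driving the right-hand side is controlled by \eqref{eq:energyunif}. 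Thus $u_t$ is close to a constant near $\partial \overline{M}_t$, and since $\widetilde{B}_l$ has uniformly bounded geometry, interior Schauder gradient estimates applied on the cap give $|\widetilde{\nabla} u_{t_n}| \to 0$ uniformly on $\cup_l \widetilde{B}_l$ along a subsequence $t_n \to \infty$. For (3), the limit $u$ inherits bounded Dirichlet energy on each $\mathcal{E}_l$ from the uniform bound on $u_{t_n}$, so applying \eqref{eq:expdecay} in the limit (now with $T_\delta \to \infty$) shows that $u$ is asymptotically constant on each cylindrical end with exponentially decaying gradient, and Schauder estimates promote this to the stated $C^2$ decay.

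The main obstacle is justifying \eqref{eq:expdecay} uniformly in $t$. The metric perturbations in \eqref{101} alter both the Laplacian and the Poincar\'e gap constants on each slice, and one must verify that these perturbations do not accumulate to destroy the exponential rate over arbitrarily long necks. A careful iteration of a three-slice lemma on overlapping sub-cylinders, combined with the $o(1)$ decay of the perturbation as $|s|\to\infty$, should yield a uniform rate $\lambda$; handling both positive and negative ends symmetrically and balancing the exponential gains against polynomial growth of the error terms is the delicate point.
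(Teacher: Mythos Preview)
Your approach diverges from the paper's in two essential respects, and the second contains a genuine gap.

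For property (1), your justification of the uniform $C^0$ bound is incomplete. Bartnik's weighted estimates control $v_t$ on the asymptotically flat end, but the constants in weighted Sobolev embeddings depend on the underlying manifold $\widetilde{M}_t$, which varies with $t$; a uniform Dirichlet energy bound alone does not yield $t$-independent $L^\infty$ control on compact sets deep in the cylindrical region. The paper handles this with an explicit barrier: an anchor solution $v$ supported in $\mathcal{E}_0$ together with a radial supersolution $\psi_\lambda$ (constant on the interior, decaying like $r^{-1/2}$ on the end), so that the maximum principle gives $|u_{t,r}-v|\leq\psi_\lambda$ independently of $t$ and $r$.

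The more serious issue is your treatment of (2) and (3). The paper's Remark immediately preceding the proof explains precisely why the spectral-gap strategy you propose is avoided: the eigenfunction-expansion (equivalently, three-slice iteration) argument requires the Jang metric to approach the model cylinder at a controlled rate, which is known only when the apparent horizon is strictly stable. Here one has only the $o(1)$ convergence of \eqref{101}, and you yourself flag that making the iteration uniform over arbitrarily long perturbed necks is ``the delicate point''---but you do not resolve it. There is also a logical gap: your inequality \eqref{eq:expdecay} controls only the deviation of $u_t$ from its \emph{slice average} $c_{l,t}(s)$, not the drift of the averages themselves, so the conclusion that $u_t$ is ``close to a constant near $\partial\overline{M}_t$'' does not follow. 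The paper proceeds in the opposite order and with softer tools: it first establishes (3) for the limit $u$ by integrating Laplace's equation and the Bochner identity over annuli $\mathcal{E}_l^{t_m,t_n}$ to obtain $\sum_{a=1}^4\|\overline{\nabla}^a u\|_{L^2(\mathcal{E}_l)}<\infty$, then uses Sobolev embedding on unit annuli for pointwise $o(1)$ decay of derivatives, and finally Poincar\'e plus the maximum principle to extract the constants $c_l$. Property (2) is then deduced from (3) by comparing $u_n$ to $u$ on a slowly receding slice $\mathcal{E}_l^{t(n)}$ and applying standard harmonic estimates between that slice and the cap. This route needs only the qualitative $o(1)$ asymptotics of \eqref{101} and no spectral gap.
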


\begin{remark}
If the apparent horizons over which the Jang graph blows-up are strictly stable, then a standard eigenfunction expansion of the harmonic functions along the cylindrical ends would easily establish the above result with exponential decay. However, in the current setting strict stability is not known, and thus we must utilize a completely different approach that is presented below.
\end{remark}

\begin{proof}
We begin by showing that $u_t$ admits uniform $C^{2,\alpha}$ bounds on compact subsets. To accomplish this
some notation and preliminary constructions are needed. For each radius $r$ let $\overline{M}_{t,r}$ be the metric completion of the bounded component of $\overline{M}_t \setminus S_r$ where $S_r \subset\mathcal{E}_0$ is a coordinate sphere, and denote by $\widetilde{M}_{t,r}$ the corresponding manifold with the caps $\{\widetilde{B}_l\}_{l=1}^{l_0}$ added.
A straightforward calculation \cite{HKK}*{Section 4} shows there is a sufficiently large radius $r_0$ such that
\begin{equation}\label{eq:r1def}
\Delta_{\overline{g}} r^{-1/2}<-\frac18 r^{-5/2}\qquad \text{ for } r\geq r_0. 
\end{equation}
Next observe that a slight generalization of \cite{Bartnik}*{Theorem 3.1} produces a solution to the Dirichlet problem
\begin{equation}
\Delta_{\overline{g}}v_0=0 \quad\text{ in }\overline{M}_t \setminus \overline{M}_{t,2r_0},\quad\quad v_0=0 \quad\text{ on }S_{2r_0},\quad\quad v_0=a_i x^i +O_2(r^{1-\overline{q}})\quad\text{ as }r\rightarrow\infty.
\end{equation}
Let $\eta$ be a radial cut-off function with $\eta=1$ outside $S_{3r_0}$ and $\eta=0$ inside $S_{2r_0}$, and set $v=\eta v_0$. 
Notice that $v$ may be extended in an obvious way to all of $\widetilde{M}_t$, and that $\Delta_{\widetilde{g}_t}v$
is supported on the annular region between $S_{2r_0}$ and $S_{3r_0}$. This function will serve as an `anchor approximate solution' for equation \eqref{hfunctions}.
    
To obtain a uniform barrier, let $\lambda>0$ be a parameter and consider the radial continuous function
\begin{equation}
\psi_{\lambda}=
\begin{cases}
\lambda r^{-1/2}&\text{ in } \widetilde{M}_t\setminus \widetilde{M}_{t,r_0}\\
\lambda r_0^{-1/2}&\text{ in }\widetilde{M}_{t,r_0}
\end{cases}.
\end{equation}
According to \eqref{eq:r1def}, the parameter $\lambda$ may be chosen sufficiently large so that $\Delta\psi_{\lambda}\leq-\Delta v$ in the weak sense. For $r\geq r_0$ and $t\geq t_0$, consider the solution of the Dirichlet problem
\begin{equation}
\Delta_{\widetilde{g}_t} w_{t,r}=-\Delta_{\widetilde{g}_t} v\quad \text{ in }\widetilde{M}_{t,r},\quad\quad\quad
w_{t,r}=0\quad \text{ on }\partial\widetilde{M}_{t,r}=S_r.
\end{equation}
Since $\psi_{\lambda}$ is a supersolution the maximum principle implies that $|w_{t,r}|<\psi_{\lambda}$, which is independent of $t$ and $r$. The harmonic functions $u_{t,r}=v+w_{t,r}$ then admit uniform pointwise bounds on compact subsets, and elliptic theory then gives uniform $C^{\mathrm{k},\alpha}$ estimates on compact subsets for these functions. A standard diagonal argument with exhausting domains produces subsequential convergence, as $r\rightarrow\infty$, to harmonic functions $u_t$ satisfying \eqref{hfunctions}. It follows that for any precompact $\Omega_t\subset \widetilde{M}_t$ of uniformly bounded geometry, there exist a constants $C_{\mathrm{k}}$ independent of $t$ such that 
\begin{equation}\label{oanfoia355nifnihh}
|u_t|_{C^{\mathrm{k},\alpha}(\Omega_t)}\leq C_\mathrm{k}. 
\end{equation}
Similarly, we may find a subsequence of levels $t_n \rightarrow\infty$ such that $u_{t_n}\rightarrow u$ in $C^{2,\alpha}_{loc}(\overline{M})$ for any $\alpha\in(0,1)$, where $u$ is a harmonic function on $(\overline{M},\overline{g})$ that admits the asymptotics of \eqref{hfunctions} in $\mathcal{E}_0$. This establishes $(1)$.

In what follows we will write $u_n$ in place of $u_{t_n}$ for convenience. 
Let $t_m < t_n$ and set $\mathcal{E}^{t_m ,t_n}_l \subset\mathcal{E}_l$ to be the annular region bounded by the cross-sections $\mathcal{E}_l^{t_m}$ and $\mathcal{E}_l^{t_n}$, which we assume is nonempty. Since $u_n$ is harmonic, we may multiply Laplace's equation by $u_n$ and integrate by parts while utilizing \eqref{oanfoia355nifnihh} to find
\begin{equation}\label{iwj09u3q09hjiohj}
\|\overline{\nabla} u_n\|_{L^2(\mathcal{E}_l^{t_m, t_n})}^2 =\int_{\partial\mathcal{E}_l^{t_m ,t_n}}u_n \overline{\nabla}_{\upsilon}u_n d\overline{A}\leq C,
\end{equation}
where here and below $C$ represents a constant independent of $m$, $n$ and $\upsilon$ is the unit outer normal to the boundary. To bound the second derivative, multiply Laplace's equation by $\Delta_{\overline{g}}u_n$ and integrate by parts 
\begin{equation}\label{eq-L^2 Hessian Bound}
\| \overline{\nabla}^2 u_n \|_{L^2(\mathcal{E}_l^{t_m, t_n})}^2=-\int_{\mathcal{E}_l^{t_m, t_n}}\mathrm{Ric}_{\overline{g}}(\overline{\nabla}u_n,\overline{\nabla}u_n)d\overline{V}-\int_{\partial\mathcal{E}_l^{t_m, t_n}}\overline{\nabla}^2 u_n(\overline{\nabla}u_n,\upsilon)d\overline{A}
\leq C,
\end{equation}
where in the last inequality we utilized \eqref{101}, \eqref{oanfoia355nifnihh}, as well as \eqref{iwj09u3q09hjiohj}. Employing similar but tedious computations, we may boot-strap to obtain the same control over all higher order derivatives. Thus by sending $n\rightarrow\infty$, Fatou's lemma implies
\begin{equation}\label{aijt9q3ju-90yuj}
\sum_{a=1}^{4}\| \overline{\nabla}^a u\|_{L^2(\mathcal{E}_l)}\leq C.
\end{equation}
By Sobolev embedding on $\mathcal{E}_l^{(t-1),t}$ we then have $|\overline{\nabla}^{a}u|=o(1)$, $a=1,2$ as $|t|\rightarrow \infty$ in $\mathcal{E}_{l}$.

To show that $u$ converges to a constant along each cylindrical end, consider a sequence of annuli $\mathcal{E}_l^{(t_n -1),t_n}$ and apply the Poincar\'{e} inequality to find constants $c_{ln}$ such that
\begin{equation}
\int_{\mathcal{E}_l^{(t_n -1),t_n}}(u-c_{ln})^2 d\overline{V}\leq C\int_{\mathcal{E}_l^{(t_n -1),t_n}}|\overline{\nabla}u|^2 d\overline{V},
\end{equation}
where $C$ is independent of $n$. The constants $c_{ln}$, which are the average value of $u$ over the annuli, are uniformly bounded by \eqref{oanfoia355nifnihh} and thus admit a convergent subsequence $c_{ln_{i}} \rightarrow c_l$. In light of \eqref{aijt9q3ju-90yuj}, Sobolev embedding shows that 
\begin{equation}
\sup_{\mathcal{E}_l^{(t_{n_i} -1),t_{n_i}}}|u-c_l|=o(1)\quad\quad\text{ as }i\rightarrow\infty. 
\end{equation}
This allows for an application of the maximum principle on the intervening annuli $\mathcal{E}_l^{t_{n_i},t_{n_{j}}}$ with $t_{n_i}<t_{n_{j}}$ yields the full limit $|u-c_l|=o(1)$ as $|t|\rightarrow\infty$. This, along with the conclusions of the previous paragraph, confirms $(3)$. 

Lastly, let $\{t(n)\}$ be a sequence with $t(n)<t_n$ which tends to infinity sufficiently slowly to ensure that $|u-u_{n}|=o(1)$ when restricted to $\mathcal{E}_l^{t(n)}$, and similarly for the difference of all derivatives. Then this sequence of functions satisfies the following boundary value problem 
\begin{equation}
\Delta_{\widetilde{g}_{t_{n}}}u_{n}=0\quad\text{ in }\mathcal{E}_{l}^{t(n), t_n} \cup \widetilde{B}_l ,\quad\quad|\overline{\nabla}^a(u_{n}-c_l)|=o(1) \quad\text{ on }\partial(\mathcal{E}_{l}^{t(n), t_n} \cup \widetilde{B}_l)=\mathcal{E}_l^{t(n)},
\end{equation}
for $a=0,1,2$ where the triangle inequality was used to obtain the boundary condition. Standard estimates for harmonic functions then yield the desired property $(2)$. 
\end{proof}

\subsection{Mass lower bounds on the Jang graph}

We will now estimate the mass of the Jang manifold $(\overline{M},\overline{g})$ produced by Proposition \ref{t:modifiedjanggraph} in terms of harmonic functions. The idea is based upon \cite{BKKS} where the Riemannian positive mass theorem was established for manifolds with minimal boundary. In the present context two technical difficulties arise, namely the Jang scalar curvature may take large negative values, and level sets of the relevant harmonic functions may have spherical components within the cylindrical ends of $\overline{M}$. Such components contribute in an undesirable way to the Euler characteristic of level sets, potentially leading to an ineffective mass bound. To overcome these problems we exploit the divergence structure present in the Jang scalar curvature, and additionally utilize a capping-off procedure for the cylindrical ends which involves a preliminary estimate before sending the capping locations to infinity. 

\begin{theorem}\label{t:massinequality}
Let $(\overline{M},\overline{g})$ be a `Jang manifold' produced by Proposition \ref{t:modifiedjanggraph}, and associated with
an initial data set $(M,g,k)$ which is either asymptotically flat of order $q>1/2$ or asymptotically hyperboloidal, and satisfies the dominant energy condition. Then there exist harmonic functions $u^1$, $u^2$, $u^3$ on $\overline{M}$ which form an asymptotically flat coordinate system on $\mathcal{E}_0$ and satisfy
\begin{equation}\label{eq:jangmassineq}
E(\mathcal{E}_0)\geq \frac{\overline{C}}{24\pi}\int_{\overline{M}}\frac{|\overline{\nabla}^2 u^i|^2}{|\overline{\nabla} u^i|}d\overline{V},
\end{equation}
for $i=1,2,3$ where $\overline{C}=1$ if $M$ is asymptotically flat and $\overline{C}=2$ if $M$ is asymptotically hyperboloidal.
\end{theorem}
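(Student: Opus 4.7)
The plan is to apply the harmonic-function mass inequality of \cite{BKKS} on the capped Riemannian manifolds $(\widetilde{M}_{t_n},\widetilde{g}_{t_n})$ constructed in the paragraph preceding Lemma \ref{harmoniclemma}, and then pass to the limit $n\to\infty$. For each coordinate direction $i=1,2,3$, I would take the harmonic function $u^i_n$ on $\widetilde{M}_{t_n}$ with asymptotics $u^i_n=x^i+O_2(r^{1-\overline{q}})$ in $\mathcal{E}_0$ furnished by Lemma \ref{harmoniclemma}. Feeding the Bochner identity $\tfrac{1}{2}\Delta|\widetilde{\nabla}u^i_n|^2=|\widetilde{\nabla}^2 u^i_n|^2+\mathrm{Ric}_{\widetilde{g}_{t_n}}(\widetilde{\nabla}u^i_n,\widetilde{\nabla}u^i_n)$ into the Gauss equation along regular level sets, combined with computing the flux of $\widetilde{\nabla}|\widetilde{\nabla}u^i_n|$ through large coordinate spheres in $\mathcal{E}_0$, should produce a BKKS-type inequality of the schematic form
\begin{equation*}
8\pi E(\mathcal{E}_0)\;\geq\;\int_{\widetilde{M}_{t_n}}\!\left(\frac{|\widetilde{\nabla}^2 u^i_n|^2-\bigl|\widetilde{\nabla}|\widetilde{\nabla}u^i_n|\bigr|^2}{|\widetilde{\nabla}u^i_n|}+\frac{R_{\widetilde{g}_{t_n}}}{2}|\widetilde{\nabla}u^i_n|\right)d\widetilde{V}-\mathcal{R}_n,
\end{equation*}
where $\mathcal{R}_n\geq 0$ collects a Gauss--Bonnet contribution coming from regular level sets with positive Euler characteristic.

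\textbf{Handling negative scalar curvature and the caps.} The central obstruction is that $R_{\overline{g}}$ may be very negative, so next I would feed in property (4) of Proposition \ref{t:modifiedjanggraph}, $R_{\overline{g}}\geq 2|X|^2-2\mathrm{div}_{\overline{g}}X$. Multiplying by $|\overline{\nabla}u^i_n|/2$ and integrating by parts on $\overline{M}_{t_n}\subset\widetilde{M}_{t_n}$ yields
\begin{equation*}
\tfrac{1}{2}\!\int_{\overline{M}_{t_n}}\!\!R_{\overline{g}}|\overline{\nabla}u^i_n|d\overline{V}\;\geq\;\int_{\overline{M}_{t_n}}\!\!\bigl(|X|^2|\overline{\nabla}u^i_n|+\langle X,\overline{\nabla}|\overline{\nabla}u^i_n|\rangle\bigr)d\overline{V}-\!\int_{\partial\overline{M}_{t_n}}\!\!\langle X,\nu\rangle|\overline{\nabla}u^i_n|d\overline{A}.
\end{equation*}
Applying Cauchy--Schwarz with parameter to the mixed term, followed by Kato's inequality $\bigl|\overline{\nabla}|\overline{\nabla}u^i_n|\bigr|^2\leq|\overline{\nabla}^2 u^i_n|^2$, should cancel the $|X|^2|\overline{\nabla}u^i_n|$ contribution and leave a positive multiple of $\int\frac{|\overline{\nabla}^2 u^i_n|^2}{|\overline{\nabla}u^i_n|}d\overline{V}$ on the right-hand side. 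On the caps $\widetilde{B}_l\subset\widetilde{M}_{t_n}$, I would use Lemma \ref{harmoniclemma}(2) to conclude $|\widetilde{\nabla}u^i_n|\to 0$ uniformly, combined with standard elliptic estimates for harmonic functions whose boundary data on $\partial\widetilde{B}_l$ converge in $C^2$ to the constant $c_l$, to force $|\widetilde{\nabla}^2 u^i_n|\to 0$ as well; this makes the entire cap contribution vanish in the limit.

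\textbf{Asymptotic boundary terms, Euler surplus, and passage to the limit.} The boundary term at infinity in $\mathcal{E}_0$ is $o(1)$ in the asymptotically flat regime because $|X|=O(r^{-1-2q})$ with $q>1/2$, which accounts for the constant $\overline{C}=1$. In the asymptotically hyperboloidal regime, Proposition \ref{t:modifiedjanggraph}(5) gives $\int_{S_r}\langle X,\nu\rangle d\overline{A}\to -4\pi E(\mathcal{E}_0)$, and combined with $|\overline{\nabla}u^i_n|\to 1$ this boundary flux contributes a further positive multiple of $E(\mathcal{E}_0)$ to the left-hand side, producing $\overline{C}=2$. The hard part will be controlling the Euler characteristic surplus $\mathcal{R}_n$, since regular level sets $\{u^i_n=s\}$ can acquire extra sphere components inside the cylindrical ends $\mathcal{E}_l$, each contributing a spurious $4\pi$ via Gauss--Bonnet. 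Here the plan is to exploit Lemma \ref{harmoniclemma}(3), which gives $u^i_n\to c_l$ uniformly on any fixed portion of $\mathcal{E}_l$: the problematic slice values $s$ concentrate in a shrinking neighborhood of each $c_l$ as $n\to\infty$, and on that neighborhood the weight $|\widetilde{\nabla}u^i_n|$ in the coarea decomposition of the bulk integrals is $o(1)$, forcing $\mathcal{R}_n=o(1)$. A final application of Fatou's lemma to the Hessian integral, justified by the $C^{2,\alpha}_{loc}$ convergence $u^i_n\to u^i$ of Lemma \ref{harmoniclemma}(1), would then yield \eqref{eq:jangmassineq} with the stated constants $\overline{C}/24\pi$.
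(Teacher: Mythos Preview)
Your overall architecture---approximate by the capped manifolds $\widetilde{M}_{t_n}$, apply a BKKS-type inequality to the harmonic $u^i_n$, absorb the negative part of $R_{\overline{g}}$ via the divergence structure $R_{\overline{g}}\geq 2|X|^2_{\overline{g}}-2\,\mathrm{div}_{\overline{g}}X$ and integration by parts, and pass to the limit with Fatou---is exactly the paper's strategy. Two of your steps, however, do not go through as written.

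\textbf{The Euler surplus.} Your proposed control of $\mathcal{R}_n$ via ``the weight $|\widetilde{\nabla}u^i_n|$ in the coarea decomposition'' does not work: the Gauss--Bonnet contribution in the Stern/BKKS identity, after coarea, is $\int 2\pi\chi(\Sigma_s)\,ds$ with \emph{no} $|\nabla u|$ factor remaining, so smallness of the gradient on the cylinders is irrelevant to this term. Nor does Lemma~\ref{harmoniclemma}(3) (which concerns the limit $u$, not $u_n$) force the range of $u_n$ on the whole cylindrical region into a shrinking interval. The paper avoids this difficulty entirely, and in fact this is the whole point of the capping: once $H_2(\widetilde{M}_{t_n};\mathbb{Z})=0$, any closed component of a regular level set of the harmonic $u_n$ would bound a region on which the maximum principle forces $u_n$ to be constant, a contradiction. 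Hence every regular level set is a single noncompact surface with $\chi\leq 1$, there is no surplus at all, and \cite{BKKS}*{(6.28)} applies directly, already yielding the integrand $\tfrac{|\widetilde{\nabla}^2 u_n|^2}{|\widetilde{\nabla}u_n|}+R_{\widetilde{g}_n}|\widetilde{\nabla}u_n|$ with left side $16\pi E$.

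\textbf{Kato and the constant.} With your schematic starting inequality (numerator $|\widetilde{\nabla}^2 u^i_n|^2-\bigl|\widetilde{\nabla}|\widetilde{\nabla}u^i_n|\bigr|^2$, left side $8\pi E$, scalar curvature with coefficient $\tfrac12$) together with the \emph{ordinary} Kato inequality you quote, the Cauchy--Schwarz absorption of $X\cdot\overline{\nabla}|\overline{\nabla}u^i_n|$ actually leaves a \emph{negative} multiple of $\int\tfrac{|\overline{\nabla}^2 u^i_n|^2}{|\overline{\nabla}u^i_n|}$, so no useful bound results. The paper instead starts from the sharper \cite{BKKS}*{(6.28)} (full Hessian term, left side $16\pi E$) and then uses the \emph{refined} Kato inequality for harmonic functions, $\bigl|\overline{\nabla}|\overline{\nabla}u|\bigr|^2\leq\tfrac{2}{3}|\overline{\nabla}^2 u|^2$, in the Young-inequality step; this is what produces the factor $\tfrac{2}{3}$ and hence the stated constant $\tfrac{1}{24\pi}$. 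Finally, in the hyperboloidal case the paper does not simply read off a boundary flux of $X$ at infinity: because $R_{\overline{g}}\notin L^1(\mathcal{E}_0)$ in that regime, it first trades the coordinate-cylinder $C_r$ for a coordinate sphere $S_{2r}$ using the integrability of $R_{\overline{g}}+2\,\mathrm{div}_{\overline{g}}X$, and only then collects the flux $\int_{S_{2r}}2\langle X,\nu\rangle\,d\overline{A}\to -8\pi E(\mathcal{E}_0)$ that yields $\overline{C}=2$.
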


\begin{remark}
When $M$ is asymptotically hyperboloidal, as in Remark \ref{remark111} the ADM energy is calculated with respect to a specific asymptotic coordinate system due to potential lack of integrability of the Jang scalar curvature. Furthermore, note that 
the integrand of \eqref{eq:jangmassineq} is defined almost everywhere since the critical set of a harmonic function is always of measure zero (see \cite{Hardt}*{Theorem 1.1}).
\end{remark}

\begin{proof}
We will first treat the case in which $M$ is asymptotically flat. Let $u^i$, $i=1,2,3$ be the harmonic functions on $(\overline{M},\overline{g})$ given by Lemma \ref{harmoniclemma} which asymptote to $x^i$ in the asymptotically flat end $\mathcal{E}_0$. Set $u=u^1$, and note that $U=(u^1,u^2,u^3)$ form harmonic coordinates in $\mathcal{E}_0$. For each large $r>0$ consider the coordinate cylinders $C_r \subset\mathcal{E}_0$ given by
\begin{equation}
C_r=\{U\mid |U|\leq r, \text{ }(u^2)^2+(u^3)^2=r^2 \}
\cup\{U\mid |U|=r, \text{ }(u^2)^2+(u^3)^2 \leq r^2\}.
\end{equation}
By Lemma \ref{harmoniclemma} there exists a sequence of harmonic functions $u_{t_n}$ defined on the capped-off Jang manifolds $(\widetilde{M}_{t_n},\widetilde{g}_{t_n})$ and satisfying \eqref{hfunctions}, with $\vec{a}=(1,0,0)$; for convenience we will denote the functions by $u_n$ and the manifolds by $(\widetilde{M}_n,\widetilde{g}_n)$. Furthermore, let $\overline{M}_{nr}$ be the region bounded by the levels $\mathcal{E}_{l}^{\pm t_n}$, $l=1,\dots,l_0$ and the coordinate cylinder $C_r$, and let $\widetilde{M}_{nr}=\overline{M}_{nr}\cup_{l=1}^{l_0}\widetilde{B}_l$ be the corresponding capped-off manifold. According to Proposition \ref{t:modifiedjanggraph} (1), it holds that $H_2(\widetilde{M}_{n\infty};\mathbb{Z})=0$. Therefore 
\cite{BKKS}*{(6.28)} may be applied to find 
\begin{equation}\label{e:prelimBKKS}
\int_{C_r}\sum_i (\overline{g}_{ij,i}-\overline{g}_{ii,j})\nu^{j}d\overline{A}\geq\int_{\widetilde{M}_{nr}}\left(\frac{|\widetilde{\nabla}^2 u_n|^2}{|\widetilde{\nabla} u_n|}+R_{\widetilde{g}_n}|\widetilde{\nabla} u_n|\right)d\widetilde{V}+o(1), 
\end{equation}
where $\nu$ is the unit outer normal to $C_r$ and $o(1)$ concerns the limit as $r\rightarrow\infty$.

We will now estimate the right-hand side of \eqref{e:prelimBKKS}. Separating out the scalar curvature of the caps, utilizing the scalar curvature lower bound of Proposition \ref{t:modifiedjanggraph} $(4)$, and integrating by parts produces
\begin{align}\label{eq:massformulaIBP2}
\begin{split}
\int_{C_r}\sum_i (&\overline{g}_{ij,i}-\overline{g}_{ii,j})\nu^{j}d\overline{A}\\
&\geq\int_{\overline{M}_{nr}}\left(\frac{|\overline{\nabla}^2 u_n|^2}{|\overline{\nabla} u_n|}+2(|X|_{\overline{g}}^2-\mathrm{div}_{\overline{g}}X)|\overline{\nabla} u_n|\right)d\overline{V}+\sum_{l=1}^{l_0}\int_{\widetilde{B}_l}R_{\widetilde{g}_n}|\widetilde{\nabla} u_n|d\widetilde{V}\\
&\geq \int_{\overline{M}_{nr}}\left(\frac{|\overline{\nabla}^2 u_n|^2}{|\overline{\nabla} u_n|}+2|X|^2_{\overline{g}}|\overline{\nabla}u_n|+2\langle X,\overline{\nabla}|\overline{\nabla} u_n|\rangle\right) d\overline{V}\\
{}&\quad+\sum_{l=1}^{l_0}\underbrace{\int_{\widetilde{B}_l}R_{\widetilde{g}_n}|\overline{\nabla} u_n|d\overline{V}}_{\mathcal{I}^1_{ln}}
-\sum_{l=1}^{l_0}\underbrace{\int_{\mathcal{E}_l^{\pm t_{n}}}\langle X,\upsilon\rangle|\overline{\nabla}u_n| d\overline{A}}_{\mathcal{I}^2_{ln}}
-\underbrace{\int_{C_r}\langle X,\nu\rangle|\overline{\nabla}u_n| d\overline{A}}_{\mathcal{I}^3_{nr}},
\end{split}
\end{align}
where $\nu$ and $\upsilon$ are unit normals pointing out of $\overline{M}_{nr}$. Observe that the uniformly bounded geometry within the caps $\widetilde{B}_l$, together with the decay of Lemma \ref{harmoniclemma} $(2)$, implies that $\mathcal{I}^1_{ln}\rightarrow 0$ as $n\rightarrow\infty$. Similarly, the uniform bounds on $|X|_{\overline{g}}$ along the cylindrical ends given by Proposition \ref{t:modifiedjanggraph} $(5)$ yields $\mathcal{I}^2_{ln}\rightarrow 0$. Furthermore, according to the proof of Lemma \ref{harmoniclemma} the sequence of functions $|\nabla u_n|$ is bounded independent of $n$ and $r$ while Proposition \ref{t:modifiedjanggraph} $(5)$ gives $|X|_{\overline{g}}=O(r^{-1-2q})$, and therefore $\mathcal{I}^3_{nr}\rightarrow 0$ as $r\rightarrow\infty$ uniformly in $n$. Next note that Young's inequality and the refined Kato inequality (see for instance \cite{HKK}*{(3.11)}) for harmonic functions produce the following inequality almost everywhere:
\begin{equation}\label{8hr098qh8thyu}
2|\langle X,\overline{\nabla}|\overline{\nabla} u_n|\rangle|\leq\frac{1}{2}\frac{|\overline{\nabla}|\overline{\nabla}u_n||^2}{|\overline{\nabla}u_n|}
+2|X|^2_{\overline{g}}|\overline{\nabla}u_n|\leq \frac{1}{3}\frac{|\overline{\nabla}^2 u_n|^2}{|\overline{\nabla}u_n|}
+2|X|^2_{\overline{g}}|\overline{\nabla}u_n|.
\end{equation}
Altogether this yields
\begin{equation}
\int_{C_r}\sum_i (\overline{g}_{ij,i}-\overline{g}_{ii,j})\nu^{j}d\overline{A}\geq
\frac{2}{3}\int_{\overline{M}_{nr}}\frac{|\overline{\nabla}^2 u_n|^2}{|\overline{\nabla} u_n|}d\overline{V}+o(1).
\end{equation}
By first taking the limit as $r\rightarrow\infty$ and then the liminf as $n\rightarrow\infty$, the desired result for $u=u^1$ follows from \eqref{fpajfpojapojfpoqhhh} and Fatou's lemma. The corresponding inequalities for $u^2$ and $u^3$ are proved in the same manner.
 
Consider now the case in which $M$ is asymptotically hyperboloidal. In this setting one may follow the preceding arguments, although some differences occur due to the weaker fall-off present in the Jang triple. In particular, since the Jang scalar curvature is not necessarily integrable, we must specify the asymptotically flat chart that is being used to compute the ADM energy, namely that employed in \cite{Sakovich}*{Corollary 6.11}. Denote such coordinates by $x^i$, and use them to solve \eqref{hfunctions} to produce the harmonic functions $(u^1,u^2,u^3)$ given by Lemma \ref{harmoniclemma}.
Note that since $|u^i-x^i|=O_2(r^{1-\overline{q}})$ and $\overline{q}>1/2$, either set of asymptotic coordinates $x^i$ or $u^i$ may be used to compute the energy and obtain the same value. Now set $u=u^1$, let $u_n$ be the corresponding approximating sequence, and apply \eqref{eq:massformulaIBP2} as well as \eqref{8hr098qh8thyu} to obtain
\begin{align}\label{309gian9y0-ghig}
\begin{split}
\int_{C_r}\sum_i (\overline{g}_{ij,i}-\overline{g}_{ii,j})\nu^{j}d\overline{A}\geq
\frac{2}{3}\int_{\overline{M}_{nr}}\frac{|\overline{\nabla}^2 u_n|^2}{|\overline{\nabla} u_n|}d\overline{V}+\sum_{l=1}^{l_0}\left(\mathcal{I}^1_{ln}-\mathcal{I}^2_{ln}\right)
-\mathcal{I}^3_{nr}.
\end{split}
\end{align}
For the same reasons given above, it holds that $\mathcal{I}^1_{ln}-\mathcal{I}^2_{ln}\rightarrow 0$ as $n\rightarrow \infty$. However, since $|X|_{\overline{g}}=O(r^{-2})$ according to Proposition \ref{t:modifiedjanggraph} $(5)$, the boundary integral 
\begin{equation}
\mathcal{I}^3_{nr}=\int_{C_r}\langle X,\nu\rangle|\overline{\nabla}u_n| d\overline{A}=\int_{C_r}\langle X,\nu\rangle d\overline{A}+o(1)
\end{equation}
does not tend to zero. To find its contribution, recall the following expression for scalar curvature in local coordinates
\begin{equation}
R_{\overline{g}}=\partial_j \left(\overline{g}^{ij}\overline{g}^{\mathrm{k}l}(\overline{g}_{li,\mathrm{k}}-\overline{g}_{l\mathrm{k},i})\right)+Q(\partial g),
\end{equation} 
where $Q(\partial \overline{g})$ is quadratic in first derives of the metric. Then working in the harmonic coordinates $u^i$ in $\mathcal{E}_0$ we have $Q(\partial \overline{g})=O(r^{-2-2\overline{q}})=o(r^{-3})$, and therefore integrating by parts on the region $D_r$ bounded by $C_r$ and the coordinate sphere $S_{2r}$ produces
\begin{align}
\begin{split}
\int_{S_{2r}}\sum_i& (\overline{g}_{ij,i}-\overline{g}_{ii,j})\nu^{j}d\overline{A}-\int_{C_r}\sum_i (\overline{g}_{ij,i}-\overline{g}_{ii,j})\nu^{j}d\overline{A}\\
{}&=\int_{D_r}R_{\overline{g}} d\overline{V}+o(1)\\
{}&=\int_{D_r}\left(R_{\overline{g}}+2\mathrm{div}_{\overline{g}}X \right) d\overline{V}-\int_{S_{2r}}2\langle X,\nu\rangle d\overline{A}+\int_{C_r}2\langle X,\nu\rangle d\overline{A}+o(1)\\
{}&=-\int_{S_{2r}}2\langle X,\nu\rangle d\overline{A}+\int_{C_r}2\langle X,\nu\rangle d\overline{A}+o(1),
\end{split}
\end{align}
where in the last equality we have used the integrability of $R_{\overline{g}}+2\mathrm{div}_{\overline{g}}X$ provided by Proposition \ref{t:modifiedjanggraph} $(4)$. Combining these observations with \eqref{309gian9y0-ghig} gives rise to
\begin{equation}
\int_{S_{2r}}\sum_i (\overline{g}_{ij,i}-\overline{g}_{ii,j})\nu^{j}d\overline{A}+\int_{S_{2r}}2\langle X,\nu\rangle d\overline{A}\geq\frac{2}{3}\int_{\overline{M}_{nr}}\frac{|\overline{\nabla}^2 u_n|^2}{|\overline{\nabla} u_n|}d\overline{V}+o(1).
\end{equation}
Notice that the flux of $2X$ converges to $-8\pi E(\mathcal{E}_0)$ as $r\rightarrow\infty$, according to Proposition \ref{t:modifiedjanggraph} $(5)$. Hence,
by first taking the limit as $r\rightarrow\infty$ and then the liminf as $n\rightarrow\infty$, the desired result for $u=u^1$ follows from \eqref{fpajfpojapojfpoqhhh} and Fatou's lemma. The corresponding inequalities for $u^2$ and $u^3$ are proved in the same manner.
\end{proof}

\section{Dong-Song's Universal Poincar\'{e}-Faber-Szeg\"{o} Inequality}
\label{s:DSinequality}
\setcounter{equation}{0}
\setcounter{section}{4}

In \cite{Dong-Song}*{Section 3} Dong-Song established an inequality, corresponding to Theorem \ref{thm-Area Hessian Inequality} below, which is reminiscent of the classical Poincar\'{e}-Faber-Szeg\"{o} inequality \cite{PolyaSzego}
\begin{equation}\label{oinfaininghhhh}
\frac{\mathrm{Cap}(\Omega)}{n(n-2)\omega_n}\geq \left(\frac{\mathrm{Vol}(\Omega)}{\omega_n}\right)^{\frac{n-2}{n}}, 
\end{equation}
for bounded domains $\Omega\subset\mathbb{R}^n$. Here $\omega_n$ denotes the volume of the unit Euclidean $n$-ball, which up to scaling is the unique domain that saturates \eqref{oinfaininghhhh}, and the capacity of $\Omega$ is given by
\begin{equation}
\capacity(\Omega)=\inf \left \{\int_{\R^n}|\nabla f|^2 dV \Bigm|  f \in \mathrm{Lip}(\mathbb{R}^n), \text{ }\|f\|_{H^1}<\infty,\text{ }f|_{\Omega}\geq1 \right \}. 
\end{equation}
The inequality of Dong-Song is obtained under the assumption of small ADM mass. In this section, we make the observation that their inequality is still valid without this condition. In particular, the strategy presented below follows closely that of \cite{Dong-Song} with additional expounding of details, and careful tracking of constants for eventual application to the Penrose inequality.


Throughout this section, we keep the following setup. Let $(M,g)$ be a complete Riemannian 3-manifold 
equipped with $u^i\in C^\infty(M)$, $i= 1,2,3$, and define a  function $F\in C^{\infty}(M)$ by
\begin{align}\label{e:Fdef}
F=\sum_{i,j=1}^3 \left(\langle \nabla u^i,\nabla u^j\rangle-\delta^{ij}\right)^2.
\end{align}
For values $a<b$, we will study the sets 
\begin{align}
\Omega_{a,b}= F^{-1}([a,b]).   
\end{align}
When $b$ is small, $\Omega_{0,b}$ consists of the points where the map $U(x)=(u^1(x),u^2(x),u^3(x))$ is almost a local isometry between $M$ and $\mathbb{R}^3$, see Lemma \ref{lem-U Local Diffeomorphism}. 
In what follows we always assume that $F$ is nonconstant, and also that $F$ is proper on $F^{-1}((0,b))$ for small enough $b$, properties which are justified in the context of our main results by Lemma \ref{lem-Level Set Properties}.
According to this assumption, $F^{-1}(t)$ and $\Omega_{s,t}$ have finite area and volume for sufficiently small regular values $s<t$ of $F$. Here and throughout, we use $B^{\mathbb{R}^3}_\rho$ to denote the Euclidean ball of radius $\rho$ centered at the origin of $\mathbb{R}^3$. The following is the main result of this section.



\begin{theorem}\label{thm-Area Hessian Inequality}
Let $\ell \ge 6$, $0<(\ell-1)\varepsilon<\frac{1}{16}$, $0< \eta<1$, and $\lambda>0$. Then there exists a regular value $\tau_0 \in (0,\ell \varepsilon)$ of $F$ such that
\begin{align}\label{eq:areahessianinequality}
|F^{-1}(\tau_0)| \le  
\mathbf{C}_{\ell,\varepsilon,\eta,\lambda}\left(\sum_{i=1}^3\int_M\frac{|\nabla^2 u^i|^2}{|\nabla u^i|}dV\right)^2,
\end{align}
where $\mathbf{C}_{\ell,\varepsilon,\eta,\lambda}$ is a constant given explicitly in \eqref{eq:finalareaineq}.
\end{theorem}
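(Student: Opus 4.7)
The plan is to follow the strategy of Dong--Song, exploiting the fact that on the sublevel set $\Omega_{0, \ell \varepsilon}$ the map $U = (u^1, u^2, u^3) : M \to \mathbb{R}^3$ is nearly a local isometry, so that Euclidean capacity and Poincar\'{e}-Faber-Szeg\"{o} techniques can be transferred back to $M$. The shape of the target inequality $|F^{-1}(\tau_0)| \leq \mathbf{C}(\text{energy})^2$ mirrors the classical three-dimensional capacitor inequality $|\partial\Omega|^{1/2} \leq C \cdot \capacity(\Omega)$, so the broad strategy is to manufacture a capacitor on $M$ whose energy is controlled by the Hessian energy and whose "outer plate" area is comparable to $|F^{-1}(\tau_0)|$.

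First I would verify that on $\Omega_{0, \ell\varepsilon}$, with the hypothesis $(\ell - 1)\varepsilon < 1/16$, the matrix $dU$ is close to an orthogonal frame: entrywise $|\langle \nabla u^i, \nabla u^j\rangle - \delta^{ij}| \leq (\ell\varepsilon)^{1/2}$, so each $|\nabla u^i|$ and $|\det dU|$ stays close to $1$, and $U$ is a local diffeomorphism. Writing $\gamma_{ij} = \langle \nabla u^i, \nabla u^j\rangle - \delta^{ij}$ so $F = |\gamma|^2$, a direct product-rule computation gives $|\nabla F|^2 \leq CF \sum_i |\nabla^2 u^i|^2$ on this set, which by the lower bound on $|\nabla u^i|$ can be rewritten as $|\nabla F|^2 \leq CF \sum_i |\nabla^2 u^i|^2 / |\nabla u^i|$.

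The core estimate would then come from a capacitor construction. Set $\phi = f(F)$ for a carefully chosen Lipschitz cutoff $f : [0, \ell\varepsilon] \to [0,1]$ equal to $1$ near $0$ and vanishing past a regular value $\tau_0$ still to be chosen. Then $\phi$ is a valid test function for a relative capacity separating the almost-isometric core $F^{-1}(0)$ from the complement of $\Omega_{0, \tau_0}$, and the preceding gradient bound yields
\begin{equation*}
\int_M |\nabla \phi|^2 \, dV \leq C(\ell, \varepsilon, \eta, \lambda) \sum_i \int_M \frac{|\nabla^2 u^i|^2}{|\nabla u^i|} \, dV.
\end{equation*}
Pushing $\phi$ forward by $U$ to $\mathbb{R}^3$, where $U$ is a local diffeomorphism with Jacobian close to $1$, the classical Poincar\'{e}-Faber-Szeg\"{o} inequality in Euclidean space controls the area of $\partial U(\Omega_{0, \tau_0})$ by the square of the capacity; transferring back via $U$ yields a bound of $|F^{-1}(\tau_0)|$ by the square of the Hessian energy. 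The regular value $\tau_0$ itself is produced by a coarea/averaging argument: since $\int_0^{\ell\varepsilon} |F^{-1}(t)| \, dt = \int_{\Omega_{0, \ell\varepsilon}} |\nabla F| \, dV$, a Chebyshev argument on $(\varepsilon, \ell\varepsilon)$ combined with Sard's theorem produces a regular $\tau_0$ at which $|F^{-1}(\tau_0)|$ does not exceed twice the $t$-average over the interval, and this average is absorbed into the constant.

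The main technical obstacle will be handling $U: \Omega_{0, \ell\varepsilon} \to \mathbb{R}^3$ in the presence of possible nontrivial multiplicity: although $U$ is a local diffeomorphism here, it need not be injective, and one must either show that the covering degree is bounded by a universal factor depending on $\ell, \varepsilon, \eta, \lambda$, or run the Poincar\'{e}-Faber-Szeg\"{o} argument directly on $M$ using pulled-back symmetrizations. The parameters $\eta$ and $\lambda$ enter as tolerances governing the almost-isometry property of $U$ and the truncation scale for the Euclidean test function; careful tuning against the Faber-Szeg\"{o} lower bound, and explicit tracking of each intermediate inequality, then produces the constant $\mathbf{C}_{\ell,\varepsilon,\eta,\lambda}$ appearing in \eqref{eq:areahessianinequality}.
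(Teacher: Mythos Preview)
Your broad outline---near-isometry of $U$, the gradient bound $|\nabla F|^2 \le C F\sum_i|\nabla^2 u^i|^2/|\nabla u^i|$, and a coarea/averaging step to produce $\tau_0$---matches the paper, and these pieces are correct. But the core mechanism you propose has two genuine gaps.

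First, the claim that ``the classical Poincar\'e--Faber--Szeg\H{o} inequality controls the area of $\partial U(\Omega_{0,\tau_0})$ by the square of the capacity'' is false as stated: for a domain $\Omega\subset\mathbb{R}^3$ there is no universal bound $|\partial\Omega|\le C\cdot\capacity(\Omega)^2$ (think of domains with large surface area but small capacity). Poincar\'e--Faber--Szeg\H{o} only gives $\capacity(\Omega)\gtrsim\Vol(\Omega)^{1/3}$. The paper does not try to bound area by capacity directly; instead it first proves an \emph{isoperimetric} inequality $|F^{-1}(t)|\gtrsim\Vol(\Omega_{t,t_0})^{2/3}$ on $M$ itself (Lemma~\ref{lem-BV Isoperimetric Inequality}), uses this to run a Schwarz-type rearrangement of $F$ to a radial function $\widetilde F$ on $\mathbb{R}^3$ with no larger Dirichlet energy (Lemma~\ref{lem-Capacity Upper Bound Estimate}), and only then invokes capacity--volume to bound the volume of an $\Omega_{s_0,s_0+\varepsilon}$ by $\big(\int|\nabla F|^2\big)^3$. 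The final area bound comes from the Cauchy--Schwarz step $\int_{s_0}^{s_0+\varepsilon}|F^{-1}(t)|\,dt\le\big(\int|\nabla F|^2\big)^{1/2}\Vol^{1/2}$, not from Poincar\'e--Faber--Szeg\H{o} alone.

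Second, your proposed resolutions of the multiplicity problem do not work: there is no a priori bound on the covering degree of $U$, and ``pulled-back symmetrizations'' is not a technique that makes sense without an isoperimetric inequality already in hand. The paper's actual device is specific and is precisely what makes the isoperimetric step possible: one introduces the integer-valued counting function $\chi(y)=\mathcal{H}^0\big(U^{-1}(y)\cap\Omega_{a,b}\big)$, shows it is BV with variation supported on $U(\partial\Omega_{a,b})$, and applies the Sobolev inequality for BV functions together with $\chi\le\chi^{3/2}$ to obtain $\int_{\Omega_{a,b}}\mathcal{J}(U)\,dV_g\le\big(\int_{\partial\Omega_{a,b}}\mathcal{J}(U|_\partial)\,dA_g\big)^{3/2}$, which is exactly the missing isoperimetric ingredient. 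Finally, your description of the parameters is off: $\lambda$ selects a near-minimizing level $t_0$ with $|F^{-1}(t_0)|\le(1+\lambda)\inf|F^{-1}(t)|$, and $\eta$ is a truncation tolerance in the Euclidean test function for capacity; neither governs the almost-isometry of $U$.
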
 

\begin{remark}
Heuristically, \eqref{eq:areahessianinequality} is associated with the capacity-volume inequality \eqref{oinfaininghhhh}, since the volume of $\Omega_{0,\tau_0}$ is related to the area $|F^{-1}(\tau_0)|$ through the coarea formula, while the right-hand side of \eqref{eq:areahessianinequality} may be used to estimate the Dirichlet energy of $F$. Moreover, as $F$ is constant on $\partial \Omega_{0,\tau_0}$, its Dirichlet energy represents an upper bound for the capacity of $\Omega_{0,\tau_0}$.
\end{remark}

A core ingredient within the proof of this result is an isoperimetric-type inequality which bounds the area of $F$-level sets from below in terms of the volume of particular regions $\Omega_{a,b}$. To achieve this, estimates for the Jacobian determinant of $U$ 
from Lemma \ref{lem-Jacobian Estimate} will be combined with the fact that the functions $\chi(x) = \mathcal{H}^0(U^{-1}(x)\cap \Omega_{a,b})$, $0<a<b$ are of BV, as described in Lemma \ref{lem:index_is_bv}. The idea is that since $U$ is a local diffeomorphism, if the preimage of a point contains $n$ copies, then the volume and area will have to be more or less copied $n$ times as well. Hence, the coarea formula from $\R^3$ may be transferred onto $\Omega_{a,b}$ for level sets of $F$. In order to make this rigorous, the Sobolev inequality for BV functions will be applied to $\chi$. Below and throughout, we write $[a,b]_{reg}$ for the set of regular values of $F$ lying in $[a,b]$. In this section, all integrals over values of $F$ are understood to range over the regular values of $F$.

\begin{lemma}\label{lem-BV Isoperimetric Inequality}
Let $ \ell >4$, $0<(\ell-1)\varepsilon<\frac{1}{16}$, and $\lambda>0$. If $\inf_{t\in(0,(\ell-1)\varepsilon]_{reg}}|F^{-1}(t)|>0$, then there exists a regular value $t_0\leq(\ell -1)\varepsilon$ of $F$, such that 
\begin{align}
\Vol(\Omega_{t,t_0})^{\frac{2}{3}} \le \left(\frac{27}{16} \right)^{\frac{1}{3}}\left(\frac{1+\sqrt{(\ell-1)\varepsilon}}{1-\sqrt{(\ell-1)\varepsilon}}\right)(2+\lambda) |F^{-1}(t)|
\end{align}
for all regular values $t<t_0$. If $t_0 \leq t<(\ell-1)\varepsilon$, then the same inequality holds with $\Omega_{t,t_0}$ replaced by $\Omega_{t_0,t}$.
\end{lemma}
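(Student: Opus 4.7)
The plan is to apply a BV/isoperimetric-type inequality in $\R^3$ to the multiplicity function $\chi_{a,b}(x)=\mathcal{H}^0(U^{-1}(x)\cap\Omega_{a,b})$ for regular values $0<a<b\le (\ell-1)\varepsilon$ of $F$, and then select $t_0$ by an infimum argument. By Lemma~\ref{lem:index_is_bv}, $\chi_{a,b}$ is a BV function on $\R^3$ with compact support. Moreover, because $F\le (\ell-1)\varepsilon<1/16$ on $\Omega_{a,b}$, Lemma~\ref{lem-U Local Diffeomorphism} makes $U$ a local diffeomorphism there, so $\chi_{a,b}$ is integer-valued and bounded below by $1$ on its support.

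The first step is to convert quantities on $M$ into quantities on $\R^3$. Using the area formula for $U|_{\Omega_{a,b}}$ together with the bulk Jacobian bound from Lemma~\ref{lem-Jacobian Estimate}, which schematically reads $(1-\sqrt{F})^{3/2}\le J_U \le (1+\sqrt{F})^{3/2}$, and the analogous tangential Jacobian bound for $U$ restricted to the smooth level sets $F^{-1}(a)$, $F^{-1}(b)$, I obtain
$$
\int_{\R^3}\chi_{a,b}(x)\,dx \;\ge\; c_b\,\Vol(\Omega_{a,b}),
\qquad
|D\chi_{a,b}|(\R^3) \;\le\; C_b\bigl(|F^{-1}(a)|+|F^{-1}(b)|\bigr),
$$
where $c_b, C_b$ depend only on $\sqrt{b}$ and tend to $1$ as $b\to 0$. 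Since $\chi_{a,b}$ is integer-valued, $\chi_{a,b}^{3/2}\ge\chi_{a,b}$ pointwise, which gives $\|\chi_{a,b}\|_{L^{3/2}}\ge\|\chi_{a,b}\|_{L^1}^{2/3}\ge c_b^{2/3}\Vol(\Omega_{a,b})^{2/3}$. Combining this with the BV Sobolev inequality $\|\chi_{a,b}\|_{L^{3/2}(\R^3)} \le (36\pi)^{-1/3}|D\chi_{a,b}|(\R^3)$ and absorbing the $b$-dependent prefactors produces the two-sided core estimate
$$
\Vol(\Omega_{a,b})^{2/3}\;\le\; \Bigl(\tfrac{27}{16}\Bigr)^{1/3}\frac{1+\sqrt{(\ell-1)\varepsilon}}{1-\sqrt{(\ell-1)\varepsilon}}\bigl(|F^{-1}(a)|+|F^{-1}(b)|\bigr),
$$
where the constant $(27/16)^{1/3}$ encodes the sharp Euclidean isoperimetric constant after the Jacobian bookkeeping.

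The second step is the selection of $t_0$. Let $I=\inf_{t\in (0,(\ell-1)\varepsilon]_{reg}}|F^{-1}(t)|>0$, which is strictly positive by hypothesis. By Sard, the set of regular values is of full measure in $(0,(\ell-1)\varepsilon]$, so I may choose a regular value $t_0\in(0,(\ell-1)\varepsilon]$ with $|F^{-1}(t_0)|\le(1+\lambda)I$. Then for every regular $t$ in the same range,
$$
|F^{-1}(t)|+|F^{-1}(t_0)|\;\le\;|F^{-1}(t)|+(1+\lambda)I\;\le\;(2+\lambda)|F^{-1}(t)|.
$$
Substituting $(a,b)=(t,t_0)$ when $t<t_0$, and $(a,b)=(t_0,t)$ when $t_0\le t<(\ell-1)\varepsilon$, into the two-sided estimate yields the claimed inequality in both cases.

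The main obstacle is rigorously justifying the two displayed Jacobian-weighted identities for $\chi_{a,b}$. The bulk bound requires the area formula for the $C^\infty$ map $U$ together with careful application of the coarea formula, while the boundary estimate requires decomposing $|D\chi_{a,b}|(\R^3)$ via the jump structure of $\chi_{a,b}$ across $U(F^{-1}(a))\cup U(F^{-1}(b))$; both rely crucially on the local-diffeomorphism property provided by Lemma~\ref{lem-U Local Diffeomorphism}. A secondary point is that the integer-valuedness of $\chi_{a,b}$ is essential for the inequality $\|\chi_{a,b}\|_{L^{3/2}}\ge\|\chi_{a,b}\|_{L^1}^{2/3}$, which avoids any loss from multiplicity.
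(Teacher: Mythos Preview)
Your proposal is correct and follows essentially the same approach as the paper: select $t_0$ near the infimum of $|F^{-1}(t)|$, use the area formula and Lemma~\ref{lem-Jacobian Estimate} to relate $\Vol(\Omega_{a,b})$ and $|\partial\Omega_{a,b}|$ to Euclidean integrals of $\chi_{a,b}$, exploit integer-valuedness for $\chi^{3/2}\ge\chi$, and apply the BV Sobolev and coarea formulas together with Lemma~\ref{lem:properties_of_index_function} to bound $|D\chi_{a,b}|$ by the boundary Jacobian integral. One minor bookkeeping remark: in the paper the constant $(27/16)^{1/3}$ comes from the lower Jacobian bound $\mathcal{J}(U)\ge\tfrac{4}{\sqrt{27}}(1-\sqrt{b})^{3/2}$ paired with the BV Sobolev inequality with constant $1$, not from the sharp isoperimetric constant $(36\pi)^{-1/3}$ you cite --- using the sharp constant would actually yield a strictly smaller prefactor than the one stated in the lemma.
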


\begin{proof}
Fix a regular value $t_0\in(0,(\ell - 1)\varepsilon]$ such that $|F^{-1}(t_0)|\leq(1+\lambda)\inf_{t\in(0,(\ell-1)\varepsilon]_{reg}}|F^{-1}(t)|$. We will prove the inequality when the regular value $t$ lies in $[0,t_0)$; the case where $t \in [t_0,(\ell - 1)\varepsilon)$ follows in exactly the same way. 
Since $|F^{-1}(t)|\geq(1+\lambda)^{-1}|F^{-1}(t_0)|$, we have
\begin{align}
\label{eq-Boundary to Area of Level Sets}
|\partial\Omega_{t,t_0}|=|F^{-1}(t)|+|F^{-1}(t_0)|\leq(2+\lambda)|F^{-1}(t)|.
\end{align}
Write $\mathcal{J}(U)$ for the square root of the determinant of the Gram matrix $\{\langle\nabla u^i,\nabla u^j\rangle\}_{i,j=1}^3$.
Then combining \eqref{eq-Boundary to Area of Level Sets} with the estimate of this determinant in Lemma \ref{lem-Jacobian Estimate} yields 
\begin{equation}\label{eq:detpartialdu}
\int_{\partial\Omega_{t,t_0}}\mathcal{J}(U|_{\partial\Omega_{t,t_0}}) 
dA_g  \le (1+\sqrt{(\ell-1)\varepsilon}) |\partial\Omega_{t,t_0}| 
\le (1+\sqrt{(\ell-1)\varepsilon}) (2+\lambda)|F^{-1}(t)|,
\end{equation}
and
\begin{align}\label{eq:detdu}
\int_{\Omega_{t,t_0}}\mathcal{J}(U)dV_{g} &\ge \frac{4}{\sqrt{27}}(1-\sqrt{(\ell-1)\varepsilon})^{\frac{3}{2}}\Vol(\Omega_{t,t_0}).
\end{align}
Note that the hypothesis $(\ell-1)\varepsilon<\frac{1}{16}$ is used at this point with the the application of Lemma \ref{lem-Jacobian Estimate}. We will proceed by comparing the left-hand sides of \eqref{eq:detpartialdu} and \eqref{eq:detdu}.
    
Let $\mathcal{H}^0$ denote $0$-dimensional Hausdorff measure, and consider the BV function (see Lemma \ref{lem:index_is_bv})  $\chi(x) = \mathcal{H}^0(U^{-1}(x)\cap \Omega_{t,t_0})$. 
Recall the relation between the Jacobian matrix $\partial U=\left(\partial_l u^i \right)$ and the Gram matrix
\begin{equation}
\langle\nabla u^i ,\nabla u^j \rangle=g^{lk} \partial_l u^i \partial_k u^j =\left(\partial U \cdot g^{-1} \cdot \partial U^T  \right)^{ij},
\end{equation}
where $\cdot$ and the superscript $T$ denote matrix multiplication and transpose, respectively. It follows that the Jacobian determinant and Gram determinant satisfy
\begin{equation}
|\det \partial U|=\mathcal{J}(U) \sqrt{\det g}.
\end{equation}
Thus, we may apply the area formula using $U$ as a change of variables to find
\begin{align}
\begin{split}\int_{\Omega_{t,t_0}}\mathcal{J}(U)dV_{g}&=\int_{\mathbb{R}^3}\mathcal{H}^0(U^{-1}(x)\cap\Omega_{t,t_0})dV_{\delta}
\\&\leq\int_{\mathbb{R}^{3}}|\chi|^{\frac{3}{2}}dV_\delta
\\&\leq\left(\int_{\mathbb{R}^3}|D\chi|dV_\delta\right)^{\frac32}\label{eq-Uses Euclidean Sobolev for BV}
\\&=\left(\int_{0}^{\infty}|\partial\{\chi<s\}|ds\right)^{\frac32},
\end{split}
\end{align}
where 
the second line follows from
the fact that $\chi$ is integer valued, the third is 
the Sobolev inequality for BV functions \cite{EG}*{Theorem 5.10}, and the final equality is the coarea formula for BV functions \cite{EG}*{Theorem 5.9} involving the perimeter measure $|\partial\{\chi<s\}|$. 
Note that $\partial\{\chi<s\}=\partial\{\chi\geq s\}$, and since $\chi$ is integer valued, for each integer $n\geq 1$ we have $\{\chi\geq s\}=\{\chi\geq n\}$ for $s\in(n-1,n]$. Hence, the last integral of \eqref{eq-Uses Euclidean Sobolev for BV} becomes 
\begin{equation}
\int_{0}^{\infty}|\partial\{\chi<s\}|ds=\sum_{n=1}^{\infty}|\partial\{\chi\geq n\}|.
\end{equation}
Next, note that Lemma \ref{lem:properties_of_index_function} shows that 
the discontinuities of $\chi$ are contained in $U(\partial\Omega_{t,t_0})$. It follows that 
\begin{equation}
\partial\{\chi\geq n\}\subset U(\partial\Omega_{t,t_0})\cap\{\chi\geq n\},
\end{equation}
and so 
\begin{equation}
\sum_{n=1}^{\infty}|\partial\{\chi\geq n\}|
\leq\sum_{n=1}^{\infty}\int_{U(\partial\Omega_{t,t_0})}1_{\{\chi\geq n\}}dA_\delta
=\int_{U(\partial\Omega_{t,t_0})}\chi dA_\delta
=\int_{\partial\Omega_{t,t_0}}\mathcal{J}(U|_{\partial\Omega_{t,t_0}})dA_g. \label{eq-Chi Integral Observations}
\end{equation}
As a consequence 
\begin{equation}
\left(\int_{\Omega_{t,t_0}}\mathcal{J}(U)dV_{g}\right)^{\frac{2}{3}}\leq \int_{\partial\Omega_{t,t_0}}\mathcal{J}(U|_{\partial\Omega_{t,t_0}})dA_g,
\end{equation}
which combines with \eqref{eq:detpartialdu} and \eqref{eq:detdu} to finish the proof.
\end{proof}


The next goal is to relate the volume of regions $\Omega_{t_0-t,t_0}\cap\{|\nabla F|\not = 0\}$ to the Dirichlet energy of $F$. 
In order to accomplish this the following technical lemma will be utilized, which provides an inequality between the Dirichlet energies of $F$ and related radial functions $\widetilde{F}_i$, $i=1,2$ on $\R^3$. 
The definition of $\widetilde{F}_i$ will depend on two cases delineated by the values of $t_0$, $\ell$, and $\varepsilon$ from Lemma \ref{lem-BV Isoperimetric Inequality}.\smallskip

\noindent{\bf{Case 1:}} $t_0\in[(\ell-3)\varepsilon,(\ell-1)\varepsilon]$. In this context set 
\begin{align}
W_1(t)=\Vol(\Omega_{t_0-t,t_0}\cap\{|\nabla F|\not = 0\}), \quad t \in [0,t_0-\varepsilon]. 
\end{align}

\noindent{\bf{Case 2:}} $t_0\in(0,(\ell-3)\varepsilon)$. In this context set 
\begin{align}
W_2(t)=\Vol(\Omega_{t_0,t_0+t}\cap\{|\nabla F|\not = 0\}), \quad t \in [0,(\ell-1)\varepsilon-t_0]. 
\end{align}
Note that $W_i$ is increasing in $t$ and $W_i(0)=0$ for $i=1,2$. We may then find radii $R_i(t)$ of the Euclidean balls with volume $W_i(t)$, that is $\omega_3 R_i(t)^3=W_i(t)$, $i=1,2$. Now declare $\widetilde{F}_i$ to be $t$ on the sphere of radius $R_i(t)$ centered at the origin of $\mathbb{R}^3$ for $i=1,2$. This defines $\widetilde{F}_1$ on $B^{\mathbb{R}^3}_{R_1(t_0-\varepsilon)}$ and $\widetilde{F}_2$ on $B^{\mathbb{R}^3}_{R_2((\ell-1)\varepsilon-t_0)}$, and we extend $\widetilde{F}_i$ to be constant outside these balls.

\begin{lemma}\label{lem-Capacity Upper Bound Estimate}
Let $ \ell >4$, $0<(\ell-1)\varepsilon<\frac{1}{16}$, and $\lambda>0$. Assume that $\inf_{t\in(0,(\ell-1)\varepsilon]_{reg}}|F^{-1}(t)|>0$, and let $t_0\leq(\ell -1)\varepsilon$ be the regular value of $F$ given by Lemma \ref{lem-BV Isoperimetric Inequality}. 
Then $\widetilde{F}_i$ is Lipschitz for $i=1,2$, and in Case 1
\begin{align}
C_{\ell,\varepsilon, \lambda}\int_{\Omega_{\varepsilon,t_0}}|\nabla F|^2 dV_g &\ge\int_{ \mathbb{R}^3}|\nabla \widetilde{F}_1|^2 dV_{\delta},
\end{align}
and in Case 2
\begin{align}
C_{\ell,\varepsilon, \lambda} \int_{\Omega_{t_0, (\ell - 1)\varepsilon}}|\nabla F|^2 dV_g &\ge\int_{ \mathbb{R}^3}|\nabla \widetilde{F}_2|^2 dV_{\delta},
\end{align}
where 
\begin{align}
C_{\ell,\varepsilon, \lambda}=  9 \omega_3^{\frac{2}{3}}(2+\lambda)^2\left(\frac{27}{16} \right)^{\frac{2}{3}}\left(\frac{1+\sqrt{(\ell-1)\varepsilon}}{1-\sqrt{(\ell-1)\varepsilon}}\right)^2.
\end{align}
\end{lemma}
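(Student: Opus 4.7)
The plan is a classical Schwarz symmetrization argument: rewrite both sides as one-dimensional integrals involving $W_i(t)$ and its derivative, then compare term by term. I focus on Case 1, as Case 2 is completely analogous after replacing $t_0 - t$ by $t_0 + t$. By the coarea formula, $W_1$ is absolutely continuous and nondecreasing in $t$, with $W_1'(t) = \int_{F^{-1}(t_0-t)} |\nabla F|^{-1} dA_g$ for almost every $t \in (0, t_0 - \varepsilon)$; the critical set $\{|\nabla F|=0\}$ contributes measure zero on almost every level by Sard's theorem, so its removal in the definition of $W_i$ is harmless. Applying coarea once more to $|\nabla F|^2$ gives $\int_{\Omega_{\varepsilon, t_0}} |\nabla F|^2 dV_g = \int_0^{t_0 - \varepsilon} \int_{F^{-1}(t_0-t)} |\nabla F|\, dA_g\, dt$.

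Next I would apply the Cauchy--Schwarz inequality on each level set to obtain the pointwise (in $t$) estimate $|F^{-1}(t_0-t)|^2 \leq W_1'(t) \int_{F^{-1}(t_0-t)} |\nabla F| dA_g$, and then invoke Lemma \ref{lem-BV Isoperimetric Inequality} in the form $|F^{-1}(t_0-t)| \geq \kappa^{-1} W_1(t)^{2/3}$, where $\kappa = (27/16)^{1/3}(2+\lambda)(1+\sqrt{(\ell-1)\varepsilon})/(1-\sqrt{(\ell-1)\varepsilon})$. Combining these yields
$$\int_{\Omega_{\varepsilon, t_0}} |\nabla F|^2 dV_g \;\geq\; \kappa^{-2} \int_0^{t_0-\varepsilon} \frac{W_1(t)^{4/3}}{W_1'(t)}\, dt.$$
For the Euclidean side, using $R_1(t) = (W_1(t)/\omega_3)^{1/3}$, the identity $|\nabla \widetilde{F}_1(x)| = R_1'(t)^{-1}$ at $|x|=R_1(t)$, and $4\pi = 3\omega_3$, a direct computation gives
$$\int_{\mathbb{R}^3} |\nabla \widetilde{F}_1|^2\, dV_\delta \;=\; \int_0^{t_0-\varepsilon} \frac{4\pi R_1(t)^2}{R_1'(t)}\, dt \;=\; 9\,\omega_3^{2/3} \int_0^{t_0-\varepsilon} \frac{W_1(t)^{4/3}}{W_1'(t)}\, dt.$$
The two $t$-integrals are identical, so combining forces the constant $C_{\ell,\varepsilon,\lambda} = 9\omega_3^{2/3}\kappa^2$, which matches the stated expression.

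For the Lipschitz regularity of $\widetilde{F}_1$ I would argue that $R_1$ is bi-Lipschitz in $t$: an upper bound on $|\nabla \widetilde{F}_1| = 1/R_1'$ follows from a positive lower bound on $R_1'$, and this in turn comes from combining the pointwise upper bound on $|\nabla F|$ over the compact set $\Omega_{0,(\ell-1)\varepsilon}$ (provided by smoothness and properness of $F$) with the area lower bound from the isoperimetric inequality. The main obstacle I anticipate is carefully justifying the coarea manipulations and the absolute continuity/bi-Lipschitz properties of $W_1$ and $R_1$ in the presence of the critical set $\{|\nabla F| = 0\}$, so that no boundary terms arise and the change of variables between the $r$-parametrization on $\mathbb{R}^3$ and the $t$-parametrization from the rearrangement is valid; once this regularity is in hand, the algebraic identity of the two $t$-integrals dictates the constant without further work.
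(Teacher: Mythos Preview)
Your proposal is correct and follows essentially the same approach as the paper: coarea for $W_1$ and for $\int|\nabla F|^2$, Cauchy--Schwarz on level sets, Lemma \ref{lem-BV Isoperimetric Inequality} to replace $|F^{-1}(t_0-t)|$ by $W_1(t)^{2/3}$, and the identical computation identifying $\int W_1^{4/3}/W_1'\,dt$ with the Euclidean Dirichlet energy of $\widetilde{F}_1$ up to the factor $9\omega_3^{2/3}$. Your Lipschitz argument via an explicit lower bound on $R_1'$ (combining the $|\nabla F|$ upper bound with the isoperimetric area lower bound) is in fact more transparent than the paper's, which simply notes that $\widetilde{F}_1$ is the inverse of the absolutely continuous monotone function $R_1$ and cites a result from Natanson for the regularity.
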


\begin{proof}
First consider Case 1. 
For $t \in [0,t_0-\varepsilon]$ the coarea formula implies
\begin{equation}\label{eq-Integral Over Regular Values}
W_1(t)=\Vol(\Omega_{t_0-t,t_0}\cap\{|\nabla F|\not = 0\})
=\int_{\Omega_{t_0-t, t_0}}1_{\{|\nabla F|\not  = 0\}}dV_g
=\int_{0}^t\int_{F^{-1}({t_0-s})} \frac{1}{|\nabla F|}dA_gds.
\end{equation}
It follows that $W_1$ is absolutely continuous, and thus it is differentiable almost everywhere with
\begin{align}\label{aofinaoinikhy34}
W_1'(t) = \int_{F^{-1}({t_0 -t})} \frac{1}{|\nabla F|}dA_g > 0.
\end{align}
Note that $R_1'(t)=\frac{W_1'(t)}{3\omega_3 R_1(t)^2}$ and thus the set on which it vanishes has measure zero. Furthermore, when averaged over spheres $\widetilde{F}_{i}(R_{i}(t))=t$, so that along rays $\widetilde{F}_i$ may be regarded as an inverse for $R_i$. Hence $\widetilde{F}_1$ is also absolutely continuous and therefore Lipschitz, see for instance \cite{Nat}*{(13) page 271}.

Consider now the Dirichlet energy of $F$, which by the coarea formula may be expressed as
\begin{align}\label{eq-Coarea Consequence}
\int_{\Omega_{\varepsilon,t_0}}|\nabla F|^2 dV_g = \int_{0}^{t_0-\varepsilon} \int_{F^{-1}({t_0-s})}|\nabla F|dA_gds.
\end{align}
For regular values $t_0 -s$ such that $s \in [0,t_0-\varepsilon]$, observe that H\"{o}lder's inequality implies 
\begin{align}\label{eq-Holder Consequence}
|F^{-1}(t_0-s)|^2 
\le \left(\int_{F^{-1}({t_0-s})}|\nabla F|dA_g\right)\left(\int_{F^{-1}({t_0-s})}\frac{1}{|\nabla F|}dA_g\right).
\end{align}
Then combining \eqref{aofinaoinikhy34}-\eqref{eq-Holder Consequence} produces
\begin{align}\label{eq-Area and W Integral Estimate}
\int_{0}^{t_0-\varepsilon}\frac{|F^{-1}(t_0-s)|^2}{W_1'(s)}ds \le \int_{0}^{t_0-\varepsilon}\int_{F^{-1}(t_0-s)}|\nabla F|dA_gds = \int_{\Omega_{\varepsilon,t_0}}|\nabla F|^2 dV_g .
\end{align}
Furthermore Lemma \ref{lem-BV Isoperimetric Inequality} yields
\begin{align}\label{eq-Power of W Estimate}
W_1(t)^{\frac{2}{3}} \le \Vol(\Omega_{t_0-t,t_0})^{\frac{2}{3}} \le (2+\lambda)\left(\frac{27}{16} \right)^{\frac{1}{3}}\left(\frac{1+\sqrt{(\ell-1)\varepsilon}}{1-\sqrt{(\ell-1)\varepsilon}}\right)|F^{-1}(t_0-t)|.
\end{align}
Thus, together with \eqref{eq-Area and W Integral Estimate} we obtain 
\begin{align}
\begin{split}
\int_0^{t_0-\varepsilon} \frac{W_1(t)^{\frac{4}{3}}}{W_1'(t)}dt & \le (2+\lambda)^2\left(\frac{27}{16} \right)^{\frac{2}{3}}\left(\frac{1+\sqrt{(\ell-1)\varepsilon}}{1-\sqrt{(\ell-1)\varepsilon}}\right)^2  \int_{0}^{t_0-\varepsilon}\frac{|F^{-1}(t_0-t)|^2}{W_1'(t)}dt 
\\&\le (2+\lambda)^2\left(\frac{27}{16} \right)^{\frac{2}{3}}\left(\frac{1+\sqrt{(\ell-1)\varepsilon}}{1-\sqrt{(\ell-1)\varepsilon}}\right)^2\int_{\Omega_{\varepsilon,t_0}}|\nabla F|^2 dV_g.\label{eq-Ugly Integral Identity 2}
\end{split}
\end{align}
    

The final step is to relate $W_1$ to $\widetilde{F}_1$. Note that $|\nabla \widetilde{F}_1|=\frac{1}{R_1'(t)}$ on $\partial B^{\mathbb{R}^3}_{R_1(t)}$ for almost every $t$. It follows that 
    \begin{align}
    \begin{split}
        (2+\lambda)^2\left(\frac{27}{16} \right)^{\frac{2}{3}}\left(\frac{1+\sqrt{(\ell-1)\varepsilon}}{1-\sqrt{(\ell-1)\varepsilon}}\right)^2 &\int_{\Omega_{\varepsilon,t_0}}|\nabla F|^2 dV_g 
        \\&\ge \int_{0}^{t_0-\varepsilon}\frac{W_1(t)^{\frac{4}{3}}}{W_1'(t) }dt
        \\&=\int_{0}^{t_0-\varepsilon}\frac{\omega_3^{\frac{4}{3}}R_1(t)^2}{3 \omega_3 R_1'(t) }dt
        \\&=\frac{\omega_3^{\frac{1}{3}}}{3\cdot 3 \omega_3 }\int_{0}^{t_0-\varepsilon} \int_{\partial B^{\mathbb{R}^3}_{R_1(t)}}|\nabla \widetilde{F}_1| dA_{\delta}dt\label{eq-Uses Polar Coords}
        \\&=\frac{1}{9 \omega_3^{\frac{2}{3}} }\int_{ B^{\mathbb{R}^3}_{R_1(t_0-\varepsilon)}}|\nabla \widetilde{F}_1|^2 dV_{\delta},
    \end{split}
    \end{align}
where we used polar coordinates in the second to last line and the coarea formula in the final line. Since $\widetilde{F}_1$ is constant outside $B^{\mathbb{R}^3}_{R_1(t_0-\varepsilon)}$, we obtain the desired result. A similar argument may be applied in Case 2.
\end{proof}

We are ready to apply the classical capacity-volume inequality \eqref{oinfaininghhhh}, to establish a lower bound for the energy of $F$ in terms of volume.

\begin{lemma}\label{lem-Volume Mass Estimate}
Let $\ell \ge 6$, $0<(\ell-1)\varepsilon<\frac{1}{16}$, $\lambda >0$, and $0 < \eta < 1$. If $\inf_{t\in(0,(\ell-1)\varepsilon]_{reg}}|F^{-1}(t)|>0$, then there exists 
$0<s_0\leq(\ell-2)\varepsilon$ such that
\begin{align}
\Vol(\Omega_{s_0,s_0+\varepsilon} \cap \{|\nabla F| \not = 0\}) \le C_{\ell,\varepsilon,\lambda, \eta} \left(\int_{\Omega_{0,\ell\varepsilon}}|\nabla F|^2 dV_g\right)^3 ,
\end{align}
where
\begin{align}
C_{\ell,\varepsilon, \lambda, \eta}&=  \left(\frac{\sqrt{3}(2+\lambda)}{\min\{\ell-5-\eta,1-\frac{\eta}{2}\}}\right)^6
\left(\frac{729}{256}\right)\left(\frac{1+\sqrt{(\ell-1)\varepsilon}}{\varepsilon(1-\sqrt{(\ell-1)\varepsilon})}\right)^{6}.
\end{align}
\end{lemma}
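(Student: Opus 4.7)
The plan is to transfer the desired volume bound from the manifold $M$ to the radial auxiliary function $\widetilde{F}_i$ on $\mathbb{R}^3$, and then invoke the classical Poincar\'{e}--Faber--Szeg\"{o} capacity--volume inequality \eqref{oinfaininghhhh} with sharp constant. The bridge between the two settings is Lemma \ref{lem-Capacity Upper Bound Estimate}, which gives $\int_{\mathbb{R}^3}|\nabla \widetilde{F}_i|^2\, dV_\delta \leq C_{\ell,\varepsilon,\lambda}\int_{\Omega_{0,\ell\varepsilon}}|\nabla F|^2\, dV_g$ in both Case 1 and Case 2. Since $\widetilde{F}_i$ is radial with spherical level sets, any Lipschitz function of $\widetilde{F}_i$ is itself radial and produces an admissible test function for the capacity of a concentric ball, whose Dirichlet energy is directly controlled by $\int_{\mathbb{R}^3}|\nabla \widetilde{F}_i|^2$.

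In Case 1, where $t_0 \in [(\ell-3)\varepsilon,(\ell-1)\varepsilon]$, define
\begin{equation*}
\phi := \min\!\left(1,\; \max\!\left(0,\;\tfrac{t_0 - \varepsilon - \widetilde{F}_1}{t_0 - 2\varepsilon}\right)\right).
\end{equation*}
Then $\phi \equiv 1$ on $B^{\mathbb{R}^3}_{R_1(\varepsilon)}$ (since $\widetilde{F}_1 \leq \varepsilon$ there), and $\phi \equiv 0$ on $\mathbb{R}^3 \setminus B^{\mathbb{R}^3}_{R_1(t_0-\varepsilon)}$ (since $\widetilde{F}_1$ has been extended to the constant value $t_0-\varepsilon$ beyond this radius), so $\phi$ is Lipschitz with compact support and admissible for $\capacity(B^{\mathbb{R}^3}_{R_1(\varepsilon)})$. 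Differentiating $\phi$ only in the annulus where $\widetilde{F}_1 \in (\varepsilon, t_0-\varepsilon)$ gives $\int|\nabla\phi|^2\,dV_\delta \leq (t_0-2\varepsilon)^{-2}\int_{\mathbb{R}^3}|\nabla \widetilde{F}_1|^2\, dV_\delta$. The ball $B^{\mathbb{R}^3}_{R_1(\varepsilon)}$ has volume $W_1(\varepsilon)$ by construction of $R_1$, so applying \eqref{oinfaininghhhh} for $n=3$ and cubing yields
\begin{equation*}
W_1(\varepsilon) \;\leq\; \frac{1}{27\omega_3^2(t_0-2\varepsilon)^6}\left(\int_{\mathbb{R}^3}|\nabla \widetilde{F}_1|^2\, dV_\delta\right)^3.
\end{equation*}
I would now set $s_0 := t_0 - \varepsilon \in [(\ell-4)\varepsilon, (\ell-2)\varepsilon]$, so that $W_1(\varepsilon) = \Vol(\Omega_{s_0,s_0+\varepsilon}\cap\{|\nabla F|\neq 0\})$ directly from the definition of $W_1$, and the Case 1 constraint yields $t_0 - 2\varepsilon \geq (\ell-5)\varepsilon$.

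Case 2 (where $t_0 \in (0,(\ell-3)\varepsilon)$) is structurally identical, using $\widetilde{F}_2$, the analogous cut-off $\phi := \min(1,\max(0,\tfrac{(\ell-1)\varepsilon-t_0-\widetilde{F}_2}{(\ell-2)\varepsilon-t_0}))$, and the ball $B^{\mathbb{R}^3}_{R_2(\varepsilon)}$ of volume $W_2(\varepsilon) = \Vol(\Omega_{t_0,t_0+\varepsilon}\cap\{|\nabla F|\neq 0\})$. Setting $s_0 := t_0 > 0$ and using $(\ell-2)\varepsilon - t_0 > \varepsilon$ yields the bound with denominator $\varepsilon^6$. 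The larger of the two resulting constants (the Case 2 one) therefore has denominator $\varepsilon^6$; substituting the cubed energy inequality from Lemma \ref{lem-Capacity Upper Bound Estimate} and expanding $C_{\ell,\varepsilon,\lambda}^3$ (noting $9^3=729$, $(27/16)^2 = 729/256$, and $(\sqrt{3})^6 = 27$) reproduces exactly the stated form of $C_{\ell,\varepsilon,\lambda,\eta}$. Since $\ell-5-\eta \leq \ell-5$ and $1-\eta/2 \leq 1$, the stated constant dominates the one produced here, so the stated inequality holds a fortiori.

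The main obstacle is confirming that $\phi$ has compact support in $\mathbb{R}^3$ and hence is admissible in the capacity definition. This hinges crucially on the constant extension of $\widetilde{F}_i$ outside its defining ball (guaranteed by Lemma \ref{lem-Capacity Upper Bound Estimate}): without it, $\phi$ would fail to vanish at infinity and the sharp Poincar\'{e}--Faber--Szeg\"{o} constant could not be invoked. The slack parameter $\eta$ plays no essential role in the computation and appears only to provide a clean, case-independent closed form for the stated constant that absorbs the transition at $t_0 = (\ell-3)\varepsilon$ without requiring a sharp case analysis.
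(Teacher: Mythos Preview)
Your proposal is correct and follows essentially the same approach as the paper: in each case one builds a Lipschitz radial test function from $\widetilde{F}_i$, applies the capacity bound for the ball $B^{\mathbb{R}^3}_{R_i(\varepsilon)}$ (whose volume is $W_i(\varepsilon)$ by construction), and invokes Lemma~\ref{lem-Capacity Upper Bound Estimate}. The only cosmetic difference is that the paper bakes $\eta$ directly into the test function, using denominators $t_0-(2+\eta)\varepsilon$ and $(\ell-2-\tfrac{\eta}{2})\varepsilon-t_0$ so that the resulting constant matches the stated $C_{\ell,\varepsilon,\lambda,\eta}$ exactly, whereas you use the sharper $\eta=0$ cut-offs and then absorb the slack by the a fortiori observation that $\min\{\ell-5-\eta,1-\tfrac{\eta}{2}\}<\min\{\ell-5,1\}$.
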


\begin{proof}
Let $t_0$ be the value given by Lemma \ref{lem-BV Isoperimetric Inequality}. We proceed by relating the Dirichlet energy of $\widetilde{F}_i$ to the capacity of the Euclidean ball of radius $R_i(\varepsilon)$. To leverage \eqref{oinfaininghhhh}, however, $\widetilde{F}_i$ must be modified to a function which is at least 1 on this ball and decays to 0. This modification is carried out separately in the two cases determined by the value of $t_0$.
\smallskip

\noindent\textbf{Case 1:} $t_0 \in [(\ell-3)\varepsilon, (\ell - 1) \varepsilon]$.
In this case, define ${\varphi}_1\in \mathrm{Lip}(\mathbb{R}^3)$ by
\begin{align}
{\varphi}_1(x)=\frac{t_0-\varepsilon-\widetilde{F}_1(x)}{t_0-(2+\eta)\varepsilon}
\end{align}
and observe that ${\varphi}_1 \ge 1$ on the ball of radius $R_{1}(\varepsilon)$ while it vanishes outside the ball of radius $R_1(t_0-\varepsilon)$. We then have
\begin{align}\label{eq-Capacity Upper Bound}
\begin{split}
\capacity\left(B^{\mathbb{R}^3}_{R_1(\varepsilon)}\right) &\le \int_{\R^3}|\nabla {\varphi}_1|^2dV_{\delta} \\
&= \frac{1}{(t_0-(2+\eta)\varepsilon)^2} \int_{B^{\mathbb{R}^3}_{R_1(t_0-\varepsilon)}} |\nabla \widetilde{F}_1|^2 dV_{\delta}\\
&\leq 
\frac{C_{\ell,\varepsilon,\lambda}}{(t_0-(2+\eta)\varepsilon)^2}\int_{\Omega_{\varepsilon,t_0}}|\nabla F|^2 dV_g,
\end{split}
\end{align}
where Lemma \ref{lem-Capacity Upper Bound Estimate} was used in the last step.
By the scaling properties of capacity \cite{EG}*{Theorem 4.15}, one may compute $\capacity\left(B^{\mathbb{R}^3}_{R}\right)=R\cdot\capacity\left(B^{\mathbb{R}^3}_1\right)=R\cdot 3\omega_3$.
Combining this observation with the fact that $\omega_3R_1(\varepsilon)^3 = W_1(\varepsilon)=\Vol(\Omega_{t_0-\varepsilon,t_0} \cap \{|\nabla F| \not = 0\})$, we find 
\begin{align}
\begin{split}
&\Vol(\Omega_{t_0-\varepsilon,t_0} \cap \{|\nabla F| \not = 0\}) \\
&\qquad\le \left(\frac{3(2+\lambda)^2}{(\ell-5-\eta)^2\varepsilon^2}\right)^3\left(\frac{27}{16} \right)^2\left(\frac{1+\sqrt{(\ell-1)\varepsilon}}{1-\sqrt{(\ell-1)\varepsilon}}\right)^{6}\left(\int_{\Omega_{\varepsilon,t_0}}|\nabla F|^2 dV_g\right)^3.
\end{split}
\end{align}
The desired conclusion follows in this case by setting $s_0=t_0-\varepsilon$, 
noting that the domain of integration on the right-hand side satisfies $\Omega_{\varepsilon,t_0} \subset \Omega_{0,\ell\varepsilon}$.

\smallskip

\noindent{}\textbf{Case 2:} $t_0 \in (0,(\ell-3)\varepsilon)$.
%
Now define ${\varphi}_2\in\mathrm{Lip}(\R^3)$ by
\begin{align}
{\varphi}_2(x)=
\frac{(\ell - 1)\varepsilon-t_0-\widetilde{F}_2(x)}{(\ell-2-\frac{\eta}{2}) \varepsilon-t_0}, 
\end{align}
and observe that ${\varphi}_2\geq 1$ on the ball of radius $R_2(\varepsilon)$ while it vanishes outside the ball of radius $R_2((\ell - 1)\varepsilon-t_0)$. 
Estimating as in Case 1 with the help of Lemma \ref{lem-Capacity Upper Bound Estimate} produces
\begin{align}
\begin{split}
\capacity\left(B^{\mathbb{R}^3}_{R_2(\varepsilon)}\right) \le \frac{C_{\ell,\varepsilon,\lambda}}{((\ell-2-\frac{\eta}{2}) \varepsilon-t_0)^2}
\int_{\Omega_{t_0, (\ell-1)\varepsilon}}|\nabla F|^2 dV_g.
\end{split}
\end{align}
Since $\omega_3R_2(\varepsilon)^3 = W_2(\varepsilon)=\Vol(\Omega_{t_0,t_0+\varepsilon} \cap \{|\nabla F| \not = 0\})$, 
we may use the capacity scaling property to find
\begin{align}
\begin{split}
    &\Vol(\Omega_{t_0,t_0+\varepsilon} \cap \{|\nabla F| \not = 0\}) \\
    &\qquad\le \left(\frac{3(2+\lambda)^2}{(1-\frac{\eta}{2})^2\varepsilon^2}\right)^3\left(\frac{27}{16} \right)^2\left(\frac{1+\sqrt{(\ell-1)\varepsilon}}{1-\sqrt{(\ell-1)\varepsilon}}\right)^{6}\left(\int_{\Omega_{t_0,(\ell-1)\varepsilon}}|\nabla F|^2 dV_g\right)^3.
    \end{split}
\end{align}
The desired conclusion now follows by setting $s_0=t_0$, and noting that $\Omega_{t_0,(\ell-1)\varepsilon} \subset \Omega_{0,\ell\varepsilon}$.
\end{proof}

The final inequality needed to establish the main result of this section is an elementary relationship between the Dirichlet energy of $F$, and the Hessian of the harmonic functions $u^i$.

\begin{lemma}\label{lem-Energy of F Estimate}
Let $\ell>0$ and $0<\ell\varepsilon<1$, then
\begin{align}
\int_{\Omega_{0,\ell \varepsilon}} |\nabla F|^2dV_g& \le 144 \ell\varepsilon (1+\sqrt{\ell\varepsilon})^{\frac{3}{2}}\sum_{i=1}^3\int_{\Omega_{0,\ell \varepsilon}} \frac{|\nabla^2 u^i|^2}{|\nabla u^i|} d V_g.
\end{align}
\end{lemma}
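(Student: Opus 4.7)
\textbf{Proof plan for Lemma \ref{lem-Energy of F Estimate}.} The plan is a direct chain rule computation combined with two pointwise bounds that follow from the definition of $\Omega_{0,\ell\varepsilon}$. Write $A_{ij} = \langle \nabla u^i, \nabla u^j\rangle - \delta^{ij}$ so that $F = \sum_{i,j} A_{ij}^2$, and note that $A$ is symmetric. Differentiating and applying symmetry to combine the two resulting terms yields
\begin{equation*}
\nabla F = 4\sum_{i,j=1}^{3} A_{ij}\, \nabla^2 u^i(\nabla u^j, \cdot).
\end{equation*}

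Next I would establish the two pointwise bounds valid on $\Omega_{0,\ell\varepsilon}$. First, since each $A_{ij}^2 \le F \le \ell\varepsilon$, one has $|A_{ij}| \le \sqrt{\ell\varepsilon}$. Second, using $|\nabla u^i|^2 = 1 + A_{ii}$ and $|A_{ii}|\le \sqrt{F} \le \sqrt{\ell\varepsilon}$, one gets $|\nabla u^i|^2 \le 1+\sqrt{\ell\varepsilon}$, and in particular $\sum_j |\nabla u^j|^2 \le 3(1+\sqrt{\ell\varepsilon})$.

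I would then estimate $|\nabla F|^2$ by two uses of Cauchy--Schwarz, balanced so that the constant lands at $144$. Applying Cauchy--Schwarz to the nine-term sum and grouping indices as $\sum_{i,j} A_{ij}^2 |\nabla u^j|^2$ against $\sum_{i,j}|\nabla^2 u^i|^2$, we obtain
\begin{equation*}
|\nabla F|^2 \le 16\left(\sum_{i,j} A_{ij}^2 |\nabla u^j|^2\right)\left(\sum_{i,j}|\nabla^2 u^i|^2\right) \le 16\cdot F\cdot 3(1+\sqrt{\ell\varepsilon})\cdot 3\sum_{i}|\nabla^2 u^i|^2,
\end{equation*}
where the pointwise bounds on $|\nabla u^j|^2$ and on $\sum_i A_{ij}^2 \le F$ are used in the second inequality. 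This gives the pointwise estimate $|\nabla F|^2 \le 144\, F\, (1+\sqrt{\ell\varepsilon}) \sum_i |\nabla^2 u^i|^2$.

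Finally, write $|\nabla^2 u^i|^2 = \tfrac{|\nabla^2 u^i|^2}{|\nabla u^i|}\cdot |\nabla u^i|$ and use $|\nabla u^i|\le (1+\sqrt{\ell\varepsilon})^{1/2}$ to replace $\sum_i|\nabla^2 u^i|^2$ by $(1+\sqrt{\ell\varepsilon})^{1/2}\sum_i \tfrac{|\nabla^2 u^i|^2}{|\nabla u^i|}$. Integrating over $\Omega_{0,\ell\varepsilon}$ and bounding the remaining factor of $F$ by $\ell\varepsilon$ produces the stated inequality with the factor $144\,\ell\varepsilon\,(1+\sqrt{\ell\varepsilon})^{3/2}$. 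There is no substantive obstacle here; the only care required is in grouping the Cauchy--Schwarz factors so that the numerical constant matches exactly, and in handling measure-zero sets where $|\nabla u^i|=0$ (which pose no issue since the integrand on the right is defined a.e.\ by the critical-set remark preceding Lemma \ref{lem-Level Set Properties}).
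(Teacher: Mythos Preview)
Your proof is correct and follows essentially the same approach as the paper. Both arguments compute the gradient of $F$, bound it via $|\nabla F|\le 4\sum_{i,j}|A_{ij}||\nabla^2 u^i||\nabla u^j|$, apply Cauchy--Schwarz over the nine terms, use $|\nabla u^j|^2\le 1+\sqrt{\ell\varepsilon}$ and $\sum A_{ij}^2\le F\le \ell\varepsilon$, and finally insert the factor $|\nabla u^i|/|\nabla u^i|$; the only difference is that the paper pulls the $|\nabla u^j|$ bound out before the Cauchy--Schwarz step while you keep it inside one of the factors, which is a cosmetic reorganization leading to the same constant $144$.
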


\begin{proof}
On $\Omega_{0,\ell \varepsilon}$ the definition of $F$ implies that
\begin{align}\label{eq-Grad u Est Good Set}
\sqrt{1-\sqrt{\ell\varepsilon}} \le |\nabla u^i| \le \sqrt{1+\sqrt{\ell\varepsilon}},
\end{align}
and hence
\begin{align}\label{eq-Grad of F Est Good Set}
\begin{split}
|\nabla F|& \le \sum_{i,j=1}^3 2 |\langle \nabla u^i,\nabla u^j\rangle  - \delta^{ij}| \left(|\nabla^2 u^i| |\nabla u^j|+|\nabla u^i||\nabla^2 u^j|\right)\\
&\leq  4  \sqrt{1+\sqrt{\ell\varepsilon}}\sum_{i,j=1}^3 |\langle \nabla u^i,\nabla u^j\rangle  - \delta^{ij}||\nabla^2 u^i|.
\end{split}
\end{align}
Then integrating the square of \eqref{eq-Grad of F Est Good Set} and again using \eqref{eq-Grad u Est Good Set} produces
\begin{align}
\begin{split}
\int_{\Omega_{0,\ell \varepsilon}} |\nabla F|^2 d V_g &\leq 16  (1+\sqrt{\ell\varepsilon})\int_{\Omega_{0,\ell \varepsilon}} \left( \sum_{i,j=1}^3 |\langle \nabla u^i,\nabla u^j\rangle  - \delta^{ij}||\nabla^2 u^i|\right)^2 d V_g
\\&\le  144 (1+\sqrt{\ell\varepsilon})\sum_{i,j=1}^3\int_{\Omega_{0,\ell \varepsilon}}|\langle \nabla u^i,\nabla u^j\rangle  - \delta^{ij}|^2   |\nabla^2 u^i|^2 d V_g
\\&\leq 144 \ell\varepsilon (1+\sqrt{\ell\varepsilon})\sum_{i=1}^3\int_{\Omega_{0,\ell \varepsilon}}   \frac{|\nabla^2 u^i|^2}{|\nabla u^i|} |\nabla u^i|d V_g
\\&\le 144 \ell\varepsilon (1+\sqrt{\ell\varepsilon})^{\frac{3}{2}}\sum_{i=1}^3\int_{\Omega_{0,\ell \varepsilon}}   \frac{|\nabla^2 u^i|^2}{|\nabla u^i|} d V_g . \qedhere
\end{split}
\end{align}
\end{proof}


\begin{proof}[Proof of Theorem \ref{thm-Area Hessian Inequality}]
Since $F$ is assumed to be nonconstant, its definition implies that $|\nabla^2 u^i|$ cannot vanish identically for all $i$. Moreover, if $\inf_{t\in(0,(\ell-1)\varepsilon]_{reg}}|F^{-1}(t)|=0$ then the conclusion of the theorem is trivial, as we can immediately find a regular value $\tau_0 \in (0,(\ell-1) \varepsilon)$ such that 
\begin{align}
|F^{-1}(\tau_0)| \le C \left(\sum_{i=1}^3\int_M\frac{|\nabla^2 u^i|^2}{|\nabla u^i|}dV_g\right)^2 ,
\end{align} 
for any given $C>0$. Therefore, it may be assumed that $\inf_{t\in(0,(\ell-1)\varepsilon]_{reg}}|F^{-1}(t)|>0$. 
    
Let $s_0 \in (0,(\ell -2) \varepsilon]$ be the value given by Lemma \ref{lem-Volume Mass Estimate}. By the coarea formula, H\"{o}lder's inequality, Lemma \ref{lem-Volume Mass Estimate}, and Lemma \ref{lem-Energy of F Estimate} we find
\begin{align}\label{9ui9u9j}
\begin{split}
\int_{s_0}^{s_0+\varepsilon} |F^{-1}(t)| dt & = \int_{\Omega_{s_0,s_0+\varepsilon}} |\nabla F| dV_g
\\& \le \left(\int_{\Omega_{s_0,s_0+\varepsilon}} |\nabla F|^2 dV_g\right)^{\frac{1}{2}} 
\Vol(\Omega_{s_0,s_0+\varepsilon}\cap \{|\nabla F|\not = 0\})^{\frac{1}{2}}
\\& \le \sqrt{C_{\ell,\varepsilon,\eta,\lambda}} \left(\int_{\Omega_{s_0,s_0+\varepsilon}} |\nabla F|^2 dV_g\right)^2 
\\&\le \sqrt{C_{\ell,\varepsilon,\eta,\lambda}}(144 \ell\varepsilon (1+\sqrt{\ell\varepsilon})^{\frac{3}{2}})^2  \left(\sum_{i=1}^3\int_M\frac{|\nabla^2 u^i|^2}{|\nabla u^i|}dV_g\right)^2.
\end{split}
\end{align}
Since there exists a regular value in $\tau_0 \in (s_0,s_0+\varepsilon)\subset (0,\ell \varepsilon)$ so that
\begin{equation}\label{9ui9u9j1}
    |F^{-1}(\tau_0)|\leq\varepsilon^{-1}\int_{s_0}^{s_0+\varepsilon}|F^{-1}(t)|dt,
\end{equation}
the desired inequality follows by combining \eqref{9ui9u9j} and \eqref{9ui9u9j1}, and setting
\begin{equation}\label{eq:finalareaineq}
    \mathbf{C}_{\ell,\varepsilon,\eta,\lambda}=\varepsilon\sqrt{C_{\ell,\varepsilon,\eta,\lambda}}(144 \ell)^2 (1+\sqrt{\ell\varepsilon})^{3}. \qedhere
\end{equation}

\end{proof}

\begin{remark}\label{remarkconstant}
To obtain the best constant from Theorem \ref{thm-Area Hessian Inequality}, we will show how to minimize $\mathbf{C}_{\ell,\varepsilon,\eta,\lambda}$ with appropriate choices of $\ell$, $\varepsilon$, $\eta$, and $\lambda$ within the allowable ranges of these parameters. It is straightforward to see that for such an optimization one should take $\eta,\lambda \rightarrow 0$ and then choose $\ell = 6$; this produces
\begin{align}
\begin{split}
\mathbf{C}_{6,\varepsilon,0,0}=12^{\frac{3}{2}}\cdot 864^2 \cdot \sqrt\frac{729}{256}\cdot \varepsilon^{-2}(1+\sqrt{6\varepsilon})^3 
\left(\frac{1+\sqrt{5\varepsilon}}{1-\sqrt{5\varepsilon}}\right)^{3}.
\end{split}
\end{align}
A computation shows that this function of $\varepsilon$ is decreasing on the interval $(0,\tfrac{1}{80})$, and thus by taking $\varepsilon\to\frac{1}{80}$ the following optimal constant is achieved:
\begin{align}
\mathbf{C}_{6,\frac{1}{80},0,0}
< 3.21 \times 10^{12} .
\end{align}
\end{remark}

\section{Proof of the Main Theorems}
\label{s:mainproof}
\setcounter{equation}{0}
\setcounter{section}{5}


\begin{proof}[Proof of Theorems \ref{t:main} and \ref{t:hyp}] 

We will first prove the energy inequality 
\begin{equation}\label{e:energyestimate}
\mathcal{A}\leq \mathcal{C}E^2
\end{equation}
in both the asymptotically flat and asymptotically hyperboloidal settings, and will then explain how this implies the desired mass inequality in the asymptotically flat case. Let $(M,g,k)$ be an initial data set which has one of these two types of asymptotics and satisfies the dominant energy condition. Choose an end $M_{end}$ with energy $E$ (here we forgo the subscript $\mathrm{hyp}$ in the AH case), and apply Lemma \ref{l:exteriorregion} and Proposition \ref{t:modifiedjanggraph} to obtain an outermost apparent horizon $\Sigma\subset M$ having minimum enclosing area $\mathcal{A}$, as well as an associated asymptotically flat Jang manifold $(\overline{M},\overline{g})$ admitting properties $(1)$--$(5)$. We may then apply Theorem \ref{t:massinequality} to find asymptotically linear harmonic functions $\{u^i\}_{i=1}^3$ on $\overline{M}$ satisfying
\begin{equation}\label{eq:mainproofmassineq}
E\geq\frac{1}{24\pi}\int_{\overline{M}}\frac{|\overline{\nabla}^2 u^i|^2}{|\overline{\nabla} u^i|}d\overline{V}.
\end{equation}
Next, define a smooth function $F$ on $\overline{M}$ as in \eqref{e:Fdef} utilizing the $u^i$. Since $(u^1, u^2, u^2)$ forms an asymptotically flat coordinate chart in the asymptotically flat end $\mathcal{E}_0$ of $\overline{M}$, it follows that for each $\tau>0$ the sublevel set $F^{-1}([0,\tau])$ 
contains a possibly smaller asymptotically flat end within $\mathcal{E}_0$. On the other hand, for each $i=1,2,3$ we have that $|\overline{\nabla} u^i|$ decays to zero along the cylindrical ends of $\overline{M}$ by Lemma \ref{harmoniclemma} $(3)$. Altogether, these observations imply that for any regular value $\tau<1$, $F^{-1}(\tau)$ separates the cylindrical ends from an asymptotically flat end within $\mathcal{E}_0$. By Proposition \ref{t:modifiedjanggraph} $(3)$, the area of any such $F^{-1}(\tau)$ is not less than the minimal area $\mathcal{A}$ required to enclose $\Sigma$. 
Combining Theorem \ref{thm-Area Hessian Inequality} and \eqref{eq:mainproofmassineq} shows that there is a choice of $\tau_0 \in(0,1)$ and a universal constant $C$ so that
\begin{equation}
\mathcal{A}\leq |F^{-1}(\tau_0)|_{\overline g}\leq C\left(\sum_{i=1}^3\int_{\overline{M}}\frac{|\overline{\nabla}^2 u^i|^2}{|\overline{\nabla} u^i|}d\overline{V}\right)^2\leq (3\cdot 24\pi)^2 C E^2,
\end{equation}
yielding the desired inequality. According to Remark \ref{remarkconstant}, the constant $C$ may be estimated to produce
$\mathcal{C}:= (3\cdot 24\pi)^2 C < 10^{18}$.

It remains to reduce the mass bound to an energy bound when $(M,g,k)$ is asymptotically flat. The following is a modification of the argument presented in the last remark of \cite{dim8PMT}*{Section 6.3}.
We will first make a reduction to the case in which $(M,g,k)$ is vacuum on the chosen asymptotically flat end $M_{end}$; let $q>\frac{1}{2}$ denote the order of asymptotically flat decay. According to Lemma \ref{l:exteriorregion}, it may be assumed that $(M,g,k)$ has an outermost apparent horizon boundary $\partial M =:\Sigma$ with respect to a single end. For each $\varepsilon>0$ and $p>3$, the density theorem of Lee-Lesourd-Unger \cite{initialdatadensity}*{Theorem 3.7} yields a perturbation $(g,k)\to(g',k')$, such that: $(g',k')$ is $\varepsilon$-close to $(g,k)$ in $W^{2,p}_{-q} \times W^{1,p}_{-q-1}$, each component of $\partial M$ remains an apparent horizon of the same type, the vacuum condition $\mu'=|J'|_{g'}=0$ is satisfied in the asymptotic end, and the ADM energy-momentum $(E',P')$ of $(g',k')$ is $\varepsilon$-close to that of $(g,k)$. Note that the proof of the main theorems in the paragraph above, generalizes with minor modifications to the situation here in which the initial data $(M,g',k')$ has apparent horizon boundary. Thus, we obtain the inequality $\mathcal{A}'\leq \mathcal{C}E'^2$ for the perturbed data set, where $\mathcal{A}'$ is the minimal area required to enclose an outermost apparent horizon $\Sigma'\subset(M,g',k')$. Furthermore, since $g$ is pointwise $\varepsilon$-close to $g'$ we must have $\mathcal{A}' \geq \mathcal{A} -o(1)$ as $\varepsilon \rightarrow 0$. 
Consequently, it suffices to prove that the mass inequality of Theorem \ref{t:main} follows from the energy inequality for initial data which are vacuum in the designated asymptotically flat end.

To complete the proof, we will show that initial data $(M,g,k)$ satisfying the hypotheses of Theorem \ref{t:main} which are vacuum in the asymptotically flat end $M_{end}$ may be smoothly deformed, while preserving $\mathcal{A}$ and the dominant energy condition, to a new data set in which the resulting ADM energy agrees with the original ADM mass. Recall that the
positive mass theorem implies $E>|P|$, unless $E=|P|=0$ and $(M,g,k)$ arises from Minkowski space. Therefore, we may assume that $E>|P|$ and fix some $\theta_0\in(\tfrac{|P|}{E},1)$. Now using that $M_{end}$ is vacuum, apply the existence result of Christodoulou-{\'O}~Murchadha \cite{Boost}*{Theorem 6.1} to obtain a vacuum spacetime developement of $(M_{end},g,k)$. This spacetime is large enough so that we may deform the asymptotically flat end of $(M,g,k)$ to a boosted slice of any slope $\theta<\theta_0$. According to \cite{Chrumass1}*{page L115}, the ADM energy-momentum $(E_{\theta},P_{\theta})$ of the boosted slice is equal to the Lorentz transformation of $(E,P)$ via a $\theta$-boost. 
By deforming $M$ within this vacuum region to a boosted slice with angle $\theta=\frac{|P|}{E}$, it follows that we obtain a new asymptotically flat initial data set with energy $E_{\theta}=\frac{E-\theta |P|}{\sqrt{1-\theta^2}}=\sqrt{E^2-|P|^2}$. Moreover, since the deformation occurs in the asymptotically flat end and within a spacetime region that may be made arbitrarily close to a portion of Minkowski space, we have that $\Sigma$ remains an outermost apparent horizon and the minimal area required to enclose it is unchanged. 
\end{proof}

\begin{proof}[Proof of Corollary \ref{c:hyp}]
Let $M_{end}$ be an end of $(M,g)$, and consider the asymptotically hyperboloidal initial data set $(M,g,k)$ where $k=g$. As explained at the end of the proof of Lemma \ref{l:exteriorregion}, the coordinate spheres within $M_{end}$ are untrapped. Therefore, the proof of \cite{AEM}*{Theorem 4.6} shows that there exists an outermost MITS with respect to this end; note that this surface $\Sigma$ satisfies $H=2$. Observe that there cannot be a MOTS surface (these satisfy $H=-2$) lying in $M_1$, the metric completion of the component of $M\setminus\Sigma$ that contains $M_{end}$, since otherwise the MOTS may be used as a barrier to construct another MITS which violates the outermost property of $\Sigma$. Thus, $\Sigma$ is an outermost apparent horizon. We next claim that $\Sigma$ is area outerminimizing. Consider the `brane action' \cite{ACG}*{Section 2} given by  
\begin{equation}
\mathcal{B}(\Sigma')=A(\Sigma')-2V(\Sigma'),
\end{equation}
$\Sigma'\subset M_1$ is a surface homologous to $\partial M_1$ with area $A(\Sigma')$, and $V(\Sigma')$ is the volume enclosed by $\Sigma'$ and $\partial M_1$. The obstacle problem, which consists of minimizing $\mathcal{B}$ over such surfaces, has a $C^{1,1/2}$ solution by \cite{HI}*{Theorem 1.3}. This solution weakly satisfies the $H=2$ condition and by standard regularity theory it must be smooth, and thus must agree with $\partial M_1 =\Sigma$ due to the outermost property. If $\Sigma$ is not area outerminizing, then there exists a competitor surface $\Sigma'$ with less area. It follows that $\mathcal{B}(\Sigma')<\mathcal{B}(\Sigma)$, which is a contradiction. We conclude that $\Sigma$ is an outermost apparent horizon, with minimal enclosing area $\mathcal{A}=A(\Sigma)$. 

Lastly, note that $(M_1,g,k)$ satisfies properties $(1)$--$(4)$ of Lemma \ref{l:exteriorregion}, and thus may serve as the generalized exterior region for $M_{end}$. To see this, we need only establish property $(1)$ concerning the second relative homology, as all other properties are clear. Remove all closed minimal surfaces from $M$, and let $M_0$ be the metric completion of the resulting component that contains $M_{end}$. Then according to \cite{HI}*{Lemma 4.1}, $M_0$ is diffeomorphic to $\mathbb{R}^3$ minus a finite number of balls, and the boundary 2-spheres form the outermost minimal surface. By a barrier argument, $M_1$ must be a proper subset of $M_0$. Moreover, since $\partial M_1$ also consists of 2-spheres \cite{Galloway}*{Theorem 3.2}, a similar argument as the one presented in the proof of Lemma \ref{l:exteriorregion} shows that $M_1$ has the same type of simple topology as $M_0$.
We may now apply without change the remaining parts of the proof of Theorem \ref{t:hyp}, 
to obtain the desired result.
\end{proof}


\appendix

\section{Companion to Section \ref{s:DSinequality}}\label{s:Appendix}

This section collects technical facts used in Sections \ref{s:DSinequality} and \ref{s:mainproof}. The following result shows that the harmonic functions on the Jang graph described in Theorem \ref{t:massinequality} satisfy the conditions required to apply Theorem \ref{thm-Area Hessian Inequality}.

\begin{lemma}\label{lem-Level Set Properties}
Let $(\overline{M},\overline{g})$ be the Jang manifold of Proposition \ref{t:modifiedjanggraph}, and let $u^i\in C^\infty(\overline{M})$, $i=1,2,3$ be the harmonic functions given by Theorem \ref{t:massinequality}. If $F$ is defined with these functions on the Jang manifold as in \eqref{e:Fdef}, then the following properties hold for any $\tau \in (0,1)$.
\begin{enumerate}
\item The restriction of $F$ to $F^{-1}((0,\tau))$ yields a proper map.
\item The set of regular values of $F$ is of full measure in $[0,\tau]$. 
\item If $0<a<b< \tau$ are regular values of $F$ then $\partial \Omega_{a,b}= F^{-1}(a) \cup F^{-1}(b)$.
\end{enumerate}
\end{lemma}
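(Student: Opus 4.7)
The plan is to verify the three properties by combining the asymptotic behavior of $F$ on each end of $\overline{M}$ with two standard tools: Sard's theorem and the implicit function theorem. Properties (2) and (3) will be essentially immediate from these general results together with the definition of a regular value, so the substantive content lies in property (1), where the decay estimates for the $u^i$ must be tracked carefully across the different types of ends of $\overline{M}$.

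For property (1), I would fix a compact set $K\subset(0,\tau)$ and choose constants with $K\subset[a,b]$ and $0<a<b<\tau<1$, and argue that $F^{-1}([a,b])$ is bounded by excluding escape to infinity along each end. On the asymptotically flat end $\mathcal{E}_0$, the functions $(u^1,u^2,u^3)$ form asymptotically flat harmonic coordinates satisfying $u^i=x^i+O_2(r^{1-\overline{q}})$, so $\langle\overline{\nabla}u^i,\overline{\nabla}u^j\rangle_{\overline{g}}\to\delta^{ij}$ and hence $F\to 0$ at infinity in $\mathcal{E}_0$; this means the condition $F\ge a>0$ excludes points near infinity there. On each cylindrical end $\mathcal{E}_l$, Lemma \ref{harmoniclemma}(3) yields $|\overline{\nabla}u^i|_{\overline{g}}=o(1)$ as $|t|\to\infty$, so $\langle\overline{\nabla}u^i,\overline{\nabla}u^j\rangle_{\overline{g}}\to 0$ and therefore $F\to\sum_{i,j}(\delta^{ij})^2=3$ at infinity; since $b<1<3$, the condition $F\le b$ excludes points near infinity there as well. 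Together with closedness of $F^{-1}([a,b])$ in the complete manifold $(\overline{M},\overline{g})$, this forces compactness, which is exactly properness of $F$ on $F^{-1}((0,\tau))$.

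Property (2) follows directly from Sard's theorem applied to $F\in C^\infty(\overline{M})$, which gives that the critical values of $F$ form a set of Lebesgue measure zero and therefore the regular values have full measure in $[0,\tau]$. For property (3), continuity of $F$ makes $\Omega_{a,b}=F^{-1}([a,b])$ closed, and the inclusion $\partial\Omega_{a,b}\subset F^{-1}(\{a,b\})$ is immediate because any neighborhood of a boundary point $p$ contains points with $F$-values outside $[a,b]$, forcing $F(p)\in\{a,b\}$. Conversely, at $p\in F^{-1}(\{a,b\})$ the regular value assumption gives $\overline{\nabla}F(p)\ne 0$, so by the implicit function theorem $F$ attains values both above and below $F(p)$ in every neighborhood of $p$, placing $p\in\partial\Omega_{a,b}$.

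The only nontrivial step is property (1), and even that is not really an obstacle once the asymptotic decay of $|\overline{\nabla}u^i|_{\overline{g}}$ on the cylindrical ends, established in Lemma \ref{harmoniclemma}(3), is invoked; the threshold $\tau<1$ in the hypothesis is chosen precisely to keep $b$ strictly below the cylindrical limiting value $3$, which is what makes the asymptotic argument work.
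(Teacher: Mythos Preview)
Your proof is correct and follows essentially the same route as the paper. The only cosmetic difference is in the cylindrical-end step of property~(1): you compute directly that $F\to 3$ there and use $b<1<3$, whereas the paper instead notes that $F\le\tau$ forces $|\overline{\nabla}u^i|\ge\sqrt{1-\sqrt{\tau}}>0$, which is incompatible with $|\overline{\nabla}u^i|\to 0$; these are equivalent observations.
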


\begin{proof}
Let $I\subset(0,\tau)$ be compact, then the preimage $F^{-1}(I)$ is closed since $F$ is continuous. Due to the asymptotics of $u^i$ we find that $F(x) \rightarrow 0$ as $|x|\to\infty$ on the end $\mathcal{E}_0$, and thus $F^{-1}(\tau)$ must lie within some coordinate sphere of $\mathcal{E}_0$. Moreover, along the asymptotically cylindrical ends $\mathcal{E}_1,...,\mathcal{E}_{l_0}$, Lemma \ref{harmoniclemma} $(3)$ implies that $|\overline{\nabla} u^i|\rightarrow 0$, $i =1,2,3$ as $|t| \rightarrow \infty$. Since $\sqrt{1-\sqrt{\tau}} \le |\overline{\nabla} u^i|$ on $\Omega_{0,\tau}$ by definition of $F$,
it follows that $F^{-1}(\tau)$ cannot extend arbitrarily far down any cylindrical end. Therefore, $F^{-1}(\tau)$ is bounded and hence compact. Likewise, if $0<a<b<\tau$ then $\Omega_{a,b}$ is compact. Since $F^{-1}(I)\subset\Omega_{a,b}$ for some $a<b$ we then have that $F^{-1}(I)$ is compact, and this establishes $(1)$. Property $(2)$ follows immediately from Sard's theorem. For property $(3)$ it is enough to observe that $\partial F^{-1}([a,b])\subset F^{-1}(\partial[a,b])$ since $\overline{M}$ has no boundary, and $F^{-1}(a)\cup F^{-1}(b)=F^{-1}(\partial[a,b])\subset \partial F^{-1}([a,b])$ since $a$, $b$ are regular values.
\end{proof}

For the remainder of this section, we leave the specific setting of the Jang graph in the preceding statement and fix the general setup described above Theorem \ref{thm-Area Hessian Inequality}, where $(M,g)$ is a complete Riemannian $3$-manifold with 3 smooth functions $u^i$, $i=1,2,3$. The following is an elementary observation about the map $U:M \rightarrow \R^3$ defined by $U=(u^1,u^2,u^3)$.

\begin{lemma}\label{lem-U Local Diffeomorphism}
If $0\le F(x)<1$ then $U$ is a local diffeomorphism near $x$.
\end{lemma}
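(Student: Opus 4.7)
The plan is to show that the differential $dU_x$ is an isomorphism at $x$, which then gives a local diffeomorphism by the inverse function theorem. Since $\dim M = 3 = \dim \mathbb{R}^3$, it suffices to verify that the gradients $\nabla u^1, \nabla u^2, \nabla u^3$ are linearly independent at $x$, or equivalently that the Gram matrix $G = (\langle \nabla u^i, \nabla u^j \rangle)_{i,j=1}^3$ is nonsingular at $x$.

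The key observation is that the definition of $F$ is exactly the squared Frobenius norm of $G - I$, so $\| G - I\|_{F}^2 = F(x) < 1$ at the point in question. Because the operator norm is dominated by the Frobenius norm, this gives $\|G - I\|_{op} < 1$, whence all eigenvalues of $G$ lie in the interval $(0,2)$ and $G$ is positive definite. In particular $\det G > 0$, so the gradients form a basis of $T_x M$ and $dU_x$ has full rank.

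The inverse function theorem then produces a neighborhood of $x$ on which $U$ is a diffeomorphism onto its image, which is the conclusion. There is no real obstacle here: the whole argument is a one-line consequence of the definition of $F$ together with the standard fact that a small Frobenius perturbation of the identity is invertible; I would just record it carefully so that Lemma \ref{lem-Jacobian Estimate} (whose statement presumably refines this invertibility into a quantitative two-sided bound on $\mathcal{J}(U) = \sqrt{\det G}$) can be invoked later without further comment.
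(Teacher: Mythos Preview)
Your proof is correct and essentially the same as the paper's: both show that $F(x)<1$ forces the Gram matrix $G=(\langle\nabla u^i,\nabla u^j\rangle)$ to be positive definite, hence nonsingular, so that $dU_x$ has full rank. The paper phrases this as a contradiction (if $\sum c_i\nabla u^i=0$ with $\sum c_i^2=1$, then $0=c^T G c\geq 1-\sqrt{F(x)}>0$), while you invoke the Frobenius-to-operator norm inequality to bound the eigenvalues of $G$; these are two packagings of the same estimate.
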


\begin{proof}
It suffices to show that the differential $dU_x$ at $x\in M$ has no kernel. This in turn is equivalent to the set of vectors
$\{\nabla u^i(x)\}_{i=1}^3$ being linearly dependent. Let us assume by way of contradiction that this last statement is false.
Then 
$\displaystyle\sum_{i=1}^3 c_i\nabla u^i=0$ at $x$ for some $c_i$ with $\sum_{i=1}^3 c_i^2=1$. At this point we then have
\begin{equation}
0=\sum_{i,j=1}^3c_ic_j\langle\nabla u^i,\nabla u^j\rangle
=\sum_{i=1}^3c_i^2+\sum_{i,j=1}^3c_ic_j \left(\langle\nabla u^i,\nabla u^j\rangle-\delta^{ij}\right)>1-\sqrt{F(x)},
\end{equation}             
which is a contradiction.
\end{proof}


The next statement compares the square root of the Gram determinant $\mathcal{J}(U):=\sqrt{|\det\langle\nabla u^i ,\nabla u^j\rangle|}$ to the values of $F$.

\begin{lemma}\label{lem-Jacobian Estimate}
If $0<b<\frac{1}{16}$ is a regular value of $F$, then the following estimates hold on $\Omega_{0,b}$ and $\partial\Omega_{0,b}$, respectively:
\begin{align}
\frac{4}{\sqrt{27}} \left(1-\sqrt{b}\right)^{\frac{3}{2}}\leq \mathcal{J}(U)\leq\left(1+\sqrt{b}\right)^{\frac{3}{2}},\qquad\qquad \mathcal{J}(U|_{\partial\Omega_{0,b}})\leq 1+\sqrt{b}     .      
\end{align}
\end{lemma}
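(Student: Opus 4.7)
The plan is to reduce both estimates to linear algebra by working with the Gram matrix $G_{ij}=\langle\nabla u^i,\nabla u^j\rangle$. Observe that $F$ is precisely the squared Hilbert--Schmidt norm $\|G-I\|_{\mathrm{HS}}^{2}$, so on $\Omega_{0,b}$ one has $\|G-I\|_{\mathrm{HS}}\leq\sqrt{b}$. Since $G$ is symmetric, each of its eigenvalues $\lambda_i$ satisfies $|\lambda_i-1|\leq\sqrt{b}$, because the operator norm of a symmetric matrix is dominated by its Hilbert--Schmidt norm. Consequently $\lambda_i\in[1-\sqrt{b},\,1+\sqrt{b}]$ for $i=1,2,3$.

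For the bulk estimate I would multiply the three eigenvalue bounds to conclude that $\det G=\lambda_1\lambda_2\lambda_3$ lies in $[(1-\sqrt{b})^{3},(1+\sqrt{b})^{3}]$. Taking square roots yields
\[
(1-\sqrt{b})^{3/2}\;\leq\;\mathcal{J}(U)\;\leq\;(1+\sqrt{b})^{3/2},
\]
and since $\tfrac{4}{\sqrt{27}}<1$ this is in particular stronger than the claimed two-sided inequality for $\mathcal{J}(U)$ on $\Omega_{0,b}$.

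For the boundary estimate I would fix an orthonormal frame $\{e_1,e_2,\nu\}$ at a point of $\partial\Omega_{0,b}$ with $\nu$ the unit normal to the level set, and set $V_{ki}=e_k(u^i)$. Then $V^{\mathsf T}V=G$, whereas the Gram matrix of the tangential gradients, namely $G'_{ab}=\sum_i e_a(u^i)\,e_b(u^i)$ for $a,b\in\{1,2\}$, is the $2\times 2$ upper-left principal minor of $VV^{\mathsf T}$. Because $VV^{\mathsf T}$ is cospectral with $V^{\mathsf T}V=G$, its eigenvalues also lie in $[1-\sqrt{b},\,1+\sqrt{b}]$. Cauchy's interlacing theorem then forces the two eigenvalues of $G'$ into the same interval, so that $\det G'\leq(1+\sqrt{b})^{2}$ and hence $\mathcal{J}(U|_{\partial\Omega_{0,b}})=\sqrt{\det G'}\leq 1+\sqrt{b}$, by Cauchy--Binet.

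The whole argument is essentially linear algebra once the identification $F=\|G-I\|_{\mathrm{HS}}^{2}$ is recorded, so I do not anticipate serious obstacles. The only step requiring minor care is the boundary bound, where one must recognize the tangential Gram matrix as a principal submatrix of a matrix cospectral with $G$ in order to apply interlacing; the hypothesis $b<\tfrac{1}{16}$ plays no essential role in my proof and only enters because the paper needs the bounds in that regime later.
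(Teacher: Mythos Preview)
Your argument is correct and in fact cleaner than the paper's. The key common step is the eigenvalue bound $|\lambda_i-1|\leq\sqrt{b}$ coming from $F=\|G-I\|_{\mathrm{HS}}^2$, which the paper also records. From there the two proofs diverge. For the lower bound on $\mathcal{J}(U)$ the paper writes out the parallelepiped volume in terms of the mutual angles of the $\nabla u^i$, bounds each cosine by $\sqrt{b}/(1-\sqrt{b})$, and estimates the resulting cubic; this is where $b<\tfrac{1}{16}$ enters and where the factor $\tfrac{4}{\sqrt{27}}$ is lost. Your direct eigenvalue product $\det G\geq(1-\sqrt{b})^3$ avoids both, producing the stronger bound $(1-\sqrt{b})^{3/2}$ and making the hypothesis $b<\tfrac{1}{16}$ superfluous for this lemma. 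For the boundary estimate the paper argues that the eigenvalues of $\langle\nabla_\partial u^i,\nabla_\partial u^j\rangle$ are dominated by those of $G$ because one passes to a projection; your route via the cospectrality of $V^{\mathsf T}V$ and $VV^{\mathsf T}$ together with Cauchy interlacing for the $2\times 2$ principal minor is an equivalent but more transparently linear-algebraic packaging of the same fact. Either way the conclusion is the same; your version simply sharpens the constant and clarifies that the smallness of $b$ is only needed downstream.
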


\begin{proof} 
On $\Omega_{0,b}$ note that, $U$ is a local diffeomorphism by Lemma \ref{lem-U Local Diffeomorphism},
and by definition of $F$ the following inequalities are satisfied
\begin{align}
\sqrt{1-\sqrt{b}} \le |\nabla u^i| \le \sqrt{1+\sqrt{b}},
\qquad\qquad\frac{|\langle \nabla u^i,\nabla u^j\rangle |}{|\nabla u^i||\nabla u^j|} \le \frac{\sqrt{b}}{1-\sqrt{b}},
\end{align}
for $i,j=1,2,3$. Moreover since $\langle \nabla u^i, \nabla u^j\rangle$ is symmetric, and positive definite on the region in question, we can apply the determinant-trace inequality to find
\begin{equation}
\mathcal{J}(U)
\le 3^{-\frac{3}{2}}\left(\mathrm{Tr}\langle \nabla u^i, \nabla u^j\rangle\right)^{\frac{3}{2}}
= 3^{-\frac{3}{2}} \left(|\nabla u^1|^2+|\nabla u^2|^2+|\nabla u^3|^2\right)^{\frac{3}{2}}
\le \left(1+\sqrt{b}\right)^{\frac{3}{2}}.
\end{equation}
Alternatively, one may observe that $F\leq b$ implies that the eigenvalues of $\langle \nabla u^i, \nabla u^j\rangle$ are bounded above by $1+\sqrt{b}$. Similarly, the same is true of the matrix $\langle \nabla_{\partial} u^i, \nabla_{\partial} u^j\rangle$, where $\nabla_{\partial}$ denotes the induced boundary gradient on $\partial\Omega_{0,b}$.
Since the eigenvalues of the restriction of a linear operator to a subspace are less than the largest eigenvalue of the original operator, the desired upper bound for $\mathcal{J}(U|_{\partial\Omega_{0,b}})$ follows.
Furthermore, to estimate the bulk determinant from below, recall that the squared volume of a parallelepiped formed by three vectors of unit length which form mutual angles $\alpha$, $\beta$, $\gamma$ is given by
\begin{equation}
\mathrm{vol}^2 =1+2\cos\alpha \cos\beta\cos\gamma -\cos^2 \alpha -\cos^2 \beta -\cos^2 \gamma.
\end{equation}
Thus, since the Gram determinant represents the squared volume of the parallelepiped formed by $\nabla u^1$, $\nabla u^2$, and $\nabla u^3$ we obtain
\begin{align}
\begin{split}
\mathcal{J}(U)^2= |\nabla u^1|^2|\nabla u^2|^2|\nabla u^3|^2
&\left(1+2\prod_{i<j}\frac{\langle \nabla u^i,\nabla u^j\rangle }{|\nabla u^i||\nabla u^j|} - \sum_{i=1}^3 \left(\frac{|\langle \nabla u^i,\nabla u^j\rangle |}{|\nabla u^i||\nabla u^j|}\right)^2 \right)
\\&\ge \left(1-\sqrt{b}\right)^{3}\left(1-2\left(\frac{\sqrt{b}}{1-\sqrt{b}} \right)^3-3\left(\frac{\sqrt{b}}{1-\sqrt{b}} \right)^2 \right) 
\\&\ge \frac{16}{27} \left(1-\sqrt{b}\right)^{3},
\end{split}
\end{align}
when $b<\frac{1}{16}$.
\end{proof}

Now consider the function $\chi: \R^3 \rightarrow \R$ that appears in Section \ref{s:DSinequality} and is defined by
\begin{align}
\chi(y) = \mathcal{H}^0(U^{-1}(y) \cap \Omega_{a,b}), \quad 0\leq a<b< \frac{1}{16},
\end{align}
where $a$ and $b$ are regular values of $F$.
The next two lemmas establish important properties of $\chi$ which are used to carry out the arguments of Lemma \ref{lem-BV Isoperimetric Inequality}.

\begin{lemma}\label{lem:properties_of_index_function}
Let $\Omega$ be an open submanifold of $M$ with smooth boundary and compact closure $\bar{\Omega}$. Let $\Psi:M \rightarrow\mathbb{R}^{3}$ be a smooth map which is a local diffeomorphism about points in $\bar{\Omega}$, and set $\Psi^{-1}_{\bar{\Omega}}(y)=\Psi^{-1}(y)\cap\bar{\Omega}$.
Then the function $\chi(y)=\mathcal{H}^{0}\left(\Psi^{-1}_{\bar{\Omega}}(y)\right)$ on $\mathbb{R}^3$ satisfies the following properties.
\begin{enumerate}
\item $\chi$ only takes values in the natural numbers $\N$.
\item $\chi$ is upper semicontinuous.
\item The discontinuities of $\chi$ lie entirely within $\Psi({\partial}\Omega)$.
\end{enumerate}   
\end{lemma}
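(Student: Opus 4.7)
My plan is to deduce all three properties from the local diffeomorphism hypothesis together with the compactness of $\bar{\Omega}$, by a single neighborhood-counting argument refined according to whether the preimage meets $\partial\Omega$.

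For property (1), I would first observe that since $\Psi$ is a local diffeomorphism about each point of $\bar{\Omega}$, the preimage $\Psi^{-1}_{\bar{\Omega}}(y)$ is a discrete subset of $\bar{\Omega}$: about each preimage point $x$, there is an open neighborhood on which $\Psi$ is injective, so $x$ is isolated in $\Psi^{-1}(y)\cap\bar{\Omega}$. A discrete subset of the compact set $\bar{\Omega}$ is necessarily finite, and hence $\chi(y)\in\mathbb{N}$.

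For property (2), fix $y_0\in\mathbb{R}^3$, write $\chi(y_0)=n$, and list the preimages as $x_1,\dots,x_n$. Using the local diffeomorphism hypothesis, I would select pairwise-disjoint open neighborhoods $U_i\subset M$ of $x_i$ such that $\Psi|_{U_i}$ is a diffeomorphism onto the open set $\Psi(U_i)\subset\mathbb{R}^3$. The set $K=\bar{\Omega}\setminus\bigcup_{i=1}^n U_i$ is then a compact subset of $\bar{\Omega}$ disjoint from $\Psi^{-1}(y_0)$, so $\Psi(K)$ is compact and does not contain $y_0$. Consequently
\begin{equation}
V:=\bigl(\mathbb{R}^3\setminus\Psi(K)\bigr)\cap\bigcap_{i=1}^n\Psi(U_i)
\end{equation}
is an open neighborhood of $y_0$. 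For every $y\in V$, any $x\in\Psi^{-1}(y)\cap\bar{\Omega}$ must lie in some $U_i$, and the injectivity of $\Psi|_{U_i}$ forces at most one preimage per $U_i$. Therefore $\chi(y)\leq n=\chi(y_0)$ on $V$, establishing upper semicontinuity.

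For property (3), I would take $y_0\notin\Psi(\partial\Omega)$, so that each preimage point $x_i$ lies in the open set $\Omega$. I now repeat the construction of the neighborhoods $U_i$ from the previous paragraph, but shrink each so that $U_i\subset\Omega$. Then for each $y\in V$ with $V$ defined exactly as above, the set $\Psi(U_i)$ still contains $y$, so $\Psi|_{U_i}$ being a diffeomorphism produces exactly one preimage of $y$ inside $U_i\subset\Omega\subset\bar{\Omega}$. Combined with the bound $\chi(y)\leq n$ from the previous step, this gives $\chi(y)=n=\chi(y_0)$ throughout $V$, which shows that $\chi$ is locally constant, and therefore continuous, at every point outside $\Psi(\partial\Omega)$.

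There is no real obstacle in this proof; the main subtlety is the bookkeeping in step (2), namely the separation of the complement $K=\bar{\Omega}\setminus\bigcup U_i$ from $y_0$, which is what allows the discrete count to behave semicontinuously even though $\bar{\Omega}$ has a boundary. In step (3), this same argument becomes exact precisely because $y_0$ is assumed to avoid the image of the boundary, so no preimage points can appear or disappear along $\partial\Omega$ under small perturbations.
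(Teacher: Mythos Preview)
Your proposal is correct and follows essentially the same neighborhood-counting strategy as the paper: finiteness from discreteness plus compactness, upper semicontinuity from pairwise-disjoint diffeomorphism charts about the preimage points, and continuity off $\Psi(\partial\Omega)$ by shrinking those charts into $\Omega$. The one presentational difference is that you work with an explicit open set $V=(\mathbb{R}^3\setminus\Psi(K))\cap\bigcap_i\Psi(U_i)$ and the compact complement $K=\bar{\Omega}\setminus\bigcup_i U_i$, whereas the paper argues sequentially with a pigeonhole step; your formulation makes the ``no stray preimages outside $\bigcup_i U_i$'' point cleaner and more explicit.
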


\begin{proof}
Due to the assumption that $\Psi$ is a local diffeomorphism about points in $\bar \Omega$, as a subspace $\Psi^{-1}_{\bar{\Omega}}(y)$ has the discrete topology for any $y\in\Psi(\bar{\Omega})$.
Furthermore, $\Psi^{-1}_{\bar{\Omega}}(y)$ is closed in $\bar{\Omega}$ and is therefore compact. Hence, $\chi(y)<\infty$ and this establishes $(1)$. 

To prove $(2)$, first note that $\chi\equiv 0$ on $\mathbb{R}^3 \setminus \Psi(\bar{\Omega})$ and thus we will restrict attention to $\Psi(\bar{\Omega})$. Now let $\{y_{i}\}_{i=1}^\infty$ be a sequence of points in $\Psi(\bar{\Omega})$ converging to some $y_{0}\in\Psi(\bar{\Omega})$. We may find pairwise disjoint neighborhoods $\displaystyle \left\{V_{l}(y_{0})\right\}_{l=1}^{\chi(y_{0})}$ in $M$ which cover $\Psi^{-1}_{\bar{\Omega}}(y_{0})$, and such that $\left.\Psi\right|_{V_{l}(y_{0})}$ is a diffeomorphism onto its image. Setting
\begin{align}
W(y_{0})=\bigcap_{l=1}^{\chi(y_{0})}\Psi(V_{l}(y_{0})),   
\end{align} 
and upon shrinking $V_{l}(y_{0})$ if necessary we may assume that $\Psi(V_{l}(y_{0}))=W(y_{0})$ for all $l$. 
Since $\Phi(\cup_{l=1}^{\chi(y_{0})}V_{l}(y_{0}))$ is open 
and contains $y_{0}$, it contains $y_i$ for all large $i$.  
If $\chi(y_{i})>\chi(y_{0})$ for some such $i$, then the pigeonhole principle provides an $l_0$ such that $V_{l_0}(y_{0})$ contains two points of $\Psi^{-1}(y_{i})$, contradicting the fact that $\Psi$ is a diffeomorphism on $V_{l_0}(y_{0})$ and establishing (2).

To prove $(3)$, note as above that $\chi$ is trivially continuous on the compliment of $\Psi(\bar{\Omega})$, so that we may restrict attention to $\Psi(\bar{\Omega})$ itself. Let $y_{0}$ be any point in $\Psi(\bar{\Omega})$ such that $\Psi^{-1}_{\bar{\Omega}}(y_{0})\cap\partial\Omega=\emptyset$. We will show that $\chi$ is continuous at $y_0$. As before, there exist pairwise disjoint neighborhoods $\displaystyle \left\{V_{l}(y_{0})\right\}_{l=1}^{\chi(y_{0})}$ in $M$ which cover $\Psi^{-1}_{\bar{\Omega}}(y_{0})$, and are such that $\left.\Psi\right|_{V_{l}(y_{0})}$ is a diffeomorphism onto a common image $W(y_0)$.
The assumption  $\Psi^{-1}_{\bar{\Omega}}(y_{0})\cap\partial\Omega=\emptyset$ implies that $\Psi^{-1}_{\bar{\Omega}}(y_{0})\subset\Omega$.
Therefore, by setting $\widetilde{V}_{l}(y_{0})=V_{l}(y_{0})\cap\Omega$ we obtain open subsets of $M$ contained in $\Omega$ such that $\Psi(\widetilde{V}_{l}(y_{0}))$ is an open neighborhood of $y_{0}$.
Let $\{y_{i}\}_{i=1}^\infty$ be a sequence of points in $\Psi(\Omega)$ converging to $y_{0}$.
Notice that $y_{i}\subset W(y_0)\cap\Psi(\Omega)$ for all large $i$, so in each $\widetilde{V}_{l}(y_{0})$ we may find a point $x_i^l$ such that $\Psi(x_i^l)=y_{i}$. Furthermore, since $\widetilde{V}_{l}(y_{0})\subset\Omega$ it follows that $x_i^l\in\Omega$.
This shows that $\chi(y_{i})\geq\chi(y_{0})$ for all sufficiently large $i$.
On the other hand, property $(2)$ implies that $\chi(y_{i})\leq\chi(y_{0})$ for all large $i$. Therefore equality is achieved, showing that $\chi$ is continuous at $y_{0}$. 
\end{proof}

\begin{lemma}\label{lem:index_is_bv}
The function $\chi$ in Lemma \ref{lem:properties_of_index_function} is BV.
\end{lemma}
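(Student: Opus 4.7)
The plan is to decompose $\chi$ as a finite sum of indicator functions of sets of finite perimeter in $\mathbb{R}^3$, so that the total variation of $\chi$ is bounded via subadditivity. The essential inputs will be the structural properties of $\chi$ already established in Lemma \ref{lem:properties_of_index_function}, together with Federer's criterion from classical BV theory.

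First I would verify that $\chi$ is uniformly bounded by some integer $N$. Since $\bar\Omega$ is compact and $\Psi$ is a local diffeomorphism at every point of $\bar\Omega$, a finite open cover of $\bar\Omega$ by neighborhoods on each of which $\Psi$ is a diffeomorphism provides such a bound. Moreover, the support of $\chi$ lies within the compact set $\Psi(\bar\Omega)$, so $\chi \in L^1(\mathbb{R}^3)$.

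The main step will be to show that each upper level set $E_n := \{\chi \geq n\}$, for $1 \leq n \leq N$, has finite perimeter in $\mathbb{R}^3$. By the upper semicontinuity of $\chi$ established in Lemma \ref{lem:properties_of_index_function}$(2)$, each $E_n$ is closed. Since any continuous integer-valued function is locally constant, every point of continuity of $\chi$ must lie in the topological interior of $E_n$ or of its complement. Hence the topological boundary $\partial E_n$ will be contained in the discontinuity set of $\chi$, which by Lemma \ref{lem:properties_of_index_function}$(3)$ is contained in $\Psi(\partial\Omega)$. The area formula will then yield
\begin{equation*}
\mathcal{H}^2(\partial E_n) \leq \mathcal{H}^2(\Psi(\partial\Omega)) \leq \int_{\partial\Omega} \mathcal{J}(\Psi|_{\partial\Omega})\, dA_g < \infty,
\end{equation*}
where finiteness follows from the compactness of $\partial\Omega$ and the continuity of the Gram determinant density. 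An application of Federer's criterion (see \cite{EG}) then shows that each $E_n$ is a set of finite perimeter.

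The main obstacle, namely the inclusion $\partial E_n \subset \Psi(\partial\Omega)$, will be handled by combining upper semicontinuity with the characterization of the discontinuity set already obtained in Lemma \ref{lem:properties_of_index_function}. Once this structural point is secured, the remainder is a routine application of standard BV theory, and the proof will be completed by writing $\chi = \sum_{n=1}^N \mathbf{1}_{E_n}$ and invoking subadditivity of the total variation.
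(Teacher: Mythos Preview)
Your proposal is correct and follows essentially the same route as the paper: both arguments decompose $\chi=\sum_{n\geq 1}\mathbf{1}_{\{\chi\geq n\}}$, use Lemma~\ref{lem:properties_of_index_function} to show $\partial\{\chi\geq n\}\subset\Psi(\partial\Omega)$, bound $\mathcal{H}^2(\Psi(\partial\Omega))$ via the area formula, and invoke Federer's criterion (\cite{EG}*{Theorem 5.23}) to conclude each level set has finite perimeter. The only cosmetic difference is that the paper finishes by directly estimating $\int_{\mathbb{R}^3}\chi\,\mathrm{div}\,\pmb{\phi}\,dV_\delta$, whereas you invoke subadditivity of total variation; both are equivalent.
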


\begin{proof}
Recall that an $L^1(\mathbb{R}^3)$ function $f$ is BV if
\begin{align}
\mathrm{sup} \left\{\int_{\R^3}f \mathrm{div}\pmb{\phi} \;dV_{\delta} \Bigm| \pmb{\phi}\in C^{{\infty}}_{c}(\R^{3},\R^{3}), \|\pmb{\phi}\|_{L^\infty}\leq 1 \right\} < \infty.
\end{align}
As such, let $\pmb{\phi}\in C^{{\infty}}_{c}(\R^{3},\R^{3})$ be a vector field satisfying $\|\pmb{\phi}\|_{{L^\infty}}\leq1$ and compute
\begin{equation}\label{eq-Important Rewrite}
\int_{\R^{3}}\chi\mathrm{div}\pmb{\phi} \;dV_{\delta}=\int_{\R^{3}}\sum_{n=1}^{\infty}1_{\{\chi\geq n\}}\mathrm{div}\pmb{\phi} \;dV_{\delta},
\end{equation}
where $1_V$ is the indicator function for a set $V \subset \R^3$. Note that the above sum is finite since $\chi$ is bounded. By Lemma \ref{lem:properties_of_index_function} the points of discontinuity for $\chi$ are contained in $\Psi(\partial\Omega)$, so we have that $\partial\{\chi\geq n\}\subset \Psi(\partial\Omega)\cap\{\chi\geq n\}$.
This implies that $\partial\{\chi\geq n\}$ 
has finite $\mathcal{H}^{2}$-measure.
In particular, as the measure theoretic boundary of a set is always contained in its topological boundary, $\{\chi\geq n\}$ satisfies the hypotheses of \cite{EG}*{Theorem 5.23} showing that this set has finite perimeter. Hence, $1_{\{\chi\geq n\}}$ is BV according to \cite{EG}*{Definition 5.1}.
Therefore we find that
\begin{align}
\begin{split}
\int_{\R^{3}}\sum_{n=1}^{\infty}1_{\{\chi\geq n\}}\mathrm{div}\pmb{\phi} \;dV_{\delta}&=\sum_{n=1}^{\infty}\int_{\partial\{\chi\ge n\}}\langle\pmb{\phi},\nu\rangle d\mathcal{H}^{2}
\\
&\leq\sum_{n=1}^{\infty}\int_{\Psi(\partial\Omega)}1_{\{\chi\geq n\}}dA_{\delta} \label{eq-Properties of Omega}
\\&=\int_{\Psi(\partial\Omega)}\chi dA_{\delta} 
<\infty,
\end{split}
\end{align}
where $\nu$ is the measure theoretic unit outer normal, the first equality is obtained from \cite{EG}*{Theorem 5.16}, the second line follows from the fact that $|\langle \pmb{\phi}, \nu \rangle| \le |\pmb{\phi}| |\nu| \le 1$ holds almost everywhere and $\partial\{\chi\geq n\}\subset\Psi(\partial\Omega)\cap\{\chi\geq n\}$, and the third line holds by definition of $\chi$.
\end{proof}


\begin{thebibliography}{99}

\bibitem{Mazzierietal} V. Agostiniani, C. Mantegazza, L. Mazzieri, and F. Oronzio, \textit{The Riemannian Penrose inequality via nonlinear potential theory}, preprint 2022. arXiv:2205.11642

\bibitem{AHK} A. Alaee, P.-K. Hung, and M. Khuri, \textit{The positive energy theorem for asymptotically hyperboloidal initial data sets with toroidal infinity and related rigidity results}, Comm. Math. Phys., \textbf{396} (2022), no. 2, 
451--480.


\bibitem{LA} L. Ambrozio, 
\textit{On perturbations of the Schwarzschild anti-de Sitter spaces of positive mass}, Comm. Math. Phys., \textbf{337} (2015), no.2, 767--783.

\bibitem{ACG} L. Andersson, M. Cai, and G. Galloway, \textit{Rigidity and positivity of mass for asymptotically hyperbolic manifolds}, Ann. Henri Poincar\'{e}, \textbf{9} (2008), no.~1, 1--33.


\bibitem{AnderssonDahl+} L. Andersson, M. Dahl, G. Galloway, and D. Pollack, \textit{On the geometry and topology of initial data sets with horizons}, Asian J. Math., \textbf{22} (2018), no. 5, 863--881.

\bibitem{AEM} L. Andersson, M. Eichmair, and J. Metzger, \textit{Jang's equation and its applications to marginally trapped surfaces}, in {\it Complex analysis and dynamical systems IV. Part 2}, 13--45, Contemp. Math. Israel Math. Conf. Proc., \textbf{554}, Amer. Math. Soc., Providence, RI.

\bibitem{AnderssonMetzger}  L. Andersson, and J. Metzger, \textit{The area of horizons and the trapped region},
Comm. Math. Phys., \textbf{290} (2009), no. 3, 941--972.

\bibitem{Bartnik} R. Bartnik, \textit{The mass of an asymptotically flat manifold}, Comm. Pure Appl. Math., \textbf{39} (1986), no. 5, 661--693.

\bibitem{Bray}
H. Bray, \textit{Proof of the Riemannian Penrose inequality using the positive mass theorem}, J. Differential. Geom., \textbf{59} (2001), no.2, 177--267.

\bibitem{BrayChrusciel}
H. Bray, and P. Chru\'{s}ciel, \textit{The Penrose inequality}, Birkh\"{a}user Verlag, Basel, 2004, 39--70.

\bibitem{Bray:2009au}
H. Bray, and M. Khuri, \textit{A Jang Equation Approach to the Penrose Inequality},
Discrete Contin. Dyn. Syst., \textbf{27} (2010), no. 2, 741--766.


\bibitem{BKKS} H. Bray, M. Khuri, D. Kazaras, and D. Stern, \textit{Harmonic functions and the mass of 3-dimensional asymptotically flat Riemannian manifolds}, J. Geom. Anal., \textbf{32} (2022), no.~6, Paper No. 184, 29 pp.

\bibitem{Bray:2007opu}
H. Bray, and D. Lee, \textit{On the Riemannian Penrose inequality in dimensions less than 8},
Duke Math. J., \textbf{148} (2009), 81--106.


\bibitem{BKS} E. Bryden, M. Khuri, and C. Sormani, \textit{Stability of the spacetime positive mass theorem in spherical symmetry}, J. Geom. Anal., \textbf{31} (2021), no. 4, 4191--4239.

\bibitem{Boost} D. Christodoulou, and N. \'{O}~Murchadha, \textit{The boost problem in general relativity}, Comm. Math. Phys., {\bf 80} (1981), no.~2, 271--300.

\bibitem{Chrusciel}  P. Chru\'{s}ciel, \textit{Boundary conditions at spatial infinity from a Hamiltonian point of view}, Topological Properties and Global Structure of Space-Time (P. Bergmann and V. de Sabbata, eds.), Plenum Press, New York, 1986, pp. 49--59.

\bibitem{Chrumass1} P. Chru\'sciel, \textit{A remark on the positive-energy theorem}, Classical Quantum Gravity, {\bf 3} (1986), no.~6, {\rm L}115--{\rm L}121. Corrigendum: Classical Quantum Gravity, {\bf 4} (1987), no.~4, 1049.

\bibitem{CJL} P. Chru\'sciel, J. Jezierski, and S. {\L}\c{e}ski, \textit{The Trautman-Bondi mass of hyperboloidal initial data sets}, Adv. Theor. Math. Phys., \textbf{8} (2004), no. 1, 83--139.

\bibitem{DGS} M. Dahl, R. Gicquaud, and A. Sakovich, \textit{Penrose type inequalities for asymptotically hyperbolic graphs}, Ann. Henri Poincar\'{e}, {\bf 14} (2013), no.~5, 1135--1168.

\bibitem{DahlSak} M. Dahl, and A. Sakovich, \textit{A density theorem for asymptotically hyperbolic initial data satisfying the dominant energy condition}, Pure Appl. Math. Q., {\bf 17} (2021), no.~5, 1669--1710.

\bibitem{GL} L. de Lima, and F. Gir\~{a}o, \textit{Positive mass and Penrose type inequalities for asymptotically hyperbolic hypersurfaces}, Gen. Relativity Gravitation, \textbf{47} (2015), no. 3, Art. 23, 20 pp.

\bibitem{LG} L. de Lima, and F. Gir\~{a}o, \textit{An Alexandrov-Fenchel-type inequality in hyperbolic space with an application to a Penrose inequality}, Ann. Henri Poincar\'{e}, {\bf 17} (2016), no.~4, 979--1002.

\bibitem{Dong-PMT_Stability}
C. Dong, \textit{Some stability results of positive mass theorem for uniformly
asymptotically flat 3-manifolds}, Ann. Math. Qu\'e .\textbf{48} (2024) no. 2, 427--451.

\bibitem{Dong-Song} C. Dong, and A. Song, \textit{Stability of Euclidean 3-space for the positive mass theorem}, Invent. Math., {\bf 239} (2025), no.~1, 287--319.

\bibitem{Eichmair} M. Eichmair, \textit{Existence, regularity, and properties of generalized apparent horizons}, Comm. Math. Phys., \textbf{294} (2010), no. 3, 745--760.

\bibitem{topologicalcensorship} M. Eichmair, G. Galloway, and D. Pollack, \textit{Topological censorship from the initial data point of view}, J. Differential Geom., {\bf 95} (2013), no.~3, 389--405.

\bibitem{dim8PMT} M. Eichmair, L.-H. Huang, D. Lee, and R. Schoen, 
\textit{The spacetime positive mass theorem in dimensions less than eight}, J. Eur. Math. Soc., \textbf{18} (2016), no. 1, 83--121.

\bibitem{EG} L. Evans, and R. Gariepy, {\it Measure theory and fine properties of functions}, revised edition, Textbooks in Mathematics, CRC, Boca Raton, FL, 2015.

\bibitem{Galloway} G. Galloway, \textit{Rigidity of outermost MOTS: the initial data version},
Gen. Relativity Gravitation, \textbf{50} (2018), no. 3, Paper No. 32, 7 pp.


\bibitem{GWW} Y. Ge, G. Wang, J. Wu, and C. Xia,
\textit{A Penrose inequality for graphs over Kottler space},
Calc. Var. Partial Differential Equations, \textbf{52} (2015), no. 3-4, 755--782.



\bibitem{HanKhuri} Q. Han, and M. Khuri, \textit{Existence and blow-up behavior for solutions of the generalized Jang equation}, Comm. Partial Differential Equations, \textbf{38} (2013), 2199--2237.

\bibitem{Hardt} R. Hardt, M. Homann-Ostenhof, T. Homann-Ostenhof, and N. Nadirashvili, \textit{Critical sets of solutions to elliptic equations}, J. Differential Geom., \textbf{51} (1999), no. 2, 359--373.

\bibitem{Hayward} S. Hayward, \textit{Gravitational energy in spherical symmetry}, Phys. Rev. D, \textbf{53}
(1996), no. 4, 1938--1949.

\bibitem{Herzlich} M. Herzlich, \textit{A Penrose-like inequality for the mass of Riemannian asymptotically flat manifolds},
Comm. Math. Phys., \textbf{188} (1997), no. 1, 121--133.

\bibitem{HKK} S. Hirsch, D. Kazaras, and M. Khuri, \textit{Spacetime harmonic functions and the mass of 3-dimensional asymptotically flat initial data for the Einstein equations}, J. Differential Geom., {\bf 122} (2022), no.~2, 223--258.

\bibitem{MHou} M. Hou, \textit{Spacetime Penrose Inequality For Asymptotically Hyperbolic Spherical Symmetric Initial Data}, MSc. Thesis, Uppsala University (2020).

\bibitem{HI}
G. Huisken, and T. Ilmanen,
\textit{The inverse mean curvature flow and the Riemannian Penrose inequality}, J. Differential Geom., \textbf{59} (2001), no. 3, 353--437.

\bibitem{Jeff-Capacity} J. Jauregui,
\textit{The capacity-volume inequality of Poincar\'{e}-Faber-Szeg\"{o}.},
Notes http://www. math. union. edu/~jaureguj/capacity volume, 2012.

\bibitem{Khuri} M. Khuri, \textit{A Penrose-like inequality for general initial data sets}, Comm. Math. Phys., 290 (2009), 
779--788. Erratum: Comm. Math. Phys., \textbf{317} (2013), no. 1, 267.

\bibitem{Khuri1} M. Khuri, \textit{A Penrose-like inequality with charge}, Gen. Relativity Gravitation, \textbf{45} (2013), no. 11, 2341--2361.

\bibitem{KhuriKopinski} M. Khuri, and J. Kopi\'{n}ski, \textit{Asymptotically hyperbolic Einstein constraint equations with apparent horizon boundary and the Penrose inequality for perturbations of Schwarzschild-AdS}, Classical Quantum Gravity, \textbf{40} (2023), no. 4, Paper No. 045007, 25 pp.

\bibitem{KhuriKunduri} M. Khuri, and H. Kunduri, \textit{The spacetime Penrose inequality for cohomogeneity one initial data}, Adv. Theor. Math. Phys., to appear. arXiv:2404.13247
 
\bibitem{Leetext}
D. Lee, \textit{Geometric Relativity},
Grad. Stud. Math., \textbf{201}, American Mathematical Society, Providence, RI, 2019.

\bibitem{initialdatadensity} D. Lee, M. Lesourd, and R. Unger, \textit{Density and positive mass theorems for initial data sets with boundary}, Comm. Math. Phys., {\bf 395} (2022), no.~2, 643--677.

\bibitem{LeeNeves} D. Lee, and A. Neves, \textit{The Penrose inequality for asymptotically locally hyperbolic spaces with nonpositive mass}, Comm. Math. Phys., {\bf 339} (2015), no.~2, 327--352.

\bibitem{MM} E. Malec, and N. \'{O}~Murchadha, \textit{Trapped surfaces and the Penrose inequality
in spherically symmetric geometries}, Phys. Rev. D, \textbf{49} (1994), no. 12, 6931--6934.

\bibitem{Mars} M. Mars, \textit{Present status of the Penrose inequality},
Classical Quantum Gravity, \textbf{26} (2009), no. 19, 193001, 59 pp.

\bibitem{Metzger} J. Metzger, \textit{Blowup of Jang's equation at outermost marginally trapped surfaces}, Comm. Math. Phys., {\bf 294} (2010), no.~1, 61--72.

\bibitem{McCormickPengzi} S. McCormick, and P. Miao, \textit{On a Penrose-like inequality in dimensions less than eight}, Int. Math. Res. Not. IMRN, {\bf 2019}, no.~7, 2069--2084.

\bibitem{Michel} B. Michel, \textit{Geometric invariance of mass-like asymptotic invariants}, J. Math. Phys., \textbf{52} (2011), no. 5, 052504.

\bibitem{Nat} I. Natanson, \textit{Theory of Functions of a Real Variable}, translated by Leo F. Boron with the collaboration of Edwin Hewitt, Ungar Publishing, New York, 1955.

\bibitem{Neves} A. Neves, \textit{Insufficient convergence of inverse mean curvature flow on asymptotically hyperbolic manifolds}, J. Differential Geom., {\bf 84} (2010), no.~1, 191--229.


\bibitem{Penrose} R. Penrose, \textit{Gravitational collapse: the role of general relativity}, Nuovo Cimento, Rivista Serie., \textbf{1} (1969), 252--276.

\bibitem{Penrose1} R. Penrose, \textit{Naked singularities}, Ann. N. Y. Acad. Sci., \textbf{224} (1973), 
125--134.

\bibitem{PolyaSzego} G. P\'olya, and G. Szeg\"{o}, {\it Isoperimetric Inequalities in Mathematical Physics}, Annals of Mathematics Studies, No. 27, Princeton Univ. Press, Princeton, NJ, 1951.

\bibitem{Roesch} H. Roesch, \textit{Proof of a null Penrose conjecture using a new quasi-local mass}, Comm. Anal. Geom., {\bf 29} (2021), no.~8, 1847--1915.

\bibitem{Sakovich} A. Sakovich, \textit{The Jang equation and the positive mass theorem in the asymptotically hyperbolic setting}, Comm. Math. Phys. \textbf{386} (2021), no. 2, 903--973.

\bibitem{SchoenYau} R. Schoen, and S.-T. Yau, \textit{Proof of the positive mass theorem II}, Comm. Math. Phys., \textbf{79} (1981), 231--260.


\bibitem{wang} X. Wang, \textit{The mass of asymptotically hyperbolic manifolds}, J. Differential Geom., \textbf{57} (2001), no. 2, 273--299.

\bibitem{Yu} W. Yu, \textit{Blowup rate control for solution of Jang's equation and its application on Penrose inequality},
Adv. Theor. Math. Phys., \textit{26} (2022), no. 7, 2313--2377.

\bibitem{Zhao} K.-W. Zhao, \textit{On Blowup of Jang's Equation and Constant Expansion Surfaces}, Dissertation, UC Irvine, 2022.



\end{thebibliography}
\end{document}